\theoremstyle{plain}
\newtheorem{lemma}{Lemma}[section]
\newtheorem{proposition}[lemma]{Proposition}
\newtheorem{thm}[lemma]{Theorem}
\newtheorem{theorem}[lemma]{Theorem}
\newtheorem{Theorem}{Theorem}
\newtheorem{corollary}[lemma]{Corollary}
\newtheorem{Corollary}[Theorem]{Corollary}
\newtheorem*{surjective lemma}{Lemma~\ref{lemma:surjective}}
\numberwithin{equation}{section}
\theoremstyle{remark}
\theoremstyle{definition}
\newtheorem{remark}[lemma]{Remark}
\newtheorem{definition}[lemma]{Definition}
\newtheorem{Definition}[Theorem]{Definition}
\newtheorem*{ack}{Acknowledgements}
\DeclareMathOperator{\Div}{div}
\newcommand{\too}{\longrightarrow}
\begin{document}
\title[Equality in the Spacetime Positive Mass Theorem]{Equality in the Spacetime Positive Mass Theorem}
\author{Lan-Hsuan Huang}
\address{Department of Mathematics, University of Connecticut, Storrs, CT 06269, USA}
\email{lan-hsuan.huang@uconn.edu}
\author{Dan A. Lee}
\address{CUNY Graduate Center and Queens College}
\email{dan.lee@qc.cuny.edu}
\thanks{The first author was partially supported by the NSF Career Award DMS~1452477, National Center for Theoretical Sciences (NCTS) in Taiwan, Simons Fellowship of the Simons Foundation, and   von Neumann Fellowship at the Institute for Advanced Study.}

\begin{abstract}
We affirm the rigidity conjecture of the spacetime positive mass theorem in dimensions less than eight. Namely, if an asymptotically flat initial data set satisfies the dominant energy condition and has $E=|P|$,  then  $E=|P|=0$,  where $(E, P)$ is the ADM energy-momentum vector. The dimensional restriction can be removed if we assume the positive mass inequality holds.  Previously the result was only known for spin manifolds~\cite{Beig-Chrusciel:1996, Chrusciel-Maerten:2006}.
\end{abstract}

\maketitle

\section{Introduction}\label{section:introduction}

Our main result is the following theorem that affirms the rigidity conjecture of the spacetime positive mass theorem (see \cite[p. 398]{Witten:1981}, also \cite[p. 84]{Eichmair-Huang-Lee-Schoen:2016} and the references therein). We refer to Section~\ref{section:prelim} for precise statements of terms used below.

\begin{Theorem}\label{theorem:main1}
Let $3\le n \le 7$. Let $(M, g, k)$ be an $n$-dimensional asymptotically flat initial data set that satisfies  the dominant energy condition and has $E=|P|$, where $(E, P)$ is the ADM energy-momentum vector. Then $E=|P|=0$.
\end{Theorem}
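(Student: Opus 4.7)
The plan is a proof by contradiction. Suppose $(M, g, k)$ is an asymptotically flat DEC initial data set with $E = |P|$ and $E > 0$. I would try to construct a smooth family $(g_s, k_s)$ of asymptotically flat DEC initial data sets with $(g_0, k_0) = (g, k)$ satisfying
$$ \left. \frac{d}{ds}\right|_{s = 0^+} (E_s - |P_s|) < 0. $$
This would contradict the spacetime positive mass inequality $E_s \ge |P_s|$, which is available in the dimensional range $3 \le n \le 7$ by Eichmair--Huang--Lee--Schoen, and therefore force $E = |P| = 0$.

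Before constructing the deformation, I would reduce to data with nice asymptotics. A density theorem for DEC initial data should allow me to approximate $(g, k)$ by data with harmonic (or Schwarzschild) asymptotics and with strict interior DEC $\mu - |J|_g > 0$ on a prescribed compact region, without changing $(E, P)$ by more than an arbitrarily small amount. Under such asymptotics the ADM vector is smooth as a function of the data, and the linearized constraint operator is Fredholm in suitable weighted Sobolev spaces, with finite-dimensional cokernel spanned by the asymptotic translation KIDs. The Regge--Teitelboim identity then expresses the variation of $(E, P)$ under a compactly supported perturbation $(h, w)$ as a bulk integral of $(\dot \mu, \dot J)$ paired with these KIDs. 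Since $E = |P| > 0$, the unit vector $\hat P = P/|P|$ is well-defined, and the infinitesimal change $\dot E - \hat P \cdot \dot P$ can be written as a bulk integral of $\dot \mu - \hat P \cdot \dot J$; my aim is to arrange $(h, w)$ so that this integral is strictly negative while the infinitesimal DEC inequality $\dot \mu - \hat P \cdot \dot J \ge 0$ (or the stronger $\dot \mu - |\dot J|_g \ge 0$ on the strict-DEC region) is respected.

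The main technical obstacle is to upgrade this infinitesimal construction to a genuine nonlinear family that preserves DEC as an inequality, not merely infinitesimally. Here I would use a modified constraint operator in the spirit of Corvino--Schoen, but adapted to the inequality $\mu \ge |J|$ rather than the vacuum equalities. The strict DEC on the compact region, secured by the density reduction, provides a buffer that allows one to absorb second-order error terms from an implicit-function-theorem argument, so the nonlinear DEC condition is retained along the curve $(g_s, k_s)$. Combining this nonlinear correction with the infinitesimal direction chosen above produces the required family; then differentiating $E_s \ge |P_s|$ at $s = 0$ yields the contradiction. The hard part, where I expect most of the work to lie, is the analytic framework handling inequalities in weighted Sobolev spaces: projecting away from the KID cokernel while controlling the sign of $\mu - |J|$ globally is considerably more delicate than the equality-based constructions in the vacuum Corvino--Schoen setting, and it is what has prevented earlier proofs from removing the spin hypothesis.
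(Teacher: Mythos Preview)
Your plan has the right cast of characters (the Regge--Teitelboim variational identity, the modified constraint operator, DEC-preserving deformations) but the logic is inverted, and the central step cannot succeed.

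You aim to build a DEC family $(g_s,k_s)$ through $(g,k)$ with $\left.\tfrac{d}{ds}\right|_{0^+}(E_s-|P_s|)<0$. But the positive mass inequality itself forbids this: since every DEC data set has $E\ge |P|$ and $(g,k)$ already sits on the boundary $E=|P|$, the function $s\mapsto E_s-|P_s|$ attains its minimum at $s=0$, so its one-sided derivative is $\ge 0$ for \emph{every} admissible deformation. This holds whether or not $E>0$; the assumption $E>0$ buys you only a well-defined $\hat P$, nothing structural about the deformation space. So the contradiction you are after simply does not arise. Relatedly, your statement that the linearized constraint map has ``finite-dimensional cokernel spanned by the asymptotic translation KIDs'' is not correct: with the modified operator $\overline{\Phi}_{(g,\pi)}$ the linearization is surjective on the relevant weighted spaces (Lemma~\ref{lemma:surjective}), so there is no cokernel to project away from, and for compactly supported $(h,w)$ one has $\dot E=\dot P=0$.

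The paper exploits exactly the obstruction you are running into. Because $(g,\pi)$ is a local \emph{minimizer} of the modified Regge--Teitelboim functional on the level set $\mathcal{C}_{(g,\pi)}=\{\overline{\Phi}_{(g,\pi)}=\overline{\Phi}_{(g,\pi)}(g,\pi)\}$ (which preserves DEC by Proposition~\ref{proposition:DEC}), Lagrange multipliers produce a lapse-shift pair $(f,X)$ with $(D\overline{\Phi}_{(g,\pi)})^*(f,X)=0$ and $(f,X)\to(E,-2P)$ at infinity. The substantive work, which your outline does not touch, is the asymptotic analysis of this adjoint equation (Theorem~\ref{theorem:kernel}, after Beig--Chru\'sciel): one expands $(f,X)$ to next order, reads off the ADM quantities as coefficients, and a flux computation forces $E=0$. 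Your density reduction to strict DEC/harmonic asymptotics is also problematic at the outset, since such approximations perturb $(E,P)$ and will generically destroy the borderline equality $E=|P|$ you need to maintain.
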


We emphasize that our proof only uses the positive mass inequality (proven in \cite{Eichmair-Huang-Lee-Schoen:2016} for $3\le n \le 7$) as an input  and does not use its proof in any way, and thus our result holds in arbitrary dimensions whenever the positive mass inequality holds. We describe our generalization of Theorem~\ref{theorem:main1} more precisely as follows. See Definition~\ref{definition:AF} for the precise definition of asymptotic flatness used in this paper, as well as our hypotheses on $(p, q, q_0, \alpha)$.

\begin{Definition}\label{definition:inequality}
Let $(M, g, k)$ be an asymptotically flat initial data set of type $(p, q, q_0, \alpha)$. We say that \emph{the positive mass inequality holds near $(g, k)$} if there is an open ball  centered at $(g, k)$ in $C^{2,\alpha}_{-q}\times C^{1,\alpha}_{-1-q}$ such that for each asymptotically flat initial data set $(\bar{g}, \bar{k})$ in that open ball of the same type satisfying the dominant energy condition, we have $\bar{E} \ge |\bar{P}|$,  where $(\bar{E}, \bar{P})$ is the ADM energy-momentum vector of $(\bar{g}, \bar{k})$.   
\end{Definition}

\begin{Theorem}\label{theorem:main}
Let $n\ge 3$.  Let $(M, g, k)$ be an $n$-dimensional asymptotically flat initial data set with the dominant energy condition. Suppose that the positive mass inequality holds near $(g, k)$.  If $E=|P|$, then  $E=|P|=0$.
\end{Theorem}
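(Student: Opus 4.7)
My plan is to argue by contradiction. Assume $E = |P| > 0$; I will construct an asymptotically flat initial data set $(\bar g, \bar k)$ in every $C^{2,\alpha}_{-q} \times C^{1,\alpha}_{-1-q}$ neighborhood of $(g, k)$ that satisfies the dominant energy condition but has $\bar E < |\bar P|$, contradicting the hypothesis that the positive mass inequality holds near $(g, k)$.

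The first reduction is to use a density theorem to replace $(g, k)$ by nearby initial data with harmonic (Schwarzschild-like) asymptotics and with strict DEC, $\mu > |J|$, outside some compact set. On such data $(E, P)$ is given by explicit boundary integrals of the leading asymptotic coefficients, and small compactly supported perturbations automatically preserve DEC in the exterior. The main claim is then that the map $\mathcal F \colon (g, k) \mapsto (E, P)$, restricted to nearby DEC-satisfying data, has image containing a full neighborhood of $(E, P) \in \mathbb R^{1+n}$. Granting this, the point $E = |P| > 0$ lies on the boundary of the cone $\{\bar E \geq |\bar P|\}$, so some nearby DEC data must satisfy $\bar E < |\bar P|$, yielding the contradiction.

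To prove the openness claim, I would produce, for each prescribed infinitesimal change $(\delta E, \delta P) \in \mathbb R^{1+n}$, a one-parameter deformation $(g_t, k_t)$ realizing that change to first order while keeping DEC. Natural candidates come from modifying the leading asymptotic coefficients by boost/translation-type generators, which shift $(E, P)$ explicitly through the boundary integrals; the nontrivial task is to correct these globally so that the full nonlinear inequality $\mu(g_t, k_t) \geq |J(g_t, k_t)|$ holds. The main obstacle—and the step I expect to require the most care—is carrying out this correction in a Corvino--Schoen-type fashion: adding a compactly supported adjustment so that the nonlinear constraint map $\Phi = (2\mu, J)$ responds as needed, exploiting surjectivity of the linearized operator $D\Phi$ on compact interior regions where the strict DEC slack absorbs small errors. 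The delicate case is when the cokernel of the formal adjoint $D\Phi^{\ast}$ is nontrivial, i.e.\ when $(g,k)$ admits a Killing initial data set. There one cannot perturb $(E, P)$ freely, and one must instead use the symmetry directly: the asymptotic form of a nontrivial KID paired against the hypothesis $E = |P| > 0$ should either be inconsistent with the decay of $(g, k)$ or force the ADM quantities themselves to vanish. Making this dichotomy rigorous is, I anticipate, the technical heart of the argument.
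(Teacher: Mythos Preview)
Your proposal has a genuine gap in the central step. You want to show that the map $(g,k)\mapsto (E,P)$, restricted to nearby DEC data, has open image; but this is exactly what the positive mass inequality \emph{prevents} at a boundary point $E=|P|$. So you need a mechanism that says: openness holds \emph{unless} some rigid structure is present, and then that structure forces $E=0$. Your candidate for the obstruction is a nontrivial cokernel of $D\Phi$, i.e.\ a KID. However, on the relevant weighted spaces the linearized constraint map $D\Phi$ (and its modification $D\overline{\Phi}_{(g,\pi)}$) is \emph{always} surjective; there is never a cokernel obstruction in that sense. The obstruction you actually need is not an arbitrary KID but a lapse--shift pair in the kernel of the adjoint that is asymptotic to a specific translational pair determined by $(E,P)$. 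Your proposal gives no mechanism to produce such an element from the failure of openness, nor a clean argument that openness holds in its absence. The Corvino--Schoen correction you invoke operates on compact regions and cannot repair the constraint violations created by the non-compactly-supported deformations needed to move $(E,P)$; and the ``strict DEC slack'' decays at the same or faster rate as those violations, so it cannot absorb them at infinity.

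The paper's approach avoids this difficulty by reversing the logic. Rather than trying to perturb $(E,P)$ off the null cone, it uses the assumed inequality $\bar E\ge |\bar P|$ to recognize $(g,\pi)$ as a local minimizer of a modified Regge--Teitelboim functional over the level set of the modified constraint operator $\overline{\Phi}_{(g,\pi)}$; the modification is designed precisely so that this level set lies inside DEC data. Lagrange multipliers then \emph{directly produce} a pair $(f,X)$ in the kernel of $(D\overline{\Phi}_{(g,\pi)})^*$ asymptotic to $(E,-2P)$. The Beig--Chru\'sciel asymptotic analysis of this over-determined system then forces $E=|P|=0$. In short, the paper does not argue by a KID/no-KID dichotomy; it guarantees the required asymptotically translational kernel element via the variational structure, which is the step your outline is missing.
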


The above statement was proved in three dimensions by R.~Beig and P.~Chru\'{s}ciel  using the spinor approach in 1996 ~\cite{Beig-Chrusciel:1996}, and  has been directly extended  by Chru\'{s}ciel and D.~Maerten for \emph{spin} manifolds in higher dimensions \cite{Chrusciel-Maerten:2006}.  Our proof of Theorem~\ref{theorem:main} is a different, variational approach that applies generally without the spin assumption. 

We give a brief history of the positive mass theorem. The special case  $k=0$ is often called the Riemannian positive mass theorem. In this case, $|P|=0$ and the dominant energy condition is reduced to the condition that the scalar curvature of $g$ is nonnegative everywhere. R.~Schoen and S.-T.~Yau  proved  the Riemannian positive mass theorem  $E\ge 0$ in dimension less than eight using minimal surfaces \cite{Schoen-Yau:1979-pmt1} (see also \cite{Schoen-Yau:1981-asymptotics, Schoen-Yau:1979-pat, Schoen:1989}). 
In higher dimensions, the induction argument may break down due to possible singularities of minimal hypersurfaces. Recently, Schoen and Yau proved the Riemannian positive mass theorem in all dimensions \cite{Schoen-Yau:2017}. Since the proof of the inequality $E\ge 0$ is by contradiction, a separate argument is used to give a characterization of the  equality case that if $E=0$, then $(M, g)$ is isometric to Euclidean space.

In the case $k\ne 0$, Schoen and Yau also proved that $E\ge 0$ in dimension three using the Jang equation to reduce to the Riemannian case  \cite{Schoen-Yau:1981-pmt2}. 
M.~Eichmair generalized the Jang equation argument and proved the $E\ge 0$  theorem in dimensions less than eight \cite{Eichmair:2013}. These results also show that if $E=0$, then $(M, g, k)$ can be isometrically embedded in Minkowski spacetime with the second fundamental form $k$.  

Together with Eichmair and Schoen, the authors proved that the  positive mass inequality $E\ge |P|$ holds in dimensions less than eight ~\cite{Eichmair-Huang-Lee-Schoen:2016} by using marginally outer trapped hypersurfaces (MOTS) in place of the minimal hypersurfaces used in the Schoen-Yau proof of the Riemannian positive mass theorem. Since MOTS are not known to obey a useful variational principle, a major part of the proof is to find an appropriate substitute of the first variational formula for the area functional that can be used to produce the MOTS-stability. The dimensional restriction is due to possible singularities of MOTS, just as in the Riemannian case.  We note that it was previously understood that a heuristic ``boost argument'' shows  that the $E\ge 0$ theorem \emph{implies} the positive mass inequality. In that same paper, we also made rigorous the heuristic boost argument reduction by proving a new density theorem.  Using  the boost argument, J.~Lohkamp has announced a new compactification argument to prove positivity for $n \ge 3$ in~\cite{Lohkamp:2016}. We note that both the MOTS approach and  the boost argument are by contradiction, so they do not give any information about the equality case  $E=|P|$, which is addressed in the current paper.

There is a different approach to the positive mass theorem due to Witten \cite{Witten:1981} (see also \cite{Parker-Taubes:1982}). The proof can be extended to \emph{spin} manifolds of all \hbox{dimensions~\cite{Ding:2008, Bartnik:1986}.} In his paper, Witten also gave a sketch to characterize the  $E=|P|$ case for vacuum initial data sets, which led to the conjecture that the only possibility for $E=|P|$ is when $E=|P|=0$ and $(M,g,k)$ embeds as a slice of Minkowski space. The conjecture in dimension three under various stronger assumptions  was proved by A.~Ashtekar and G.~Horowitz~\cite{Ashtekar-Horowitz:1982} and  P.F.~Yip~\cite{Yip:1987}. As mentioned above, a complete and rigorous proof is due to Beig and Chru\'{s}ciel in three dimensions~\cite{Beig-Chrusciel:1996} and Chru\'{s}ciel and Maerten for spin manifolds in higher dimensions~\cite{Chrusciel-Maerten:2006}.



Combined with the aforementioned work of Schoen and Yau~\cite{Schoen-Yau:1981-pmt2} and Eichmair~\cite{Eichmair:2013} characterizing the $E=0$ case, our main theorem immediately implies the following. 
\begin{Corollary}
Let $3\le n \le  7$, and let $(M, g, k)$ be an $n$-dimensional asymptotically flat initial data set satisfying the dominant energy condition. If $n=3$, further assume\footnote{Note that for $n=3$, our asymptotic flatness assumption (Definition 2.5) only gives $s>3/2$.}  that $\mathrm{tr}_g k = O(|x|^{-s})$ for some $s>2$. If $E=|P|$, then $(M, g, k)$ can be isometrically embedded into Minkowski spacetime with the induced second fundamental form~$k$.
\end{Corollary}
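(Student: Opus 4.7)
The plan is to derive this corollary as an essentially immediate consequence of Theorem~\ref{theorem:main1} combined with the previously established characterizations of the equality case in the $E=0$ positive mass theorem. The key observation is that the hypothesis $E=|P|$ is strong enough, via the new main theorem, to reduce the problem to one in which the asymmetry between energy and momentum has been eliminated, after which one may invoke known spacetime rigidity results that address only the $E=0$ case.

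First, I would apply Theorem~\ref{theorem:main1}, valid in the dimension range $3 \le n \le 7$ and under the dominant energy condition, to conclude from $E=|P|$ that in fact $E=|P|=0$. In particular, the ADM energy vanishes. At this stage the momentum has also dropped out of the problem, so the remaining task is purely the rigidity statement for the $E=0$ case of the spacetime positive mass theorem.

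Next, I would invoke the characterization of this equality case in the existing literature. In dimension three, the result of Schoen and Yau~\cite{Schoen-Yau:1981-pmt2}, proven via the Jang equation reduction to the Riemannian positive mass theorem, asserts that an asymptotically flat initial data set $(M,g,k)$ satisfying the dominant energy condition with $E=0$ embeds isometrically into Minkowski spacetime with second fundamental form $k$; the additional decay assumption $\mathrm{tr}_g k = O(|x|^{-\gamma})$ with $\gamma>2$ enters precisely here, as it is required to guarantee solvability and appropriate asymptotics of the Jang equation in that proof. In dimensions $4 \le n \le 7$, the analogous statement was established by Eichmair~\cite{Eichmair:2013} using his generalization of the Jang reduction, with no further hypothesis needed. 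Applying the appropriate result in each dimension produces the desired isometric embedding into Minkowski space.

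Since both of the ingredients, namely Theorem~\ref{theorem:main1} above and the cited $E=0$ rigidity theorems, are already in hand, there is no real obstacle to overcome. The only subtlety worth flagging in writing up the argument is matching the hypotheses of the cited results to the setting of Theorem~\ref{theorem:main1}, in particular the asymptotic decay assumptions built into the notion of asymptotically flat initial data set used here, and verifying that these are compatible with, or imply, the decay conditions under which Schoen--Yau and Eichmair proved their respective embedding results; this is a matter of bookkeeping rather than new analysis.
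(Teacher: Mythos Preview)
Your proposal is correct and matches the paper's own treatment: the corollary is stated as an immediate consequence of Theorem~\ref{theorem:main1} together with the $E=0$ rigidity results of Schoen--Yau~\cite{Schoen-Yau:1981-pmt2} (for $n=3$, under the extra $\mathrm{tr}_g k$ decay) and Eichmair~\cite{Eichmair:2013} (for $4\le n\le 7$). No further argument is given or needed.
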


We now outline the proof of Theorem~\ref{theorem:main}. Let $(M, g, k)$ be an asymptotically flat initial data set satisfying the dominant energy condition,  as well as the assumption $E=|P|$. Given a scalar function $f_0$ and a vector field $X_0$, we introduce a functional~$\mathcal{H}$ (see Definition~\ref{definition:functional}) on the space of initial data sets. The functional is obtained from the classical Regge-Teitelboim Hamiltonian  by replacing the usual constraint operator with the \emph{modified constraint operator} $\overline{\Phi}_{(g,\pi)}$ introduced by the first named author and J.~Corvino~\cite{Corvino-Huang:2019}. Choosing the pair  $(f_0, X_0)$ asymptoting to $(E, -2P)$, we apply the Sobolev positive mass inequality (Theorem~\ref{theorem:inequality}) to see that $(g, k)$ locally minimizes the functional $\mathcal{H}$ among initial data sets with the dominant energy condition. In contrast, the classical Regge-Teitelboim Hamiltonian is not known to have a local minimizer among the analogous constrained minimization. Using the theory of Lagrange multipliers, we produce a pair $(f, X)$ in the kernel of the adjoint $(D\overline{\Phi}_{(g,\pi)})^*$ of the linearized modified constraint operator that is asymptotic to $(f_0, X_0)$. Analyzing the solution to the equations $(D\overline{\Phi}_{(g,\pi)})^*(f, X)=0$, we obtain $E=|P|=0$.  

Our approach is motivated by the work of R.~Bartnik~\cite{Bartnik:2005} toward his quasi-local mass program. Aside from analytical technicalities, Bartnik's argument could be applied, under the additional assumption that $(g,k)$ is a three-dimensional vacuum initial data set. Using the new modified functional, we are able to handle general initial data sets with dominant energy condition. We also use a different analytical framework.

The paper is organized as follows. In Section~\ref{section:prelim}, we present the basic definitions and recall the modified constraint operator of~\cite{Corvino-Huang:2019}.  In Section~\ref{section:DEC}, we present a fundamental property of the modified constraint operator.  In Section~\ref{section:inequality}, we prove a Sobolev version of positive mass inequality. We also include a deformation result to the strict dominant energy condition (Theorem~\ref{theorem:strict}), which may be of independent interest. The main argument to prove Theorem~\ref{theorem:main} is in Section~\ref{section:variational}. 

\begin{ack}
The  authors would like to express their sincere gratitude to Richard Schoen for discussion and  support. They are also grateful to Hugh Bray, Justin Corvino, Greg Galloway, Jim Isenberg,  Christina Sormani, and Mu-Tao Wang for their encouragement. 
\end{ack}

\section{Preliminaries}\label{section:prelim}
\begin{definition}
Let $n\ge3$. An \emph{initial data set} is an $n$-dimensional smooth  manifold $M$ equipped with a weakly twice differentiable and continuous Riemannian metric $g$ and a weakly differentiable symmetric $(2, 0)$-tensor $\pi$ called the \emph{momentum tensor}. The momentum tensor is related to the more traditional $(0,2)$-tensor $k$, mentioned in Section~\ref{section:introduction}, via the equation
\[
	\pi^{ij}= k^{ij} -(\mathrm{tr}_g k)g^{ij},
\]
where the indices on the right have been raised using $g$.
The momentum tensor contains the same information as $k$ since $k^{ij} = \pi^{ij} - \frac{1}{n-1}(\mathrm{tr}_g \pi)g^{ij}$. 

We define the \emph{mass density} $\mu$ and the \emph{current density} $J$ (which is a vector quantity) by
\begin{align*} 
\begin{split}
	\mu&=\tfrac{1}{2}\left(R_g  + \tfrac{1}{n-1}(\mathrm{tr}_g \pi )^2 - |\pi|_g^2 \right)\\
	J&=  \Div_g \pi,
\end{split}
\end{align*}
where $R_g$ is the scalar curvature of $g$ and  $(\Div_g \pi)^i = \sum_j \pi^{ij}_{;j}$.  We define the \emph{constraint operator} on initial data by
\begin{align}\label{equation:constraint-map}
\Phi(g,\pi) = (2\mu, J) =\left (R_g  + \tfrac{1}{n-1}(\mathrm{tr}_g \pi )^2 - |\pi|_g^2, \Div_g \pi \right).
\end{align}
We say that $(M, g, \pi)$ satisfies the \emph{dominant energy condition} if 
\[ 
	\mu\ge |J|_g
\] 
everywhere in $M$. 
\end{definition}
We note that our definition of the constraint operator follows the preceding paper on the positive mass inequality~\cite{Eichmair-Huang-Lee-Schoen:2016}, but it causes discrepancies with the analogous formulas  in other references (e.g. \cite{Beig-Chrusciel:1996}) because of different normalizing conventions.

\begin{definition}
Let $B\subset \mathbb{R}^n$ be the closed unit ball  centered at the origin. For each nonnegative integer $k$, $ \alpha\in[ 0, 1]$, and $q\in \mathbb{R}$, we define the \emph{weighted H\"older space} $C^{k,\alpha}_{-q}(\mathbb{R}^n \setminus B)$ as the collection of those $f\in C^{k,\alpha}_{\mathrm{loc}}(\mathbb{R}^n\setminus B)$ with
\begin{align*}
	\| f \|_{C^{k,\alpha}_{-q}(\mathbb{R}^n \setminus B)}&:=\sum_{|I|\le k}\sup_{x\in \mathbb{R}^n\setminus B} \left| |x|^{|I|+q} (\partial^I f)(x)\right| + \sum_{|I|=k} \sup_{\substack{x, y\in \mathbb{R}^n\setminus B\\0<|x-y| \le |x|/2}}  |x|^{\alpha+|I|+q} \frac{| \partial^I f(x) - \partial^I f(y)|}{|x-y|^\alpha}<\infty.
\end{align*}
Let $M$ be a smooth manifold such that there is a compact subset $K\subset M$ and a diffeomorphism $M\setminus K \cong \mathbb{R}^n\setminus B$. We can define the $C^{k,\alpha}_{-q}$ norm on $M$ using an atlas of $M$ that consists of the diffeomorphism $M\setminus K \cong \mathbb{R}^n\setminus B$ and finitely many precompact charts, and then sum  the $C^{k,\alpha}_{-q}$ norm on the non-compact chart and the $C^{k,\alpha}$ norm on the precompact charts. We denote by $C^{k,\alpha}_{-q}(M)$ the completion of compactly supported  smooth functions with respect to the $C^{k,\alpha}_{-q}$ norm.  We use the notation $f= O^{k,\alpha} (|x|^{-q})$ interchangeably with  $f\in C^{k,\alpha}_{-q} (M)$.
\end{definition}

\begin{definition}\label{definition:weighted}
For each nonnegative integer $k$, $1\le p <\infty$, and $q\in \mathbb{R}$, we define the \emph{weighted Sobolev  space} $W^{k,p}_{-q}(\mathbb{R}^n\setminus B)$ as the collection of those $f$ with 
\[
	\| f \|_{W^{k,p}_{-q}(\mathbb{R}^n\setminus B)} := \left( \int_{\mathbb{R}^n\setminus B} \sum_{|I|\le k} \left| |x|^{|I|+q}  (\partial^I f)(x)\right|^p |x|^{-n} \, dx\right)^{1/p} < \infty.
\]
Suppose $M$ is a smooth manifold  such that there is a compact subset $K\subset M$ and a diffeomorphism $M\setminus K \cong \mathbb{R}^n\setminus B$. We can define the space $W^{k,p}_{-q}(M)$  as we did for $C^{k,\alpha}_{-q}(M)$ in the previous definition. We write $L^p_{-q}(M)$ instead of $W^{0,p}_{-q}(M)$.

We usually write $C^{k,\alpha}_{-q}$ for $C_{-q}^{k, \alpha}(M)$ and $W^{k,p}_{-q}$ for $W^{k,p}_{-q}(M)$ when the context is clear. The above norms can be extended to the tensor bundles of $M$ by summing the respective norms of the tensor components with respect to those charts.  It should be clear from context when we use the notation $C^{k,\alpha}_{-q}$ or $W^{k,p}_{-q}$  to denote spaces of functions or spaces of tensors.

\end{definition}
\begin{remark}\label{remark:inclusion}
Note that the above weighted spaces have a natural inclusion relation  $C^{k,\alpha}_{-q-\epsilon} \subset W^{k,p}_{-q}$ for any $\epsilon>0$.  On the other hand, by Sobolev embedding, if $p>n$, then $W^{k,p}_{-q} \subset C^{k-1, 1-\frac{n}{p} }_{-q}$. 
\end{remark}

\begin{definition}\label{definition:AF}
We assume
\[
	 n\ge 3, \qquad p>n,  \qquad q \in ( \tfrac{n-2}{2}, n-2), \qquad q_0 >0, \qquad \mbox{and} \qquad \alpha\in (0,1)
\] 
and, in addition,  
\begin{align}\label{equation:extra}
	q+\alpha >n-2.
\end{align}
Let $M$ be a connected smooth manifold without boundary. We say that an initial data set $(M, g, \pi)$ is \emph{asymptotically flat}  if  there is a compact subset $K \subset M$ and a diffeomorphism $M\setminus K  \cong \mathbb{R}^n \setminus B$ such that  
\begin{align}\label{equation:AF}
	(g - g_{\mathbb{E}}, \pi) \in \left(C^{2,\alpha}_{-q} \times C^{1,\alpha}_{-1-q} \right) \cap \left(W^{2,p}_{-q}\times W^{1,p}_{-1-q} \right)
\end{align}
and
\[
	\mu, J \in C^{0,\alpha}_{-n-q_0}
\]
where $g_{\mathbb{E}}$ is a complete smooth Riemannian background metric on $M$ that is equal to the Euclidean inner product in the coordinate chart $M\setminus K  \cong \mathbb{R}^n \setminus B$. We may sometimes refer to an asymptotically flat initial data set $(M, g, \pi)$ as being  \emph{of type $(p, q, q_0, \alpha)$} when we wish to emphasize the regularity assumption. 
\end{definition}

By the natural inclusion relation between H\"older and Sobolev spaces mentioned in Remark~\ref{remark:inclusion},  it suffices to assume $(g-g_{\mathbb{E}}, \pi)\in C^{2,\alpha}_{-q-\epsilon} \times C^{1,\alpha}_{-1-q-\epsilon}$ for some $\epsilon>0$, in place of  \eqref{equation:AF}. The current definition is for the convenience of fixing the fall-off rates of both H\"older and Sobolev spaces.

\begin{remark}
The assumption~\eqref{equation:extra} is not standard, but we include it as part of our definition because it is needed for technical reasons. It is only used in Theorem~\ref{theorem:kernel} (more specifically, Lemma~\ref{lemma:strong-decay}) and not elsewhere. 
\end{remark}

\begin{remark}
Our main result still holds if we allow the above definition of initial data sets to have multiple asymptotically flat ends. We simply choose $(f_0, X_0)$ in the modified Regge-Teitelboim Hamiltonian (Definition~\ref{definition:functional}) to be identically zero on  other ends in the proof of Theorem~\ref{theorem:nontrivial-kernel}.
\end{remark}

\begin{definition}

The ADM energy $E$ and the ADM linear momentum $P=(P_1, \dots, P_n)$ of an asymptotically flat initial data set (named after Arnowitt, Deser, and Misner~\cite{ADM:1961}) are defined as 
\begin{align*}
	E&= \frac{1}{2(n-1)\omega_{n-1}} \lim_{r\to \infty} \int_{|x|=r}\sum_{i,j=1}^n (g_{ij,i}-g_{ii,j})\nu^j \, d\mathcal{H}^{n-1}\\
	P_i &= \frac{1}{(n-1)\omega_{n-1}} \lim_{r\to \infty} \int_{|x|=r} \sum_{i,j=1}^n \pi_{ij} \nu^j \, d\mathcal{H}^{n-1}
\end{align*}	
where the integrals are computed in $M\setminus K \cong \mathbb{R}^n \setminus B$, $\nu^j = x^j/|x|$, $d\mathcal{H}^{n-1}$ is the $(n-1)$-dimensional Euclidean Hausdorff measure,  $\omega_{n-1}$ is the volume of the standard $(n-1)$-dimensional unit  sphere, and the commas denote partial differentiation in the coordinate directions.  We  sometimes write the dependence on $(g, \pi)$ explicitly as $E(g,\pi)$ and $P(g,\pi)$.
\end{definition}

We now recall the modified constraint operator that was introduced by the first named author and J.~Corvino in \cite{Corvino-Huang:2019}, based on earlier study of the modified linearization in \cite[Section 6.1]{Eichmair-Huang-Lee-Schoen:2016}.

\begin{definition}
Given an initial data set $(M, g, \pi)$, we define the  \emph{modified constraint map $\overline{\Phi}_{(g,\pi)}$ at $(g,\pi)$} to be the operator on other initial data $(\gamma, \tau)$ given by
\begin{equation} \label{eq:def-mco} 
 \overline{\Phi}_{(g, \pi)}(\gamma, \tau)= \Phi(\gamma, \tau)+  \left(0, \tfrac{1}{2} \gamma \cdot (\textup{div}_g \pi)\right),
\end{equation}
where in local coordinates $ (\gamma \cdot (\textup{div}_g \pi))^i= g^{ij} \gamma_{jk} (\textup{div}_g \pi)^k$  and $\Phi(\gamma,\tau) $ is the usual constraint  \eqref{equation:constraint-map}. Here and throughout the paper, we use the Einstein summation convention.
\end{definition}

We denote its linearization at $(g, \pi)$  by $D\overline{\Phi}_{(g,\pi)}|_{(g,\pi)} $, or simply $D\overline{\Phi}_{(g,\pi)}$ for ease of notation. For a symmetric $(0,2)$-tensor $h$ and a symmetric $(2,0)$-tensor $w$, we have
\begin{align} \label{equation:modified}
	&D{\overline{\Phi}}_{(g,\pi)}(h , w) =D\Phi |_{(g,\pi)} (h,w) + (0,\tfrac{1}{2} h\cdot J)
\end{align}
where $J=\textup{div}_g \pi $ and 
\begin{align}\label{equation:constraint}
\begin{split}
	D\Phi|_{(g,\pi)} (h , w) &=\Big(L_g h -2 h_{ij} \pi_\ell^i \pi^{j\ell} - 2 \pi^j_k w^k_j  +\tfrac{2}{n-1}\mbox{tr}_g \pi (h_{ij} \pi^{ij} + \mbox{tr}_g w), \\
	&\qquad ( \mbox{div}_g w)^i - \tfrac{1}{2} \pi^{jk} h_{jk;\ell} g^{\ell i} + \pi^{jk} h^i_{j;k} +\tfrac{1}{2} \pi^{ij} (\mbox{tr}_g h)_{,j}\Big). 
\end{split}
\end{align}
Here all indices are raised or lowered using $g$, $L_g h:= -\Delta_g(\mbox{tr}_g h) + \mbox{div}_g \mbox{div}_g (h) - h^{ij} R_{ij}$ where $R_{ij}$ is the Ricci curvature of $g$,  and the semi-colon indicates covariant derivatives with respect to $g$. The formal adjoint operator of  $D\overline{\Phi}_{(g,\pi)}$ with respect to the $L^2$ product defined by $g$ has the following expression, for a function $f$ and a vector field $X$:
\begin{align} \label{equation:modified-adjoint}
	(D\overline{\Phi}_{(g,\pi)})^*(f, X) = D\Phi|^*_{(g,\pi)} (f, X) + \left( \tfrac{1}{2} X\odot J, 0\right),
\end{align}
where  $(X\odot J)_{ij}  = \frac{1}{2} (X_i J_j + X_j J_i)$ denotes the symmetric product,  and  $D\Phi|^*_{(g,\pi)} (f, X)$ is the adjoint operator of the  usual constraint map. Explicitly, 
\begin{align}
 	D\Phi|_{(g,\pi)} ^*(f, X)  & = \left(  L_g^*f +\left( \tfrac{2}{n-1} (\mbox{tr}_g \pi) \pi_{ij} - 2 \pi_{ik} \pi^k_j \right) f\right.
	\label{equation:adjoint}\\
	& \quad+ \tfrac{1}{2} \left( g_{i\ell}g_{jm} (L_X\pi)^{\ell m} + (\Div_g X) \pi_{ij}- J_i X_j - J_j X_i- X_{k;m} \pi^{km} g_{ij} - g(X, J) g_{ij} \right), \nonumber\\
	&\quad \left. -\tfrac{1}{2} (L_Xg)^{ij} + \left(\tfrac{2}{n-1} (\mbox{tr}_g \pi ) g^{ij}- 2 \pi^{ij}  \right) f\right) \nonumber
 \end{align}
where  $L_g^*f = -(\Delta_g f)g + \textup{Hess}_g f - f \textup{Ric}(g)$. The above formulas can be found in, for example, \cite[Lemma 2.3]{Corvino-Schoen:2006} for $n=3$, and \cite[Lemma 20]{Eichmair-Huang-Lee-Schoen:2016} and \cite[Section 2.1]{Corvino-Huang:2019} for general $n$.

Define  $\mathscr{M}^{2,p}_{-q}$ to be the set of symmetric $(0,2)$-tensors $\gamma$ such that  $\gamma-g_{\mathbb{E}}\in W^{2,p}_{-q}(M)$ and $\gamma$ is positive definite at each point. Note that by Sobolev embedding, $\gamma$ must be continuous (in fact, $C^{1,\alpha}_{\mathrm{loc}}$). That is, $\mathscr{M}^{2,p}_{-q}$ is the set of continuous Riemannian metrics that are asymptotic to $g_{\mathbb{E}}$ in $W^{2,p}_{-q}(M)$. Using an affine identification, note that we may regard $\mathscr{M}^{2,p}_{-q}$ as an open subset of the Banach space of  $W^{2,p}_{-q}$ symmetric $(0,2)$-tensors. 

We conclude the section with the following statement, whose proof is included in Appendix~\ref{section:surjective} for completeness.

\begin{lemma}[Cf. {\cite[Proposition 3.1]{Corvino-Schoen:2006},\cite[Lemma 20]{Eichmair-Huang-Lee-Schoen:2016}}] \label{lemma:surjective}
Let $(M, g, \pi)$ be an initial data set with $(g- g_\mathbb{E}, \pi) \in C^{2}_{-q} \times C^{1}_{-1-q}$. The modified constraint map $\overline{\Phi}_{(g,\pi)}:  \mathscr{M}^{2,p}_{-q}\times W^{1,p}_{-1-q}\too L^p_{-2-q}$ is smooth, and $D\overline{\Phi}_{(g,\pi)}: W^{2,p}_{-q}\times W^{1,p}_{-1-q}\too L^p_{-2-q}$ is surjective.  
\end{lemma}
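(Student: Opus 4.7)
The plan is to treat smoothness and surjectivity separately. For smoothness, the key observation is that $\overline{\Phi}_{(g,\pi)}(\gamma,\tau)$ is a universal polynomial in $\gamma-g_{\mathbb{E}}$, $\gamma^{-1}$, $\tau$ and their derivatives (two in $\gamma$, one in $\tau$), plus the lower-order term $\tfrac{1}{2}\gamma\cdot(\mathrm{div}_g \pi)$ whose fixed coefficient $\mathrm{div}_g\pi$ lies in $C^0_{-2-q}\subset L^p_{-2-q}$. Since $p>n$, Sobolev embedding gives $W^{2,p}_{-q}\hookrightarrow C^{0,\alpha}_{-q}\cap L^\infty$, which makes the matrix inversion $\gamma\mapsto\gamma^{-1}$ a smooth map on the open set $\mathscr{M}^{2,p}_{-q}$. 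Standard weighted multiplication inequalities (for which $p>n$ and the stipulated decay rates are tailored) then imply that every monomial in the expansion of $\overline{\Phi}$ lands in $L^p_{-2-q}$ and depends smoothly on $(\gamma,\tau)$.

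For surjectivity of $D\overline{\Phi}_{(g,\pi)}$, I would follow the adjoint method of Corvino-Schoen. Given $\rho$ in the target space, the idea is to construct a preimage of the form $(h,w)=(D\overline{\Phi}_{(g,\pi)})^*(f,X)$ by solving the second-order system
\[
    T(f,X) := D\overline{\Phi}_{(g,\pi)}\,(D\overline{\Phi}_{(g,\pi)})^*(f,X) = \rho.
\]
The first step is to observe, using \eqref{equation:modified} and \eqref{equation:modified-adjoint}, that the modifying terms are of order zero in both the linearization and its adjoint, so the principal symbol of $T$ agrees with that of the classical operator $D\Phi\circ D\Phi^*$ at $(g,\pi)$, which is (Douglis-Nirenberg) elliptic by \cite[Lemma 2.4]{Corvino-Schoen:2006} and \cite[Lemma 20]{Eichmair-Huang-Lee-Schoen:2016}. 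Applying Bartnik's weighted elliptic theory~\cite{Bartnik:1986}, with $q\in(\tfrac{n-2}{2},n-2)$ avoiding the exceptional indicial radii on an asymptotically Euclidean end, one concludes that $T$ is Fredholm of index zero between the natural weighted Sobolev space for $(f,X)$ and $L^p_{-2-q}$.

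Once Fredholmness of index zero is in place, surjectivity reduces to the injectivity of $T$, which I expect to be the main obstacle. If $T(f,X)=0$, then pairing with $(f,X)$ in $L^2$ and integrating by parts—the prescribed weighted decay justifies the vanishing of the boundary terms at infinity—forces $(D\overline{\Phi}_{(g,\pi)})^*(f,X)=0$. By \eqref{equation:modified-adjoint} this is a lower-order perturbation of the standard KID equation $D\Phi^*|_{(g,\pi)}(f,X)=0$, with perturbation coefficient $J$ enjoying the strong decay $J\in C^{0,\alpha}_{-n-q_0}$. The usual asymptotic/indicial analysis of the KID operator on an asymptotically flat end shows that any solution with the prescribed fall-off must vanish identically; this step is the analytical heart of the cited references, and the zeroth-order modification here does not affect it. Surjectivity of $D\overline{\Phi}_{(g,\pi)}$ follows, completing the plan.
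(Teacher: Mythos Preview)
Your approach is correct and is precisely the one in the cited references~\cite{Corvino-Schoen:2006, Eichmair-Huang-Lee-Schoen:2016}; the paper gives no independent proof of this lemma beyond the citation and the remark following it. Two small corrections are worth noting. First, under the lemma's hypotheses alone one only has $J=\mathrm{div}_g\pi\in C^0_{-2-q}$, not $C^{0,\alpha}_{-n-q_0}$ (the latter is part of the full asymptotically-flat definition, which is not assumed here); this does not affect your argument since the modification is zeroth order. Second, your final step---``asymptotic/indicial analysis shows a decaying solution of $(D\overline{\Phi}_{(g,\pi)})^*(f,X)=0$ vanishes identically''---actually requires two ingredients: the asymptotic analysis of the Hessian-type adjoint equations yields vanishing on the end, and then \emph{unique continuation} propagates this to all of $M$. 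The paper's remark after the lemma singles out exactly this point: unique continuation for the adjoint system needs $\mathrm{Ric}_g,\nabla\pi\in C^0_{-2-q}$, which is why the hypothesis $(g-g_{\mathbb{E}},\pi)\in C^2_{-q}\times C^1_{-1-q}$, rather than merely $W^{2,p}_{-q}\times W^{1,p}_{-1-q}$, appears in the statement.
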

\begin{remark}
Note the hypothesis that $(g- g_\mathbb{E}, \pi) \in C^{2}_{-q} \times C^{1}_{-1-q}$.
We are grateful for Luen-Fai~Tam and Tin~Yau~Tsang for pointing out an inaccuracy in \cite[Lemma 20]{Eichmair-Huang-Lee-Schoen:2016}: the weaker assumption $(g-g_{\mathbb{E}}, \pi)\in W^{2,p}_{-q} \times W^{1,p}_{-1-q}$ stated in that paper does not seem sufficient to implement the proof given there. Specifically, to apply unique continuation to the adjoint equations in the last paragraph  of the proof of~\cite[p. 111]{Eichmair-Huang-Lee-Schoen:2016} requires an additional hypothesis that $\textup{Ric}_g, \nabla \pi \in C^0_{-2-q}$, as those terms appear in the coefficients of the adjoint equations. The additional regularity hypothesis should also be added in the statement of \cite[Theorem 1]{Eichmair-Huang-Lee-Schoen:2016}.  
\end{remark}




\section{Dominant energy condition} \label{section:DEC}

The modified constraint operator is designed to preserve the dominant energy condition. In this section, we include a fundamental property of the modified constraint operator (cf. {\cite[Lemma 3.3]{Corvino-Huang:2019}}).

\begin{proposition}\label{proposition:DEC}
Let $(M, g, \pi)$ be an initial data set with $(g- g_\mathbb{E}, \pi) \in C^{2}_{\mathrm{loc}} \times C^{1}_{\mathrm{loc}}$. Assume $(g,\pi)$ satisfies the dominant energy condition $\mu\ge |J|_g$ in $M$. Suppose $(\gamma, \tau)  \in W^{2,p}_{\mathrm{loc}} \times W^{1,p}_{\mathrm{loc}}$ is an initial data set with $|\gamma - g|_g< 3$ in $M$ and  
\[
	\overline{\Phi}_{(g,\pi)}(\gamma, \tau) = \overline{\Phi}_{(g,\pi)}(g, \pi).
\]
Then $(\gamma, \tau)$ also satisfies the dominant energy condition. 

\end{proposition}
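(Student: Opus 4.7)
The plan is to convert the hypothesis $\overline{\Phi}_{(g,\pi)}(\gamma,\tau) = \overline{\Phi}_{(g,\pi)}(g,\pi)$ into explicit equations for the mass and current densities of $(\gamma,\tau)$, and then reduce the required dominant energy condition to a purely pointwise algebraic inequality. Setting $J := \mathrm{div}_g \pi$ and observing that $(g \cdot V)^i = g^{ij}g_{jk}V^k = V^i$, the right-hand side of the hypothesis evaluates to $(2\mu_g, \tfrac{3}{2} J)$. Writing $\mu'$ and $J'$ for the mass density and current density of $(\gamma,\tau)$ and matching components yields
\begin{align*}
\mu' &= \mu_g, \\
J' &= \tfrac{3}{2} J - \tfrac{1}{2}\, \gamma \cdot J.
\end{align*}

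The DEC for $(\gamma,\tau)$ reads $\mu' \ge |J'|_\gamma$, which combined with $\mu' = \mu_g$ and the DEC hypothesis $\mu_g \ge |J|_g$ reduces to the pointwise inequality $|J'|_\gamma \le |J|_g$. Set $h := \gamma - g$ and introduce the $g$-symmetric endomorphism $H$ with components $H^i{}_j = g^{ik}h_{kj}$. Then $\gamma \cdot V = (I + H)V$ and $|V|_\gamma^2 = \langle (I+H)V, V \rangle_g$ for every vector $V$, so $J' = (I - \tfrac{1}{2} H) J$ and the required estimate becomes
\[
\bigl\langle (I+H)(I - \tfrac{1}{2}H) J,\, (I - \tfrac{1}{2}H) J \bigr\rangle_g \le \langle J, J \rangle_g.
\]

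This last step is pure linear algebra. In a $g$-orthonormal frame that diagonalizes $H$, the tensor $h$ becomes diagonal with entries the eigenvalues $\lambda_i$ of $H$, so $|h|_g^2 = \sum_i \lambda_i^2$; hence $|\gamma - g|_g < 3$ yields $|\lambda_i| < 3$, while positive definiteness of $\gamma$ forces $\lambda_i > -1$. The displayed inequality then decouples into the scalar bounds $(1 - \tfrac{\lambda_i}{2})^2 (1 + \lambda_i) \le 1$, each equivalent via the algebraic identity
\[
(1 - \tfrac{\lambda}{2})^2 (1+\lambda) - 1 = -\tfrac{\lambda^2}{4}\,(3 - \lambda)
\]
to the constraint $\lambda_i \le 3$, which we already have. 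The main conceptual point, rather than any technical obstacle, is recognizing that the modified constraint operator has been designed precisely so that the operators $(I+H)$ and $(I - \tfrac{1}{2}H)$ arise in this combination, with the sharp threshold $|h|_g < 3$ in the hypothesis matched to the root of the cubic factor appearing above.
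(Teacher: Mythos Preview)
Your proof is correct and follows essentially the same route as the paper: both extract $\bar\mu=\mu$ and $\bar J = J - \tfrac{1}{2}h\cdot J$ from the modified constraint equation and then verify the pointwise inequality $|\bar J|_\gamma^2 \le |J|_g^2$ using $|h|_g<3$. The only cosmetic difference is that the paper expands $|\bar J|_\gamma^2 = |J|_g^2 - \tfrac{3}{4}|h\cdot J|_g^2 + \tfrac{1}{4}h_{ij}(h\cdot J)^i(h\cdot J)^j$ directly and bounds the last term by $|h|_g\,|h\cdot J|_g^2$, whereas you diagonalize $H$ and check the equivalent scalar inequality $(1-\tfrac{\lambda}{2})^2(1+\lambda)\le 1$ eigenvalue by eigenvalue; these are the same computation in different packaging.
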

\begin{proof}
Let $(\bar{\mu}, \bar{J})$ be the mass and current densities of $(\gamma, \tau)$. The assumption $ \overline{\Phi}_{(g,\pi)}(\gamma, \tau) = \overline{\Phi}_{(g,\pi)}(g, \pi)$ implies
 \begin{align*}
 	\bar{\mu} &= \mu\\
	\bar{J}^i +\tfrac{1}{2} g^{ij} \gamma_{jk} J^k&=J^i +\tfrac{1}{2} g^{ij} g_{jk} J^k.
\end{align*}
Note that the second identity implies that $\bar{J}$ is at least continuous by using Sobolev embedding for~$\gamma$. Letting $h=\gamma-g$, we have
\[
	\bar{J}^i = J^i - \tfrac{1}{2}  (h\cdot J)^i 
\]
where recall $(h\cdot J)^i = g^{ij} h_{jk} J^k$. We compute, for $|h|_g< 3$, 
\begin{align} \label{equation:DEC}
\begin{split}
	|\bar{J}|_{\gamma}^2 &= \gamma_{ij} \bar{J}^i \bar{J}^j\\
	&=(g_{ij} + h_{ij})\left (J^i - \tfrac{1}{2} (h\cdot J)^i \right) \left (J^j - \tfrac{1}{2} (h\cdot J)^j \right) \\
	&=(g_{ij} + h_{ij}) \left(J^i J^j - g^{il} h_{lk}J^k J^j + \tfrac{1}{4} (h\cdot J)^i (h\cdot J)^j \right)\\
	&= |J|_g^2 -\tfrac{3}{4} |h\cdot J|_g^2 + \tfrac{1}{4} h_{ij} (h\cdot J)^i (h\cdot J)^j\\
	&\le |J|_g^2.
\end{split}
\end{align}
It implies that if $|\gamma - g|_g < 3$ then $(\gamma,\tau)$ satisfies the dominant energy condition $\bar{\mu}\ge |\bar{J}|_{\gamma}$.
\end{proof}

\section{Sobolev version of  positive mass inequality}\label{section:inequality}

For the proof of Theorem~\ref{theorem:main} in the next section, we must show that the positive mass inequality holds with only Sobolev regularity. We will use a density type argument to  approximate an initial data set of Sobolev regularity by a more regular initial data set of type $(p, q, q_0, \alpha)$. As mentioned above in the introduction, the positive mass inequality for asymptotically flat manifolds of type $(p, q, q_0, \alpha)$ was proved in \cite{Eichmair-Huang-Lee-Schoen:2016} for $3\le n \le 7$ and has been announced in \cite{Lohkamp:2016} for $n\ge 3$. 

For the following statement, please refer to Definition~\ref{definition:inequality} in Section~\ref{section:introduction} where we defined what it means for \emph{the positive mass inequality  to hold  near $(g, \pi)$}. 

\begin{theorem}[Sobolev version of  positive mass inequality]\label{theorem:inequality}

Let $(M, g, \pi)$ be asymptotically flat of type $(p, q, q_0, \alpha)$ with the dominant energy condition. Suppose the positive mass inequality holds near $(g, \pi)$.  Then there is an open ball $U$ of $(g, \pi)$ in $W^{2,p}_{-q}\times W^{1,p}_{-1-q}$ such that if $(\gamma, \tau)\in U$ and $\overline{\Phi}_{(g,\pi)}(\gamma,\tau) = \overline{\Phi}_{(g,\pi)}(g, \pi)$, we have
\[
	E(\gamma, \tau) \ge |P(\gamma, \tau)|. 
\] 
\end{theorem}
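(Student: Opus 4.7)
The plan is a density argument that reduces the Sobolev statement to the $C^{2,\alpha}$-PMI assumed in Definition~\ref{definition:inequality}. First shrink $U$ so that the Sobolev embedding $W^{2,p}_{-q}\hookrightarrow C^0_{-q}$ (valid since $p>n$) forces $|\gamma-g|_g<3$ pointwise; Proposition~\ref{proposition:DEC} then immediately delivers the dominant energy condition for any $(\gamma,\tau)\in U$ with $\overline{\Phi}_{(g,\pi)}(\gamma,\tau)=\overline{\Phi}_{(g,\pi)}(g,\pi)$. Since $E$ and $P$ are continuous functionals on $\mathscr{M}^{2,p}_{-q}\times W^{1,p}_{-1-q}$ (the boundary integrals defining them can be rewritten, via the divergence theorem, as bulk integrals of $\mu$ and $J$ plus Sobolev-continuous lower-order terms), it suffices to find a sequence $(\tilde\gamma_j,\tilde\tau_j)$ of type $(p,q,q_0,\alpha)$ with $(\tilde\gamma_j,\tilde\tau_j)\to(\gamma,\tau)$ in $W^{2,p}_{-q}\times W^{1,p}_{-1-q}$, satisfying the same modified constraint as $(g,\pi)$, and eventually lying in the $C^{2,\alpha}$-ball around $(g,\pi)$ where PMI holds.

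To construct such a sequence I would first mollify $\gamma-g$ and $\tau-\pi$ at a scale $\epsilon_j$ and multiply by a smooth cutoff at a large radius, producing smooth, compactly supported perturbations $(h_j,w_j)$ with $(h_j,w_j)\to(\gamma-g,\tau-\pi)$ in $W^{2,p}_{-q}\times W^{1,p}_{-1-q}$. The pair $(\gamma_j,\tau_j):=(g+h_j,\pi+w_j)$ coincides with $(g,\pi)$ outside a compact set and is therefore of type $(p,q,q_0,\alpha)$. These $(\gamma_j,\tau_j)$ generally do not satisfy the modified constraint, but by continuity of $\overline{\Phi}_{(g,\pi)}$ (Lemma~\ref{lemma:surjective}) the errors $e_j:=\overline{\Phi}_{(g,\pi)}(\gamma_j,\tau_j)-\overline{\Phi}_{(g,\pi)}(g,\pi)$ tend to zero in $L^p_{-2-q}$. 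Combining the bounded right inverse of $D\overline{\Phi}_{(g,\pi)}$ supplied by the surjectivity in Lemma~\ref{lemma:surjective} with a Newton iteration (equivalently, the implicit function theorem) gives corrections $(\delta\gamma_j,\delta\tau_j)\to 0$ in $W^{2,p}_{-q}\times W^{1,p}_{-1-q}$ such that $(\tilde\gamma_j,\tilde\tau_j):=(\gamma_j+\delta\gamma_j,\tau_j+\delta\tau_j)$ solves $\overline{\Phi}_{(g,\pi)}(\tilde\gamma_j,\tilde\tau_j)=\overline{\Phi}_{(g,\pi)}(g,\pi)$ exactly. Schauder-type estimates for this semilinear constraint system, whose right-hand side lies in $C^{0,\alpha}_{-n-q_0}$ (inherited from the type of $(g,\pi)$), upgrade the regularity of the correction so that $(\tilde\gamma_j,\tilde\tau_j)$ remains of type $(p,q,q_0,\alpha)$.

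Proposition~\ref{proposition:DEC} then yields DEC for each $(\tilde\gamma_j,\tilde\tau_j)$, and once $(\tilde\gamma_j,\tilde\tau_j)$ sits inside the prescribed $C^{2,\alpha}$-ball around $(g,\pi)$ the PMI hypothesis gives $E(\tilde\gamma_j,\tilde\tau_j)\ge|P(\tilde\gamma_j,\tilde\tau_j)|$; passing to the limit using continuity of $E$ and $P$ in the Sobolev topology produces $E(\gamma,\tau)\ge|P(\gamma,\tau)|$. The main difficulty is the $C^{2,\alpha}$-control: a mollification of a merely Sobolev function need not remain bounded in $C^{2,\alpha}$ as the mollification scale shrinks, so the radius of $U$ must be calibrated against the $C^{2,\alpha}$-radius of the PMI ball and the mollification scale via the embedding $W^{2,p}_{-q}\hookrightarrow C^{1,1-n/p}_{-q}$ together with a standard quantitative estimate for mollifiers, with $\epsilon_j$ chosen to shrink slowly relative to $\|(\gamma-g,\tau-\pi)\|_{W^{2,p}_{-q}\times W^{1,p}_{-1-q}}$ so that all $(\tilde\gamma_j,\tilde\tau_j)$ lie inside the PMI ball.
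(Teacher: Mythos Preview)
Your overall strategy---mollify to get smooth data, correct back onto the level set of $\overline{\Phi}_{(g,\pi)}$, invoke Proposition~\ref{proposition:DEC} for DEC, apply the PMI hypothesis, and pass to the limit by continuity of $E,P$---is exactly the skeleton of the paper's proof. The gap is in the regularity step, and it is not a technicality.

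You produce the corrections $(\delta\gamma_j,\delta\tau_j)$ from a generic bounded right inverse of $D\overline{\Phi}_{(g,\pi)}$ and then claim that ``Schauder-type estimates for this semilinear constraint system'' promote $(\tilde\gamma_j,\tilde\tau_j)$ to type $(p,q,q_0,\alpha)$. But $\overline{\Phi}_{(g,\pi)}$ maps pairs of symmetric $2$-tensors to a (function, vector field), so the equation $\overline{\Phi}_{(g,\pi)}(\cdot)=\overline{\Phi}_{(g,\pi)}(g,\pi)$ is \emph{underdetermined}; there is no Schauder theory that upgrades an arbitrary $W^{2,p}_{-q}\times W^{1,p}_{-1-q}$ solution to $C^{2,\alpha}_{-q}\times C^{1,\alpha}_{-1-q}$. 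Without that upgrade $(\tilde\gamma_j,\tilde\tau_j)$ is not of type $(p,q,q_0,\alpha)$ and Definition~\ref{definition:inequality} cannot be invoked. Your final paragraph tries to get $C^{2,\alpha}$ control directly from the mollification via $W^{2,p}_{-q}\hookrightarrow C^{1,1-n/p}_{-q}$ and a slowly shrinking scale $\epsilon_j$, but that embedding gives no second-derivative H\"older control; as $\epsilon_j\to 0$ the $C^{2,\alpha}$ norm of the mollified perturbation can blow up regardless of how the scale is calibrated against a fixed $\|\gamma-g\|_{W^{2,p}_{-q}}$.

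The paper's fix is to \emph{not} use a generic right inverse. It parametrizes the correction by
\[
(u,Y,h,w)\ \longmapsto\ \big((1+u)^{\frac{4}{n-2}}\gamma_k+h,\ \tau_k+\mathcal L_{\gamma_k}Y+w\big),
\]
with $(h,w)$ ranging over a \emph{finite-dimensional} space $K$ of smooth compactly supported tensors chosen to complement the range of the conformal/Lie-derivative part (Lemma~\ref{lemma:isomorphism}, Corollary~\ref{corollary:isomorphism}). In these variables the modified constraint equation becomes a \emph{determined} quasilinear elliptic system for the scalar $u$ and the vector field $Y$ whose principal part is $\Delta_{\gamma_k}$; ordinary Schauder theory then legitimately gives $(u_k,Y_k)\in C^{2,\alpha}_{-q}$, hence $(\bar\gamma_k,\bar\tau_k)$ of type $(p,q,q_0,\alpha)$. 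This specific ansatz is the missing idea in your argument.
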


Before we prove the theorem, we need some preparatory results, and the proof of Theorem~\ref{theorem:inequality} is given after Corollary~\ref{corollary:isomorphism}.

The following lemma is used to solve the modified constraint equations. The proof adapts the argument in \cite[Theorem 1]{Corvino-Schoen:2006}. For a Riemannian metric $g$ and a vector field $Y$, we define
\[
\mathcal{L}_g Y = L_Y g - (\Div_g Y)g.
\]

\begin{lemma}\label{lemma:isomorphism}
Let $(M, g, \pi)$ be an initial data set with $(g-g_{\mathbb{E}}, \pi) \in C^{2}_{-q} \times C^{1}_{-1-q}$. 
Given a function~$u$, a vector field $Y$, a symmetric $(0,2)$-tensor $h$, and a symmetric $(2,0)$-tensor $w$ on $M$, we define
\[
	T(u, Y, h, w) := \overline{\Phi}_{(g,\pi)}((1+u)^{\frac{4}{n-2}} g+h, \pi + \mathcal{L}_g Y +w).
\]
There exists a subspace $W$ of pairs $(u,Y)\in W^{2,p}_{-q}$ and  a finite dimensional subspace  $K\subset W^{2,p}_{-q}\times W^{1,p}_{-1-q}$ of pairs $(h, w)\in C^\infty_c$ such that  
\[
T: W\times K \too L^p_{-2-q}
\]
  is a diffeomorphism from a neighborhood of $0$ in $W \times K$ onto a ball centered at $\overline{\Phi}_{(g,\pi)}(g, \pi)$ in $L^p_{-2-q}$. 
\end{lemma}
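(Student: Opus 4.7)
The plan is to apply the inverse function theorem in Banach spaces, after splitting the domain of $T$ into an analytic ``main'' piece $W$ and a finite-dimensional ``correction'' piece $K$ of compactly supported tensors. By the smoothness of $\overline{\Phi}_{(g,\pi)}$ asserted in Lemma~\ref{lemma:surjective} and the chain rule, the linearization of $T$ at the origin is
\[
DT(0,0,0,0)(u,Y,h,w) = D\overline{\Phi}_{(g,\pi)}\bigl(\tfrac{4}{n-2} u\,g + h,\; \mathcal{L}_g Y + w\bigr).
\]
I would first isolate the restricted operator
\[
P(u,Y) := D\overline{\Phi}_{(g,\pi)}\bigl(\tfrac{4}{n-2} u g,\; \mathcal{L}_g Y\bigr) : W^{2,p}_{-q}\times W^{2,p}_{-q} \longrightarrow L^p_{-2-q}.
\]
A direct inspection of \eqref{equation:constraint} shows that the principal part of $P$ couples $-\tfrac{4(n-1)}{n-2}\Delta_g u$ in the scalar slot with $\mathrm{div}_g \mathcal{L}_g Y = \Delta_g Y + \mathrm{Ric}_g(Y,\cdot)$ in the vector slot, with all cross-terms of lower order; so $P$ is a second-order elliptic system that reduces to a pair of scalar Laplacians at infinity. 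With the weights $q\in(\tfrac{n-2}{2}, n-2)$ avoiding the exceptional indicial set, weighted elliptic theory for asymptotically flat manifolds then yields that $P$ is Fredholm, with closed range of finite codimension and finite-dimensional kernel.

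Next I would construct $K$ using the surjectivity in Lemma~\ref{lemma:surjective}. Let $V\subset L^p_{-2-q}$ be any finite-dimensional subspace complementary to $\operatorname{range}(P)$. By surjectivity I can choose $(h_i,w_i)\in W^{2,p}_{-q}\times W^{1,p}_{-1-q}$ with $D\overline{\Phi}_{(g,\pi)}(h_i,w_i)$ a basis of $V$. Since transversality to a closed subspace of finite codimension is an open condition and $C^\infty_c$ is dense in $W^{2,p}_{-q}\times W^{1,p}_{-1-q}$, I can approximate each $(h_i,w_i)$ by a $C^\infty_c$ tensor without destroying the direct-sum decomposition $L^p_{-2-q} = \operatorname{range}(P) \oplus \operatorname{span}\{D\overline{\Phi}_{(g,\pi)}(h_i,w_i)\}$. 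Set $K = \operatorname{span}\{(h_i,w_i)\}\subset C^\infty_c$.

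For the analytic factor, I would take $W$ to be any closed topological complement of the finite-dimensional $\ker P$ inside $W^{2,p}_{-q}\times W^{2,p}_{-q}$. Then $P|_W$ is a continuous bijection onto the closed subspace $\operatorname{range}(P)$, hence an isomorphism by the open mapping theorem, and by construction of $K$ the full map $DT(0,0,0,0): W \oplus K \to L^p_{-2-q}$ is a Banach-space isomorphism. Since $T$ is smooth near the origin (Lemma~\ref{lemma:surjective} together with the smoothness of $u\mapsto (1+u)^{4/(n-2)} g$ and the linearity of $Y\mapsto \mathcal{L}_g Y$), the inverse function theorem produces the desired local diffeomorphism from a neighborhood of $0$ in $W\times K$ onto a ball centered at $\overline{\Phi}_{(g,\pi)}(g,\pi)$ in $L^p_{-2-q}$.

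The hardest step to justify carefully will be the Fredholm claim for $P$ given only that $(g-g_{\mathbb{E}},\pi)\in C^2_{-q}\times C^1_{-1-q}$: the zeroth-order coefficients of $P$ involve $\mathrm{Ric}_g$ and $\nabla\pi$, which by hypothesis lie only in $C^0_{-2-q}$. This regularity is, however, exactly the natural one (as pointed out in the remark following Lemma~\ref{lemma:surjective}), and it suffices both for the weighted $L^p$ elliptic estimates and for the unique continuation at infinity used to control $\ker P$ and its adjoint, as in \cite{Corvino-Schoen:2006}. Granting the Fredholm analysis, the splitting above and the Banach-space inverse function theorem complete the argument.
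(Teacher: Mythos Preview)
Your proposal is correct and follows essentially the same approach as the paper's proof: both argue that the restricted operator $P(u,Y)=D\overline{\Phi}_{(g,\pi)}(\tfrac{4}{n-2}ug,\mathcal{L}_g Y)$ is Fredholm (the paper phrases this as ``asymptotic to $\Delta_g$'' in the sense of Bartnik), take $W$ to be a complement of $\ker P$, use the surjectivity of $D\overline{\Phi}_{(g,\pi)}$ from Lemma~\ref{lemma:surjective} together with a $C^\infty_c$-density argument to build the finite-dimensional $K$ whose image complements $\operatorname{range}(P)$, and finish with the inverse function theorem. Your final paragraph slightly overstates what is needed---the Fredholm property of $P$ requires only the weighted elliptic estimates, not unique continuation; the latter is used for the surjectivity in Lemma~\ref{lemma:surjective}, which you are invoking as a black box anyway.
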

\begin{proof}
We define the map $P: W^{2,p}_{-q} \too L^p_{-2-q}$  by
\[
	P(v, Z) = D\overline{\Phi}_{(g, \pi)} (vg, \mathcal{L}_g Z). 
\]
By  \eqref{equation:modified} and \eqref{equation:constraint} (and substituting $(h,w)=(vg, \mathcal{L}_g Z)$ there), the map $P$ (after multiplying an appropriate  constant to the first component of $P$)  is asymptotic to $\Delta_g$  in the sense of \cite[Definition 1.5]{Bartnik:1986} and hence is Fredholm. Let $W$ be a subspace of $W^{2,p}_{-q}$ complementing to the kernel of $P$. 

Because $D\overline{\Phi}_{(g,\pi)}:W^{2,p}_{-q}\times W^{1,p}_{-1-q} \too L^p_{-2-q}$ is surjective by  Lemma~\ref{lemma:surjective}, there is a finite dimensional subspace $K\subset W^{2,p}_{-q}\times W^{1,p}_{-1-q}$, spanned by  linearly independent pairs of tensors $(\eta_1, \xi_1), \cdots, (\eta_N, \xi_N)$,  such that  the image of $K$ by $D\overline{\Phi}_{(g,\pi)}$ complements to the range of $P$, i.e.  $D\overline{\Phi}_{(g,\pi)}(K)\oplus \mathrm{range}(P) = L^p_{-2-q}$.  By smooth approximation, we may assume that all $(\eta_k, \xi_k)\in C^\infty_c$. 

For the map $T$ defined above, we compute its linearization  at $(u, Y, h, w) =0$:
\[
	DT|_{0}(v, Z, \eta, \xi) = D\overline{\Phi}_{(g, \pi)} (\tfrac{4}{n-2} v g, \mathcal{L}_g Z) + D\overline{\Phi}_{(g, \pi)} (\eta,\xi). 
\]
The linearization is an isomorphism by construction. The desired statement follows from inverse function theorem. 
\end{proof}

The following corollary is a direct consequence of the fact that  a linear operator that is sufficiently close (in the operator norm) to an isomorphism is also an isomorphism.

\begin{corollary}\label{corollary:isomorphism}
Let $(M, g, \pi)$ be an initial data set with $(g-g_{\mathbb{E}}, \pi) \in C^{2}_{-q} \times C^{1}_{-1-q}$ and  $W, K$  the corresponding function spaces  defined as in Lemma~\ref{lemma:isomorphism}. For an initial data set $(\gamma,\tau)\in \mathscr{M}^{2,p}_{-q}\times W^{1,p}_{-1-q}$, we define the map  $T_{(\gamma,\tau)}: W \times K \too L^p_{-2-q}$ by
\[
	T_{(\gamma,\tau)}(u, Y, h, w) := \overline{\Phi}_{(g,\pi)}((1+u)^{\frac{4}{n-2}} \gamma+h, \tau + \mathcal{L}_\gamma Y +w).
\]
Then there is $ \delta>0, C_1>0$ and an open ball $U$ centered at $(g, \pi)$ in $W^{2,p}_{-q}\times W^{1,p}_{-1-q}$ such that for each $(\gamma, \tau)\in U$, the map $T_{(\gamma,\tau)}$ is a diffeomorphism from a neighborhood  $B$ of $0$ in $W\times K$ onto the open ball centered at $\overline{\Phi}_{(g,\pi)}(g, \pi)$ of radius $\delta$ in $L^p_{-2-q}$, and, for all $(u, Y, h, w)\in B$, 
\[
	\|(u, Y, h, w)\|_{W\times K} \le C_1 \| T_{(\gamma,\tau)}(u, Y, h, w) - T_{(\gamma,\tau)}(0) \|_{L^p_{-2-q}}.
\]

\end{corollary}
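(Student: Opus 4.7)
The corollary is a parametric/uniform version of the inverse function theorem, with $(\gamma,\tau)$ playing the role of parameter and Lemma~\ref{lemma:isomorphism} providing the base case $(\gamma,\tau)=(g,\pi)$. My plan proceeds in three steps.

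First, I verify joint smoothness of
\[
\bigl((\gamma,\tau),(u,Y,h,w)\bigr)\longmapsto T_{(\gamma,\tau)}(u,Y,h,w)
\]
in a neighborhood of $((g,\pi),0)$ inside $(\mathscr{M}^{2,p}_{-q}\times W^{1,p}_{-1-q})\times (W\times K)$ with values in $L^p_{-2-q}$. This is a composition of the smooth operator $\overline{\Phi}_{(g,\pi)}$ from Lemma~\ref{lemma:surjective} with the smooth ingredients $u\mapsto (1+u)^{4/(n-2)}$ and $(\gamma,Y)\mapsto\mathcal{L}_\gamma Y$, all controlled via Sobolev multiplication in the range $p>n$. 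Differentiating at $(u,Y,h,w)=0$ yields
\[
DT_{(\gamma,\tau)}\big|_{0}(v,Z,\eta,\xi)=D\overline{\Phi}_{(g,\pi)}\big|_{(\gamma,\tau)}\!\bigl(\tfrac{4}{n-2}v\gamma+\eta,\ \mathcal{L}_\gamma Z+\xi\bigr),
\]
which at $(\gamma,\tau)=(g,\pi)$ coincides with the isomorphism furnished by Lemma~\ref{lemma:isomorphism}.

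Second, I show that $DT_{(\gamma,\tau)}|_{0}$ depends continuously on $(\gamma,\tau)$ in the operator-norm topology on bounded linear maps $W\times K\to L^{p}_{-2-q}$. Using the explicit formulas \eqref{equation:modified}--\eqref{equation:constraint} together with the Sobolev embedding $W^{2,p}_{-q}\hookrightarrow C^{1,\alpha'}_{-q}$ (valid because $p>n$), this reduces to standard multiplication and composition estimates on the coefficients built from $\gamma$, $\gamma^{-1}$, and their derivatives up to order two, and similarly from $\tau$. Because the set of invertible bounded operators between two Banach spaces is open and the inversion map is continuous on it, there is an open neighborhood $U_0$ of $(g,\pi)$ on which $DT_{(\gamma,\tau)}|_{0}$ remains an isomorphism with $\|(DT_{(\gamma,\tau)}|_{0})^{-1}\|$ uniformly bounded.

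Third, I invoke the quantitative inverse function theorem: if $F\colon B_r\subset X\to Y$ is $C^1$ with $DF|_{0}$ invertible and $\sup_{\|x\|\le r}\|DF|_x-DF|_{0}\|\le (2\|DF|_{0}^{-1}\|)^{-1}$, then $F$ restricts to a diffeomorphism from $B_r$ onto an open set containing the ball of radius $r/(2\|DF|_{0}^{-1}\|)$ centered at $F(0)$, with $\|x\|\le 2\|DF|_{0}^{-1}\|\,\|F(x)-F(0)\|$ throughout. Shrinking $U_0$ using the joint smoothness of Step~1 to obtain a common modulus of continuity for $DT_{(\gamma,\tau)}$ near $0$, and applying the above with $F=T_{(\gamma,\tau)}$, produces uniform constants $r,\delta_0,C_1$ such that each $T_{(\gamma,\tau)}$ is a diffeomorphism from a neighborhood of $0$ in $B_r$ onto the ball of radius $\delta_0$ centered at $T_{(\gamma,\tau)}(0)$ with the stated Lipschitz estimate. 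Since $T_{(\gamma,\tau)}(0)=\overline{\Phi}_{(g,\pi)}(\gamma,\tau)$ is continuous in $(\gamma,\tau)$ and equals $\overline{\Phi}_{(g,\pi)}(g,\pi)$ at the base point, a final shrinking of $U_0$ to $U$ guarantees $\|T_{(\gamma,\tau)}(0)-\overline{\Phi}_{(g,\pi)}(g,\pi)\|_{L^p_{-2-q}}<\delta_0/2$, and setting $\delta=\delta_0/2$ yields the required ball centered at $\overline{\Phi}_{(g,\pi)}(g,\pi)$. The main technical point I expect is the operator-norm continuity in Step~2: the coefficients of $DT_{(\gamma,\tau)}|_{0}$ are nonlinear in $\gamma$ (notably through $\gamma^{-1}$), so careful bookkeeping of Sobolev multiplication estimates in weighted spaces is required; once that is in place, Steps~1 and~3 are routine Banach-space calculus.
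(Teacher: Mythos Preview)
Your proposal is correct and follows essentially the same route as the paper: the paper also argues that $DT_{(\gamma,\tau)}|_0$ is close in operator norm to the isomorphism $DT_{(g,\pi)}|_0$, invokes the inverse function theorem with uniform constants (there phrased via uniform bounds on $\|DT_{(\gamma,\tau)}\|$ and $\|D^2T_{(\gamma,\tau)}\|$), and then recenters the image ball at $\overline{\Phi}_{(g,\pi)}(g,\pi)$ by shrinking $U$. Your write-up is somewhat more explicit about the quantitative inverse function theorem and the Sobolev bookkeeping, but the logic is the same.
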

\begin{proof}
Observe that our notation says that $T_{(g,\pi)}= T$ where $T$ is the map defined in Lemma~\ref{lemma:isomorphism}.  For $U$ sufficiently small, the linearization of $T_{(\gamma, \tau)} $ at $0$  is close (in the operator norm) to the linearization of $T$ at $0$, and hence $\left.DT_{(\gamma, \tau)}\right|_{0}$ is also an isomorphism. 

By inverse function theorem, $T_{(\gamma,\tau)}$ is a diffeomorphism from a neighborhood of $0$ onto a ball centered at  $T_{(\gamma, \tau)}(0) = \overline{\Phi}_{(g,\pi)}(\gamma,\tau)$ of radius $2\delta$, which contains the ball centered at $\overline{\Phi}_{(g,\pi)}(g, \pi)$ of radius $\delta$,  for $(\gamma,\tau)$ sufficiently close to $(g, \pi)$.  Note that $\delta$ depends only on the upper bounds of $\|DT_{(\gamma, \tau)}\|$ and $\|D^2 T_{(\gamma, \tau)}\|$. Since there is a uniform bound on $\|DT_{(\gamma, \tau)}\|$ and $\|D^2 T_{(\gamma, \tau)}\|$ for $(\gamma, \tau)$ in a sufficiently small neighborhood $U$, this radius $\delta$ can be chosen to be uniform in $(\gamma,\tau)$. The desired estimate follows from the fact that the inverse map $T^{-1}_{(\gamma,\tau)}$ is  locally Lipschitz with a uniform Lipschitz constant for $(\gamma, \tau)\in U$. 
\end{proof}

We now prove the main result of this section. 

\begin{proof}[Proof of Theorem~\ref{theorem:inequality}]
We first outline the proof. We will approximate $(\gamma, \tau)$ by initial data sets $(\bar{\gamma}_k, \bar{\tau}_k)$ of H\"older regularity and with the dominant energy condition. By hypothesis, positivity of the ADM energy-momentum  for $(\bar{\gamma}_k, \bar{\tau}_k)$ holds. Then the desired ADM energy-momentum positivity for $(\gamma, \tau)$ follows from continuity of the ADM energy-momentum. 

The main point is to construct $(\bar{\gamma}_k, \bar{\tau}_k)$ that satisfies the dominant energy condition. Let $U\subset W^{2,p}_{-q}\times W^{1,p}_{-1-q}$ be the ball centered at $(g, \pi)$ from Corollary~\ref{corollary:isomorphism}, and let $(\gamma, \tau)\in U$. By smooth approximation, there is a sequence of $C^\infty_{\mathrm{loc}}$ initial data sets $(\gamma_k, \tau_k) \in U$  and $(\gamma_k, \tau_k) \to (\gamma, \tau)$ in $W^{2,p}_{-q}\times W^{1,p}_{-1-q}$. 

Applying Corollary~\ref{corollary:isomorphism} for $(\gamma_k, \tau_k)$, we find $(u_k, Y_k, h_k, w_k)\in W\times K$ such  that 
\begin{align}\label{equation:smooth-approximation}
	\overline{\Phi}_{(g,\pi)}((1+u_k)^{\frac{4}{n-2}} \gamma_k + h_k, \tau_k + \mathcal{L}_{\gamma_k} Y_k + w_k) =\overline{ \Phi}_{(g,\pi)} (g, \pi)
\end{align}
and
\[
	\| (u_k, Y_k, h_k, w_k )\|_{W\times K} \le C_1\|\overline{ \Phi}(g, \pi)-\overline{ \Phi}(\gamma_k, \tau_k) \|_{L^p_{-2-q}}.
\]
The  assumption $\overline{ \Phi}(\gamma, \tau)=\overline{ \Phi}(g, \pi)$ implies that 
\[
	\| (u_k, Y_k, h_k, w_k )\|_{W\times K}\to 0 \mbox{ as } k\to \infty. 
\]
Denote 
\[
	\bar{\gamma}_k = (1+u_k)^{\frac{4}{n-2}} \gamma_k + h_k\quad \mbox{ and } \quad \bar{\tau}_k= \tau_k + \mathcal{L}_{\gamma_k} Y_k + w_k.
\]
We have shown that $\overline{\Phi}_{(g,\pi)}(\bar{\gamma}_k, \bar{\tau}_k) =\overline{ \Phi}_{(g,\pi)}(g, \pi)$ and $(\bar{\gamma}_k, \bar{\tau}_k)\to (\gamma, \tau)$ in $W^{2,p}_{-q}\times W^{1,p}_{-1-q}$. By Proposition~\ref{proposition:DEC}, we see that $(\bar{\gamma}_k, \bar{\tau}_k)$ satisfies the dominant energy condition for $k$ sufficiently large. (We may further shrink $U$ to ensure $|\gamma-g|_g < 3$.)

By shrinking $\alpha$ if necessary, we  assume $\alpha \in (0, 1-\frac{n}{p}]$. We will show that $(\bar{\gamma}_k-g_{\mathbb{E}}, \bar{\tau}_k)\in C^{2,\alpha}_{-q}\times C^{1,\alpha}_{-1-q}$. Since $(\gamma_k, \tau_k)$ and $(h_k, w_k)$ are smooth with the appropriate fall-off rates, it suffices to show that $(u_k, Y_k) \in C^{2,\alpha}_{-q}$. Equation \eqref{equation:smooth-approximation} is a quasi-linear elliptic PDE system of $(u_k, Y_k)$ where the terms with top order derivatives are $\Delta_{\gamma_k } u_k$ and $\Delta_{\gamma_k} Y_k$ (after multiplying a factor of an appropriate power of $(1+u_k)$ to the equations). By Sobolev embedding, $(u_k, Y_k)\in C^{1,\alpha}_{-q}$. Then it is direct to see the terms with lower order derivatives in the PDE system are in $C^{0,\alpha}_{-2-q}$. That is, $(u_k, Y_k)$  satisfies $(n+1)$ Poisson equations $\Delta_{\gamma_k} (u_k, Y_k) \in C^{0,\alpha}_{-2-q}$. Standard elliptic regularity implies the desired H\"older regularity for $(u_k, Y_k)$. 

Thus $(\bar{\gamma}_k, \bar{\tau}_k)$ is regular enough to guarantee that $E(\bar{\gamma}_k, \bar{\tau}_k) \ge |P(\bar{\gamma}_k, \bar{\tau}_k)|$. Since $(\bar{\gamma}_k, \bar{\tau}_k)$ converges to $(\gamma,\tau)$ in $W^{2,p}_{-q}\times W^{1,p}_{-1-q}$ with $\overline{\Phi}_{(g,\pi)}(\bar{\gamma}_k, \bar{\tau}_k) = \overline{\Phi}_{(g,\pi)}(\gamma, \tau)$, using the continuity of the ADM energy-momentum (see, e.g. \cite[Proposition 19]{Eichmair-Huang-Lee-Schoen:2016}), we conclude that $E(\gamma, \tau) \ge |P(\gamma, \tau)|$. 

\end{proof}

In the rest of this section, we include a deformation result of independent interest. This result,   first proven in \cite[Theorem 22]{Eichmair-Huang-Lee-Schoen:2016} by linear approximation, was an important step to obtain harmonic asymptotics in that paper. Here we provide an alternative proof by using the modified constraint operator. The deformation result has general applications, although note that it is not used elsewhere in the current paper.

\begin{theorem}\label{theorem:strict}
Let $(M, g, \pi)$ be asymptotically flat of type $(p, q, q_0, \alpha)$ with  mass and current densities $(\mu, J)$. There exists $\lambda_0>0, C_1>0$ such that for each $0< \lambda < \lambda_0$, there exists an initial data set $(\bar{g}, \bar{\pi})$ of the same type with $\|(\bar{g}, \bar{\pi} )- (g, \pi) \|_{W^{2,p}_{-q} \times W^{1,p}_{-1-q}} < C_1 \lambda$ such that 
\[
	\bar{\mu}  > (1+\lambda) |\bar{J}|_{\bar{g}} + (1+\lambda) (\mu - |J|_g)
\]
where $(\bar{\mu}, \bar{J})$ are the mass and current densities of $(\bar{g}, \bar{\pi})$. As a consequence, if $\mu\ge |J|_g$, then $(\bar{g}, \bar{\pi})$ satisfies the strict dominant energy condition with 
\[
	\bar{\mu} -  |\bar{J}|_{\bar{g}} > \lambda |\bar{J}|_{\bar{g}}.
\]
\end{theorem}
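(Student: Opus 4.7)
The plan is to solve the modified constraint equation with a small positive perturbation placed only in the scalar (energy) slot while keeping the vector slot fixed. Since Proposition~\ref{proposition:DEC}'s computation (in particular \eqref{equation:DEC}) shows that preserving the vector slot of $\overline{\Phi}_{(g,\pi)}$ forces $|\bar J|_{\bar g}\le |J|_g$, the positive bump I inject into the scalar slot will directly deliver the strict improvement of the dominant energy condition.

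Concretely, first fix a smooth positive function $\sigma$ on $M$ decaying like $(1+|x|^2)^{-(n+q_0)/2}$, so that $\sigma\in C^{0,\alpha}_{-n-q_0}$. For $\lambda>0$ set $\phi_\lambda:=\lambda(\mu+\sigma)\in C^{0,\alpha}_{-n-q_0}\subset L^p_{-2-q}$, with norm $O(\lambda)$. For $\lambda$ below some threshold $\lambda_0$, Corollary~\ref{corollary:isomorphism} applied at $(\gamma,\tau)=(g,\pi)$ produces a small $(u_\lambda,Y_\lambda,h_\lambda,w_\lambda)\in W\times K$ with
\begin{equation*}
\overline{\Phi}_{(g,\pi)}\!\left((1+u_\lambda)^{\frac{4}{n-2}} g + h_\lambda,\ \pi + \mathcal{L}_g Y_\lambda + w_\lambda\right) = \overline{\Phi}_{(g,\pi)}(g,\pi) + (2\phi_\lambda,\,0),
\end{equation*}
and $\|(u_\lambda,Y_\lambda,h_\lambda,w_\lambda)\|_{W\times K}\le C\lambda$. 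Writing $\bar g,\bar\pi$ for the two slots on the left, I get $\|(\bar g,\bar\pi)-(g,\pi)\|_{W^{2,p}_{-q}\times W^{1,p}_{-1-q}}\le C_1\lambda$ and, for $\lambda$ small, $|\bar g-g|_g<3$.

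The verification is then algebraic. Comparing the components of $\overline{\Phi}_{(g,\pi)}$ gives $\bar\mu=(1+\lambda)\mu+\lambda\sigma$ and $\bar J^i=J^i-\tfrac12(h\cdot J)^i$ with $h:=\bar g-g$, exactly as in the proof of Proposition~\ref{proposition:DEC}, so \eqref{equation:DEC} yields $|\bar J|_{\bar g}^2\le |J|_g^2$. Then
\begin{equation*}
\bar\mu - (1+\lambda)|\bar J|_{\bar g} - (1+\lambda)(\mu-|J|_g) = \lambda\sigma + (1+\lambda)\bigl(|J|_g-|\bar J|_{\bar g}\bigr) \;\ge\; \lambda\sigma \;>\; 0,
\end{equation*}
which is the main inequality. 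The advertised consequence $\bar\mu-|\bar J|_{\bar g}>\lambda|\bar J|_{\bar g}$ then follows by discarding the nonnegative term $(1+\lambda)(\mu-|J|_g)$ under the extra assumption $\mu\ge|J|_g$.

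The step I expect to be the main obstacle is confirming that $(\bar g,\bar\pi)$ is of type $(p,q,q_0,\alpha)$ rather than merely Sobolev regular. I would upgrade $(u_\lambda,Y_\lambda)\in W\subset W^{2,p}_{-q}$ by the same bootstrap used in the proof of Theorem~\ref{theorem:inequality}: since $h_\lambda,w_\lambda\in C^\infty_c$ and the inhomogeneity $(2\phi_\lambda,0)$ lies in $C^{0,\alpha}_{-n-q_0}\subset C^{0,\alpha}_{-2-q}$, the modified constraint equation becomes a quasilinear elliptic system for $(u_\lambda,Y_\lambda)$ with principal part $\Delta_g$. Sobolev embedding first gives $C^{1,\alpha}_{-q}$ regularity, all lower-order terms then land in $C^{0,\alpha}_{-2-q}$, and standard Schauder estimates push the solution into $C^{2,\alpha}_{-q}$. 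This yields $(\bar g-g_{\mathbb E},\bar\pi)\in C^{2,\alpha}_{-q}\times C^{1,\alpha}_{-1-q}$, and $\bar\mu,\bar J\in C^{0,\alpha}_{-n-q_0}$ (the cross term $h\cdot J$ decays faster than $|x|^{-n-q_0}$). Aside from this regularity step, the content of the theorem is essentially forced by the algebraic shape of the modified constraint operator.
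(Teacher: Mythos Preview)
Your proof is correct and follows essentially the same route as the paper: solve the modified constraint equation with the perturbation $(\lambda(\mu+\sigma),0)$ in the scalar slot via the local diffeomorphism of Lemma~\ref{lemma:isomorphism}/Corollary~\ref{corollary:isomorphism}, invoke the computation~\eqref{equation:DEC} to get $|\bar J|_{\bar g}\le |J|_g$, and bootstrap $(u_\lambda,Y_\lambda)$ to $C^{2,\alpha}_{-q}$ via Schauder estimates to recover the type $(p,q,q_0,\alpha)$. The only cosmetic difference is your explicit factor of~$2$ in $(2\phi_\lambda,0)$, which is in fact the correct normalization given that $\Phi=(2\mu,J)$.
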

\begin{proof}
By Lemma~\ref{lemma:isomorphism}, the map $T: W\times K\too L^p_{-2-q}$ defined by 
\[
	T(u, Y, h, w) := \overline{\Phi}_{(g,\pi)}((1+u)^{\frac{4}{n-2}} g+h, \pi + \mathcal{L}_g Y +w)
\]
is a diffeomorphism from an open neighborhood of $(u, Y, h, w)=0$ to an open ball centered at $\overline{\Phi}(g, \pi)$ of some radius $\delta>0$. 

 Given a smooth function $\phi >0$ with $|\phi(x) |\le |x|^{-n-q_0}$ outside a compact subset of $M$, there exists a positive number $\lambda_0$ such that
\[
 \lambda_0 \left(\|  \phi \|_{L^p_{-2-q}} + \| \mu\|_{L^p_{-2-q} } \right) < \delta. 
\]
Since $T$ is a local diffeomorphism, there is a constant $C_1>0$ such that for each $0< \lambda < \lambda_0$, there is $(u, Y, h, w)$ that satisfies
\begin{align} \label{equation:approximation}
	\overline{\Phi}_{(g, \pi)}((1+u)^{\frac{4}{n-2}} g+h, \pi + \mathcal{L}_g Y +w)=\overline{\Phi}_{(g,\pi)} (g, \pi) +( \lambda (\mu + \phi), 0)
\end{align}
with 
\[
	\| (u, Y, h, w)\|_{W\times K} \le C_1  \| \lambda (\mu + \phi)\|_{L^p_{-2-q} } \le C_1 \lambda.
\]

We define $\bar{g} = (1+u)^{\frac{4}{n-2}} g+h$ and $\bar{\pi}=\pi + \mathcal{L}_g Y +w$. By applying elliptic regularity to the quasi-linear equations \eqref{equation:approximation} of $(u, Y)$  (just as in the proof of Theorem~\ref{theorem:inequality}), we have $(u, Y)\in C^{2,\alpha}_{-q}$ and thus one can directly verify that $(\bar{g}, \bar{\pi})$ is of type $(p, q, q_0, \alpha)$. It remains to show the desired inequality. Equation \eqref{equation:approximation} implies 
\begin{align*}
	\bar{\mu} &= (1+\lambda) \mu + \lambda \phi  \\
	\bar{J}^i +\tfrac{1}{2} g^{ij} \gamma_{jk} J^k&=J^i +\tfrac{1}{2} g^{ij} g_{jk} J^k.
\end{align*}
Compute as in \eqref{equation:DEC}, we obtain
\[
	|\bar{J}|_{\bar{g}} \le |J|_g
\]
provided $\lambda_0$ sufficiently small so that $|\bar{g} - g|< 3$. We now conclude
\[
	\bar{\mu}  - (1+\lambda) |\bar{J}|_{\bar{g}} > (1+\lambda) (\mu - |J|_g ).
\]
\end{proof}

\section{Main argument}\label{section:variational}


We introduce a modification of the classical Hamiltonian defined by Regge and Teitelboim~\cite{Regge-Teitelboim:1974} (see also  \cite[Section 5]{Bartnik:2005}) by employing the modified constraint operator in place of the usual constraint operator.
 \begin{definition}\label{definition:functional}
 Let $(M, g, \pi)$ be asymptotically flat of type $(p, q, q_0, \alpha)$.  Let $a\in \mathbb{R}$ and $b\in \mathbb{R}^n$. Let $(f_0, X_0)$ be a pair of a function and a vector field on $M$ (which we will often call a \emph{lapse-shift pair}) such that $(f_0, X_0)$ is smooth and is equal to $(a,b)$ in the exterior coordinate chart for $M\setminus K$. 

We define the \emph{modified Regge-Teitelboim Hamiltonian} $\mathcal{H} :\mathscr{M}^{2,p}_{-q}\times W^{1,p}_{-1-q} \too \mathbb{R} $ corresponding to $(g,\pi)$ and $(f_0, X_0)$ by 
\begin{align}\label{equation:functional}
	\mathcal{H} (\gamma, \tau) =(n-1)\omega_{n-1} \left[2 a E(\gamma,\tau) + b \cdot P(\gamma, \tau)\right] - \int_M \overline{\Phi}_{(g,\pi)}(\gamma, \tau) \cdot (f_0, X_0)\, d\mu_g
\end{align}
where the volume measure $d\mu_g$ and the inner product in the integral are both with respect to~$g$.
\end{definition}

Although two terms in the expression given above are not individually well-defined for arbitrary $(\gamma, \tau) \in \mathscr{M}^{2,p}_{-q}\times W^{1,p}_{-1-q}$ (because the corresponding integrals may not converge), it is well-known that the functional $\mathcal{H}$ described above extends to all of $\mathscr{M}^{2,p}_{-q}\times W^{1,p}_{-1-q}$ in a natural way. 
We simply use the following alternative expression by rewriting the  ADM energy-momentum surface integrals as volume integrals via divergence theorem and rearranging terms:
\begin{align}\label{equation:H}
\begin{split}
	\mathcal{H} (\gamma, \tau) &=\int_M \left[\left(\Div_g [\Div_{g_{\mathbb{E}}} \gamma - d(\mathrm{tr}_{g_{\mathbb{E}}} \gamma) ], \Div_g \tau\right)- \Phi(\gamma, \tau)- \left(0, \tfrac{1}{2} \gamma \cdot J\right) \right] \cdot (f_0, X_0)\, d\mu_g\\
	&\quad + \int_M \left( [\Div_{g_\mathbb{E}} \gamma - d(\mathrm{tr}_{g_{\mathbb{E}}} \gamma) ], \tau \right)\cdot (\nabla f_0, \nabla X_0) \, d\mu_g
	\end{split}
\end{align}
where recall that $g_\mathbb{E}$ is a background metric equal to the Euclidean one on the exterior coordinate chart.
The second integral is finite because $|\nabla f_0|, |\nabla X_0|= O(|x|^{-1-q})$. Asymptotic flatness of $(g,\pi)$ implies that $J=\Div_{g} \pi$ is integrable. Meanwhile the integrability of $\left(\Div_g [\Div_{g_{\mathbb{E}}} \gamma - d(\mathrm{tr}_{g_{\mathbb{E}}} \gamma) ], \Div_g \tau\right)- \Phi(\gamma, \tau)$ is a standard fact, which can be verified by writing out the expression in the exterior coordinate chart and using the assumed decay rates. The point is that the first term matches the top-order part of 
$\Phi(\gamma, \tau)$ and the other terms decay fast enough to ensure integrability.


We compute the first variation of the functional (Cf. {\cite[Theorem 5.2]{Bartnik:2005}}). 

\begin{lemma}\label{lemma:Hamiltonian}
 Let $(M, g, \pi)$ be asymptotically flat initial data set of type $(p, q, q_0, \alpha)$. Let $a\in \mathbb{R}$ and $b\in \mathbb{R}^n$, and let $(f_0, X_0)$ be a smooth lapse-shift pair such that $(f_0, X_0) = (a,b)$ on the exterior coordinate chart for $M\setminus K$. 

Let $\mathcal{H}:\mathscr{M}^{2,p}_{-q}\times W^{1,p}_{-1-q} \too \mathbb{R} $ be the modified Regge-Teitelboim Hamiltonian corresponding to $(g,\pi)$ and $(f_0, X_0)$. Then $\mathcal{H}$ is differentiable at $(g,\pi)$ with derivative given by
\[
	 D\mathcal{H}\big|_{(g,\pi)} (h, w) =- \int_M (h, w) \cdot (D\overline{\Phi}_{(g,\pi)})^*(f_0, X_0) \, d\mu_g
\]
for all $(h,w) \in W^{2,p}_{-q}\times W^{1,p}_{-1-q} $. 
\end{lemma}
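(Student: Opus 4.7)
The plan is to differentiate the volume-integral expression \eqref{equation:H} under the integral sign at $(\gamma,\tau)=(g,\pi)$. Each integrand is a smooth polynomial in $\gamma,\tau$ and their derivatives (up to second order in $\gamma$, first order in $\tau$), and the weighted Sobolev decay together with the embedding $W^{2,p}_{-q}\hookrightarrow C^{1,\alpha}_{-q}$ for $p>n$ makes the passage from Gateaux to Fr\'echet differentiability a routine Taylor expansion with dominated-convergence remainder. The content of the lemma is then to identify this derivative with $-\int_M(h,w)\cdot(D\overline{\Phi}_{(g,\pi)})^*(f_0,X_0)\,d\mu_g$, which I will do by integration by parts on a large ball $B_R\subset M$ followed by a limit $R\to\infty$.

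Writing $Q(\gamma,\tau)=(\mathrm{div}_g[\mathrm{div}_{g_{\mathbb{E}}}\gamma-d(\mathrm{tr}_{g_{\mathbb{E}}}\gamma)],\mathrm{div}_g\tau)$ and $B(\gamma,\tau)=([\mathrm{div}_{g_{\mathbb{E}}}\gamma-d(\mathrm{tr}_{g_{\mathbb{E}}}\gamma)],\tau)$, so that $\mathcal{H}=\int\{Q-\overline{\Phi}_{(g,\pi)}\}\cdot(f_0,X_0)\,d\mu_g+\int B\cdot(\nabla f_0,\nabla X_0)\,d\mu_g$ by \eqref{equation:H}, I obtain
\[
D\mathcal{H}(h,w)=\int_M\bigl\{DQ(h,w)-D\overline{\Phi}_{(g,\pi)}(h,w)\bigr\}\cdot(f_0,X_0)\,d\mu_g+\int_M DB(h,w)\cdot(\nabla f_0,\nabla X_0)\,d\mu_g.
\]
Since $DQ=(\mathrm{div}_g\cdot,\mathrm{div}_g\cdot)$ applied to $DB$, an integration by parts of $DQ\cdot(f_0,X_0)$ on $B_R$ produces a bulk term $-\int_{B_R}DB\cdot(\nabla f_0,\nabla X_0)$, which cancels the second integral, leaving only a surface integral on $\partial B_R$. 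Separately, the formal-adjoint identity applied on $B_R$ gives
\[
-\int_{B_R}D\overline{\Phi}_{(g,\pi)}(h,w)\cdot(f_0,X_0)\,d\mu_g=-\int_{B_R}(h,w)\cdot(D\overline{\Phi}_{(g,\pi)})^*(f_0,X_0)\,d\mu_g-\mathcal{B}_R,
\]
where $\mathcal{B}_R$ is a boundary integral on $\partial B_R$ whose leading-order part (from the second-order terms $L_g h$ and $\mathrm{div}_g w$ in $D\overline{\Phi}$) matches, modulo $O(|x|^{-n-q_0})$ errors, the surface integral from the first step. The two boundary integrals cancel for each $R$, and the remaining subleading boundary contributions vanish as $R\to\infty$, yielding the stated formula.

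The main obstacle is the careful bookkeeping on $\partial B_R$: since $q$ is allowed to be strictly less than $n-2$, individual surface integrals such as $\oint_{\partial B_R}[\mathrm{div}_{g_{\mathbb{E}}}h-d(\mathrm{tr}_{g_{\mathbb{E}}}h)]\cdot\nu$ may fail to converge as $R\to\infty$, so the cancellation must be verified \emph{pointwise} on $\partial B_R$ before passing to the limit rather than afterward. Subleading contributions---the lower-order pieces of $D\Phi^*$, the Ricci and $\pi$-quadratic terms, and the correction $\tfrac{1}{2}X_0\odot J$ in \eqref{equation:modified-adjoint}---all involve factors that decay strictly faster than $|x|^{-(n-1)}$, so their surface integrals over $\partial B_R$ tend to zero as $R\to\infty$ by the fall-off rates in Definition~\ref{definition:AF}. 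With these routine estimates in hand the proof reduces to the pointwise boundary-cancellation identity just described, which is the natural analog of \cite[Theorem~5.2]{Bartnik:2005} for the modified constraint operator.
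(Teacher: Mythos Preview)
Your proposal is correct and follows essentially the same route as the paper's proof: linearize the volume-integral expression~\eqref{equation:H}, then integrate by parts on a large ball so that the surface terms arising from $(D\overline{\Phi}_{(g,\pi)})^*$ cancel against those from the ADM-type divergence term, with only fast-decaying remainders surviving. Your presentation is slightly more explicit than the paper's---separating the $Q$-integration-by-parts from the formal-adjoint step and stressing that the boundary cancellation must be checked pointwise on $\partial B_R$ rather than after taking the limit---but the underlying mechanism is identical.
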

\begin{proof}
The argument is essentially the same as in \cite[Theorem 5.2]{Bartnik:2005} for the usual Regge-Teitelboim Hamiltonian, but we summarize the computation here for the sake of completeness. Differentiability of $\mathcal{H}$ comes from local boundedness of $\mathcal{H}$ and the polynomial structure of the integrand.  To derive the linearization, we linearize \eqref{equation:H} and have, for all $(h,w) \in W^{2,p}_{-q}\times W^{1,p}_{-1-q}$,
\begin{align}\label{equation:linearization-H}
\begin{split}
	D\mathcal{H}|_{(g,\pi)} (h, w)  &=\int_M \left[\left(\Div_g [\Div_{g_{\mathbb{E}}} h - d(\mathrm{tr}_{g_{\mathbb{E}}} h) ], \Div_g  w\right)- D\overline{\Phi}_{(g,\pi)}(h, w) \right] \cdot (f_0, X_0)\, d\mu_g\\
	&\quad + \int_M \left( [\Div_{g_\mathbb{E}} h - d(\mathrm{tr}_{g_{\mathbb{E}}} h) ], w\right)\cdot (\nabla f_0, \nabla X_0) \, d\mu_g.
	\end{split}
\end{align}


By the definition of the $L^2$ adjoint operator and the divergence theorem, we obtain
\begin{align*}
D\mathcal{H}|_{(g,\pi)} (h, w)  &= 
\lim_{r\to\infty}\left\{-\int_{|x|<r} (h, w) \cdot (D\overline{\Phi}_{(g,\pi)})^*(f_0, X_0)\, d\mu_g \right.\\
&\qquad \qquad \left.+\int_{|x|=r} \left[ (\Div_{g_{\mathbb{E}}} h - d(\mathrm{tr}_{g_{\mathbb{E}}} h), w)\cdot (f_0, X_0) - B\right]_i \nu^i\, d\mathcal{H}^{n-1}\right\}
\end{align*}
where $B$ is the boundary integrand that arises from taking the adjoint of $D\overline{\Phi}_{(g,\pi)}$. The upshot is that $B$ equals $(\Div_{g_{\mathbb{E}}} h - d(\mathrm{tr}_{g_{\mathbb{E}}} h), w)\cdot (f_0, X_0)$ modulo terms that decay fast enough so that the boundary integral above vanishes as $r\to\infty$.


\end{proof}

Now, we assume that $(g, \pi)$  satisfies the dominant energy condition and $E=|P|$. We would like to show that $(g,\pi)$ locally minimizes its corresponding modified Regge-Teitelboim Hamiltonian over its $\overline{\Phi}_{(g,\pi)}$ level set, which gives rise to an asymptotically translational lapse-shift pair lying in the kernel of $(D\overline{\Phi}_{(g,\pi)})^*$.

\begin{theorem}\label{theorem:nontrivial-kernel}
Let $(M, g, \pi)$ be asymptotically flat of type $(p, q, q_0, \alpha)$ satisfying the dominant energy condition. Assume that the  positive mass inequality holds near $(g, \pi)$.  If $E = |P|$, then there exists a lapse-shift pair $(f, X)\in C^{2,\alpha}_{\mathrm{loc}}(M)$ solving 
\begin{align*}
&(D\overline{\Phi}_{(g,\pi)})^*(f, X)=0 \quad \mbox{ in } M\\
&(f, X)= (E, -2P) + O^{2,\alpha}(|x|^{-q}). 
\end{align*}
\end{theorem}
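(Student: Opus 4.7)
The plan is to realize $(g,\pi)$ as a constrained local minimizer of the modified Regge--Teitelboim Hamiltonian $\mathcal{H}$ with an appropriate choice of asymptotic data, and to extract $(f,X)$ as a Lagrange multiplier.

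First, fix a smooth lapse-shift pair $(f_0, X_0)$ with $(f_0, X_0) = (E, -2P)$ on the exterior coordinate chart, and let $\mathcal{H}$ be the associated Hamiltonian from Definition~\ref{definition:functional}. The case $E=0$ is already the conclusion, so assume $E>0$. By Theorem~\ref{theorem:inequality}, there is an open ball $U$ around $(g,\pi)$ in $W^{2,p}_{-q}\times W^{1,p}_{-1-q}$ such that every $(\gamma,\tau)\in U$ with $\overline{\Phi}_{(g,\pi)}(\gamma,\tau)=\overline{\Phi}_{(g,\pi)}(g,\pi)$ satisfies $E(\gamma,\tau)\ge|P(\gamma,\tau)|$. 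On this constraint level set the integral term of $\mathcal{H}$ matches its value at $(g,\pi)$, so the hypothesis $E=|P|$ and Cauchy--Schwarz yield
\begin{align*}
\mathcal{H}(\gamma,\tau)-\mathcal{H}(g,\pi) &= 2(n-1)\omega_{n-1}\bigl[E\cdot E(\gamma,\tau)-P\cdot P(\gamma,\tau)\bigr] \\
&\ge 2(n-1)\omega_{n-1}\,E\bigl[E(\gamma,\tau)-|P(\gamma,\tau)|\bigr]\ge 0,
\end{align*}
so $(g,\pi)$ is a local minimizer of $\mathcal{H}$ subject to the constraint.

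Next, we invoke the Lagrange multiplier theorem in Banach spaces. By Lemma~\ref{lemma:surjective}, $D\overline{\Phi}_{(g,\pi)}\colon W^{2,p}_{-q}\times W^{1,p}_{-1-q}\to L^p_{-2-q}$ is surjective, so the constraint is locally a smooth Banach submanifold and the minimization forces $D\mathcal{H}|_{(g,\pi)}$ to vanish on $\ker D\overline{\Phi}_{(g,\pi)}$. Combined with the formula $D\mathcal{H}|_{(g,\pi)}(h,w)=-\int(h,w)\cdot(D\overline{\Phi}_{(g,\pi)})^*(f_0,X_0)\,d\mu_g$ from Lemma~\ref{lemma:Hamiltonian}, surjectivity produces a multiplier $(\lambda_1,\lambda_2)$ in the dual of $L^p_{-2-q}$ such that, in the distributional sense,
\[
(D\overline{\Phi}_{(g,\pi)})^*(f_0+\lambda_1,\,X_0+\lambda_2)=0 \quad \text{on } M.
\]
Setting $(f,X):=(f_0+\lambda_1,X_0+\lambda_2)$ delivers the kernel equation, and reduces the problem to establishing the regularity and decay of $(\lambda_1,\lambda_2)$.

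For the final step, $(D\overline{\Phi}_{(g,\pi)})^*$ is a linear second-order elliptic system whose coefficients involve $g$, $\pi$, $\mathrm{Ric}(g)$, $J$, and $\nabla\pi$, all of the decay dictated by asymptotic flatness. Rewriting the kernel equation as $(D\overline{\Phi}_{(g,\pi)})^*(\lambda_1,\lambda_2)=-(D\overline{\Phi}_{(g,\pi)})^*(f_0,X_0)$ and noting that $(f_0,X_0)$ is constant on the end while the coefficients decay, the right-hand side lies in $C^{0,\alpha}_{-2-q}$. Interior elliptic regularity then promotes $(f,X)$ to $C^{2,\alpha}_{\mathrm{loc}}$, and a Fredholm analysis of the asymptotically Euclidean operator $(D\overline{\Phi}_{(g,\pi)})^*$ on weighted H\"older spaces (together with the Sobolev-to-H\"older embedding of Remark~\ref{remark:inclusion}) places $(\lambda_1,\lambda_2)$ in $C^{2,\alpha}_{-q}$, producing the expansion $(f,X)=(E,-2P)+O^{2,\alpha}(|x|^{-q})$. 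The main obstacle will be this final regularity and decay step: the abstract Lagrange multiplier lives only in a dual weighted space a priori, so upgrading it to $C^{2,\alpha}_{-q}$ demands a careful indicial analysis of $(D\overline{\Phi}_{(g,\pi)})^*$ at infinity, verifying that $-q$ is not an exceptional rate and that the source lies in the range of the appropriate Fredholm realization; once this compatibility is secured, the bootstrap to the stated regularity and decay is standard.
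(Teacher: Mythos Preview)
Your proposal is correct and follows essentially the same route as the paper: constrained minimization of the modified Regge--Teitelboim Hamiltonian on the level set of $\overline{\Phi}_{(g,\pi)}$, the Cauchy--Schwarz estimate $E\,E(\gamma,\tau)-P\cdot P(\gamma,\tau)\ge E(E(\gamma,\tau)-|P(\gamma,\tau)|)\ge 0$, the Lagrange multiplier $(f_1,X_1)\in (L^p_{-2-q})^*=L^{p^*}_{-n+2+q}$, and then elliptic regularity to upgrade to $C^{2,\alpha}_{-q}$.

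Two small remarks. First, your aside that ``the case $E=0$ is already the conclusion'' conflates this theorem with the main theorem; here the conclusion is the existence of $(f,X)$, and when $E=|P|=0$ the zero pair suffices, so the case split is harmless but unnecessary (the minimization argument works uniformly). Second, for the regularity step the paper does \emph{not} run a Fredholm or indicial analysis of the overdetermined operator $(D\overline{\Phi}_{(g,\pi)})^*$ directly. Instead it derives from the adjoint system a determined elliptic system $\Delta_g(f,X)=\text{(lower order)}$ with source in $C^{0,\alpha}_{-2-q}$, applies interior Schauder estimates with an interpolation to get $C^{2,\alpha}_{\mathrm{loc}}$ from $L^{p^*}_{\mathrm{loc}}$, and then uses the isomorphism $\Delta_g:C^{2,\alpha}_{-q}\to C^{0,\alpha}_{-2-q}$ to obtain the decay. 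This is exactly the content of the paper's Proposition~\ref{proposition:regularity}, and it sidesteps the issue you flag about exceptional weights for the adjoint operator itself.
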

\begin{proof}  
Let $(f_0, X_0)$ be a smooth lapse-shift pair such that $(f_0, X_0) = (E, -2P)$ on the exterior coordinate chart for $M\setminus K$, where $(E, P)$ denotes the ADM energy-momentum of $(g,\pi)$. Let $\mathcal{H}:\mathscr{M}^{2,p}_{-q}\times W^{1,p}_{-1-q} \too \mathbb{R}$ be the modified Regge-Teitelboim Hamiltonian corresponding to $(g,\pi)$ and $(f_0, X_0)$. 

Define
\[
	\mathcal{C}_{(g,\pi)} = \left\{ (\gamma, \tau) \in \mathscr{M}^{2,p}_{-q}\times W^{1,p}_{-1-q} : \overline{\Phi}_{(g,\pi)}(\gamma, \tau) = \overline{\Phi}_{(g,\pi)}(g, \pi)\right\}.
\]
We claim that that $(g,\pi)$ is a local minimizer of $\mathcal{H}$  in $\mathcal{C}_{(g,\pi)}$. Recall the definition of $\mathcal{H}$ in \eqref{equation:functional} with $(a, b) = (E, -2P)$. Note that  $\overline{\Phi}_{(g,\pi)}(\gamma, \tau)$ is integrable for $(\gamma, \tau) \in \mathcal{C}_{(g, \pi)}$, and thus the two terms in the functional $\mathcal{H}$ are individually well-defined.  It is clear that the integral term in the functional has the same value for all $(\gamma, \tau) \in \mathcal{C}_{(g,\pi)}$. It suffices to show that the local minimum of the ADM energy-momentum term is zero and is realized by $(g,\pi)$. By Proposition~\ref{proposition:DEC}, the Sobolev version of the positive mass inequality (Theorem~\ref{theorem:inequality}) applies to show that
 \[E(\gamma, \tau) \ge |P(\gamma,\tau)|\] 
 for any $(\gamma, \tau)$ in a neighborhood of $(g,\pi)$ in $\mathcal{C}_{(g,\pi)}$.  We compute
\[
	E E(\gamma,\tau) - P \cdot P(\gamma, \tau) \ge E  E(\gamma,\tau) - |P||P(\gamma,\tau)| = E( E(\gamma,\tau) - |P(\gamma, \tau)|) \ge 0
\]
with equality at $(g,\pi)$, thus establishing our claim. 

In other words, $(g,\pi)$ locally minimizes $\mathcal{H}$ constrained to a level set of $\overline{\Phi}_{(g,\pi)}$. Using surjectivity of $D\overline{\Phi}_{(g,\pi)}$ (Lemma~\ref{lemma:surjective}), we can now apply the method of Lagrange multipliers (Theorem~\ref{theorem:Lagrange}) to see that there exists $(f_1, X_1)\in (L^p_{-2-q})^* = L^{p^*}_{-n+2+q}$ where $p^* = p/(p-1)$, such that for all $(h,w)\in W^{2,p}_{-q}\times W^{1,p}_{-1-q} $,
\[
		\left. D\mathcal{H} \right|_{(g, \pi)} (h, w)  =\int_M (f_1, X_1) \cdot D\overline{\Phi}_{(g,\pi)}(h, w) \, d\mu_g.
\]
Replacing the left hand side with the formula of the derivative of $\mathcal{H}$ in Lemma~\ref{lemma:Hamiltonian}, we obtain, for all $(h,w)\in W^{2,p}_{-q}\times W^{1,p}_{-1-q} $,
\[
	-\int_M (h,w) \cdot (D\overline{\Phi}_{(g,\pi)})^* (f_0, X_0)  \, d\mu_g= \int_M (f_1, X_1) \cdot D\overline{\Phi}_{(g,\pi)}(h, w) \, d\mu_g.
\]
In particular, the above identity holds for  $(h,w)\in C^\infty_c$. It means that $(f_1, X_1)\in  L^{p^*}_{-n+2+q}$ weakly solves 
\[
-(D\overline{\Phi}_{(g,\pi)})^*(f_0, X_0) = (D\overline{\Phi}_{(g,\pi)})^*(f_1, X_1).
\]
Our asymptotic assumptions imply that $(D\overline{\Phi}_{(g,\pi)})^*(f_0, X_0) \in  C^{0,\alpha}_{-2-q}\times C^{1,\alpha}_{-1-q}$, and therefore we can invoke elliptic regularity (Proposition~\ref{proposition:regularity}) to conclude  that $(f_1, X_1)\in C^{2,\alpha}_{-q}$. Setting $(f, X) =(f_0, X_0) + (f_1, X_1)$ gives us the desired statement.  
\end{proof}


To complete the proof of Theorem~\ref{theorem:main}, we use the following theorem, which is a corollary of Beig and Chru\'sciel's Theorem 3.4 in \cite{Beig-Chrusciel:1996}. 
\begin{theorem}\label{theorem:kernel}
Let $(M, g, \pi)$ be  asymptotically flat of type $(p, q, q_0, \alpha)$. Suppose the ADM energy-momentum $E=|P|$.  Let $(f,X)$ solve $(D\overline{\Phi}_{(g,\pi)})^*(f, X)=0$ in  $M$ with $(f, X)= (E, -2P) +O^{2,\alpha}(|x|^{-q})$. Then $E=|P|=0$. 
\end{theorem}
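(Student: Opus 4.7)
The overall plan is to reduce the claim to the rigidity theorem of Beig and Chru\'sciel for Killing initial data (KID) pairs with asymptotically translational behavior. Their result requires the lower-order part of $(f,X)$ to decay past the critical borderline rate $|x|^{2-n}$, so the heart of the argument is an asymptotic refinement step followed by the invocation of that rigidity theorem.

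For the refinement, I would decompose $(f, X) = (f_0, X_0) + (f_1, X_1)$, where $(f_0, X_0)$ is a smooth lapse-shift pair equal to the constant $(E, -2P)$ outside a compact set and $(f_1, X_1) \in C^{2,\alpha}_{-q}$ by hypothesis. Inserting this decomposition into $(D\overline{\Phi}_{(g,\pi)})^*(f,X) = 0$ and using formula \eqref{equation:modified-adjoint}, the pair $(f_1, X_1)$ satisfies a linear elliptic system whose principal symbol is asymptotic to the Euclidean Laplacian and whose lower-order coefficients are built from $\mathrm{Ric}(g)$, $\pi$, $\nabla\pi$, and $J$; the inhomogeneity comes from applying the adjoint operator to the constant extension and therefore decays at the rates of those coefficients. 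Weighted Schauder theory on $\mathbb{R}^n \setminus B$, iterated so long as the improving decay rates avoid the indicial root of the flat Laplacian, produces successive gains. The extra hypothesis $q + \alpha > n - 2$ is precisely what permits the bootstrap to traverse the critical indicial root $2-n$ in H\"older spaces, yielding $(f_1, X_1) = O^{2,\alpha}(|x|^{2-n-\delta})$ for some $\delta > 0$; this is the promised Lemma~\ref{lemma:strong-decay}.

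With the sharpened expansion in hand, I would appeal to Beig--Chru\'sciel's Theorem~3.4, whose argument is essentially dimension-independent and applies once the tail of $(f,X)$ beats $|x|^{2-n}$. The modification of the adjoint by the term $(\tfrac12 X \odot J, 0)$ from \eqref{equation:modified-adjoint} has coefficient decay $|J| = O(|x|^{-n-q_0})$ and is absorbed into the lower-order terms already handled in their analysis. The core of their result is a divergence identity for KIDs which, upon integration over $M$ and use of the divergence theorem with the now-sharpened asymptotics, equates a bulk integrand of definite sign (controlled by $\mu - |J|_g \ge 0$) to an explicit boundary term at infinity. Evaluating that boundary term against the leading behavior $(E,-2P)$ and imposing $E = |P|$ forces both contributions to vanish, and the rigidity built into the identity yields $E = |P| = 0$.

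The principal obstacle is the asymptotic refinement. The constant extension $(E,-2P)$ is not an exact solution of the adjoint equations on the flat background, so merely subtracting it does not trivialize the problem; and since $q \in (\tfrac{n-2}{2}, n-2)$ may place the initial decay rate on the wrong side of the indicial root $2-n$, a single application of weighted elliptic estimates is insufficient. The iteration must actually cross that threshold, and the strict inequality $q + \alpha > n - 2$ is exactly what the H\"older-weighted Schauder framework needs in order to avoid the logarithmic obstruction at the indicial root; this is the arithmetic reason for the extra hypothesis in Definition~\ref{definition:AF}.
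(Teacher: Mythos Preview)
Your proposal contains a genuine gap at the asymptotic refinement step, and it mischaracterizes both Lemma~\ref{lemma:strong-decay} and the Beig--Chru\'sciel argument.

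First, the bootstrap you describe cannot yield $(f_1,X_1)=O^{2,\alpha}(|x|^{2-n-\delta})$. The indicial root $2-n$ is a genuine obstruction: the actual expansion (derived in the paper as Theorem~\ref{theorem:asymptotics}) is
\[
f = E - \Bigl(E^2 + \tfrac{1}{n-2}|P|^2\Bigr)|x|^{2-n} - \tfrac{1}{n-1}P_k\phi_{,k} + O^{2,\alpha}(|x|^{2-n-q_1}),
\]
with an analogous formula for $X$. There is a \emph{nonzero} harmonic term at order $|x|^{2-n}$ whose coefficient is built from $E$ and $P$, together with further terms $\phi_{,k}, V_{i,k}\in C^{2,\alpha}_{-q}$ coming from potentials for $\mathrm{tr}_0\pi$ and the metric. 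No amount of weighted Schauder iteration will push the tail past $|x|^{2-n}$; the hypothesis $q+\alpha>n-2$ is \emph{not} used to cross the indicial root. The whole content of the theorem lies in analyzing these explicit $|x|^{2-n}$ coefficients.

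Second, Lemma~\ref{lemma:strong-decay} is not an elliptic bootstrap lemma for $(f_1,X_1)$. In the paper it is a pure analysis statement: if a vector field $Y\in C^{1,\alpha}_{1-q}(\mathbb{R}^n\setminus B)$ satisfies $\mathrm{div}'Y = -2E\rho^4|x|^{-n-2}+v$ with $v\in C^{0,\alpha}_{2-n-q_1}$, then $E=0$. The hypothesis $q+\alpha>n-2$ enters here, to control a difference-quotient flux integral on $x_n$-slices.

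Third, your invocation of Beig--Chru\'sciel is not accurate. Their argument (reproduced in full in the paper's appendix) is \emph{not} a divergence identity with a bulk term of definite sign controlled by $\mu-|J|_g$; indeed Theorem~\ref{theorem:kernel} does not assume the dominant energy condition at all. The actual mechanism is purely asymptotic: one writes out the adjoint equations in the exterior chart, uses the expansion above and the hypothesis $E=|P|$ to combine \eqref{equation:f} with the $\partial_n$-derivative of \eqref{equation:X} so that the $\pi$ terms cancel, rewrites the Ricci term via \eqref{equation:Ricci}, and arrives at $\Delta'\omega_{AB}=\tfrac{2(n-1)}{n-2}E\,\partial_A\partial_B|x|^{2-n}+\text{l.o.t.}$ for a tensor $\omega_{AB}$ built from $g$ and the potentials $V_i$. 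A suitable contraction $Y_B$ then satisfies exactly the divergence equation of Lemma~\ref{lemma:strong-decay}, forcing $E=0$.
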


We provide a complete proof of Theorem~\ref{theorem:kernel} in Appendix~\ref{section:KID}. Our proof adapts the original argument of  \cite{Beig-Chrusciel:1996} except that we derive general expansions for $(f,X)$, which may have other applications.  (See Theorem~\ref{theorem:asymptotics} and Corollary~\ref{corollary:asymptotics}.) 

\begin{proof}[Proof of Theorem~\ref{theorem:main}]

The variational argument in Theorem~\ref{theorem:nontrivial-kernel} produces the lapse-shift pair $(f, X)$ that satisfies the hypotheses of Theorem~\ref{theorem:kernel}. We then conclude $E=|P|=0$. 
\end{proof}

\appendix

\section{Asymptotically Killing lapse-shift pair}\label{section:KID}

In this section, we prove Theorem~\ref{theorem:kernel}, originally due to  Beig and Chru\'sciel in \cite[Section III]{Beig-Chrusciel:1996}. We extend the argument to a slightly more general statement in Theorem~\ref{theorem:zero} below. 

\begin{definition}\label{defin:Killing}
Let $(M, g, \pi)$ be asymptotically flat of type $(p, q, q_0, \alpha)$, not necessarily vacuum. We say that a lapse-shift pair $(f,X)$ defined on the exterior region $M\setminus K$ is \emph{asymptotically vacuum Killing initial data} for $(g,\pi)$ if
\[ 
D\Phi|_{(g,\pi)}^*(f, X) \in C^{0,\alpha}_{-n-q_0} \times C^{1,\alpha}_{-1-2q}.
\]
Furthermore we say that $(f,X)$ is \emph{asymptotically translational} if there exists $a\in \mathbb{R}$ and $b\in \mathbb{R}^n$ such that 
\[
(f, X) =  (a,b)+O^{2,\alpha}(|x|^{-q}).
\]
In this case we say that $(f,X)$ \emph{is asymptotic to} $(a,b)$.
\end{definition}

The main goal of this section is to prove the following theorem, of which Theorem~\ref{theorem:kernel} is an easy corollary.
\begin{thm}\label{theorem:zero}
Let $(M, g, \pi)$ be  asymptotically flat initial of type $(p, q, q_0, \alpha)$. Suppose the ADM energy-momentum $E=|P|$.  Let $(f,X)$ be asymptotically vacuum Killing initial data for $(g,\pi)$ that is asymptotic to some $(a,b)$, where $a\in \mathbb{R}$, $b\in \mathbb{R}^n$.  If $a$ and $b$ are not both zero, then $E=|P|=0$.
\end{thm}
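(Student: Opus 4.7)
The approach is to follow the strategy of Beig--Chru\'sciel \cite{Beig-Chrusciel:1996}, adapted to the slightly more general setting of an asymptotically (rather than exactly) vacuum KID. My first step is to derive refined asymptotic expansions of $(f, X)$ at infinity beyond the prescribed leading order $(a, b)$, by iterating the overdetermined elliptic system $D\Phi|^*_{(g,\pi)}(f, X) \in C^{0,\alpha}_{-n-q_0} \times C^{1,\alpha}_{-1-2q}$. In the asymptotic chart, the principal part of $D\Phi|^*_{(g,\pi)}$ decouples into a Hessian-type equation for $f$ and the conformal Killing operator on $X$; the $\pi$-dependent and curvature lower-order terms in \eqref{equation:adjoint} can be treated perturbatively because the right-hand side decays strictly faster than $|x|^{-(n-1)}$ under the hypotheses $q > (n-2)/2$ and $q_0 > 0$. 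Solving order by order in powers of $|x|^{-1}$, one extracts expansions whose $|x|^{-(n-2)}$ coefficients are explicit expressions involving $(E, P)$ and $(a, b)$.

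The second step is to derive a global integral identity by pairing $D\Phi|^*_{(g,\pi)}(f, X)$ with a suitable tensor field (naturally, a deformation representing $(g, \pi)$ itself) and applying the divergence theorem on $\{|x| \le r\}$. The interior contribution is controlled by the fast decay of the adjoint right-hand side, while the boundary flux integral at $|x| = r$ can be computed explicitly from the asymptotic expansions of the first step. As $r \to \infty$, this boundary integral converges to a linear combination of $aE$ and $b \cdot P$, structurally analogous to the boundary terms analyzed in the proof of Lemma~\ref{lemma:Hamiltonian}. Together with auxiliary identities extracted from other components of the adjoint equation (e.g.\ those arising from pairing against the momentum $\pi$), this yields a system of constraints relating $(a, b)$ to $(E, P)$ that lies at the heart of Beig--Chru\'sciel's argument.

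Combining these constraints with the hypothesis $E = |P|$ and the Cauchy--Schwarz bound $|b \cdot P| \le |b|\,|P| = |b| E$, the rigidity of the resulting algebraic system forces $E = |P| = 0$ whenever $(a, b) \neq 0$. The main obstacle is the careful extraction of the asymptotic expansions: the coupling between $f$ and $X$ through the $\pi$-dependent terms in $D\Phi|^*_{(g,\pi)}$ and the presence of Ricci curvature through $L_g^*f$ require delicate bookkeeping to unambiguously identify the order-$|x|^{-(n-2)}$ coefficients. This is precisely what requires the extra hypothesis $q + \alpha > n - 2$ from Definition~\ref{definition:AF}, as flagged in the remark after that definition. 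A secondary technical issue is matching sign and normalization conventions across the various boundary integrals so that the final algebraic inequality, once assembled, reduces cleanly to the conclusion $E = |P| = 0$.
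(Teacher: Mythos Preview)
Your first step is correct and matches the paper: one does extract asymptotic expansions of $(f,X)$ to order $|x|^{2-n}$, and in the process one obtains the algebraic constraint $b_i E = -2aP_i$ (this is the content of the paper's Theorem~\ref{theorem:asymptotics}). However, your second and third steps contain a genuine gap. The algebraic constraints relating $(a,b)$ to $(E,P)$ that emerge from the boundary flux computations are \emph{not} rigid enough to force $E=0$ under $E=|P|$. Indeed, $b_i E = -2aP_i$ together with $E=|P|$ is perfectly consistent with $E=|P|>0$ and $(a,b)$ proportional to $(E,-2P)$; no Cauchy--Schwarz argument closes this. The paper's Corollary~\ref{corollary:asymptotics} makes this explicit: when $E\neq 0$ and $a\neq 0$ one can only conclude proportionality, and the expansions~\eqref{equation:expansion} remain nontrivial.

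What the paper actually does after establishing the expansions is quite different from a global integral identity. One rotates so that $P$ points in the $x_n$-direction, combines the $f$-equation~\eqref{equation:f} with the $\partial_n$-derivative of the $X$-equation~\eqref{equation:X}; the crucial observation is that precisely when $E=|P|$ the $\pi$-terms cancel and one obtains an equation \eqref{equation:combination} involving only the metric. Substituting the refined expansions and using~\eqref{equation:Ricci} yields a Poisson equation $\Delta'\omega_{AB} = \tfrac{2(n-1)}{n-2}E\,\partial_A\partial_B|x|^{2-n} + O(|x|^{-n-q_1})$ where $\Delta'$ is the Laplacian in the first $n-1$ variables. One then constructs an auxiliary vector field $Y_B$ with $\mathrm{div}'Y = -2E\rho^4|x|^{-n-2} + O(|x|^{2-n-q_1})$, and a delicate slice-by-slice flux analysis (Lemma~\ref{lemma:strong-decay}, using difference quotients in $x_n$) shows this is impossible unless $E=0$. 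This analytic step, not any algebraic identity, is where the hypothesis $q+\alpha>n-2$ is actually used---so your attribution of that hypothesis to the expansion step is also misplaced.
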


The proof is presented at the end of this section, after Corollary~\ref{corollary:asymptotics}.

All of our computations will take place in the exterior coordinate chart  $M\setminus K \cong \mathbb{R}^n \setminus B$, where $B$ is a closed unit ball centered at the origin in $\mathbb{R}^n$.  For notation, a comma in the subscript means ordinary differentiation in the coordinate chart (which is the same as covariant differentiation with respect to $g_\mathbb{E}$),
 and $\Delta_0, \mathrm{tr}_0, \Div_0$ are, respectively, the usual Euclidean Laplacian, trace, and divergence operators. We sum over repeated indices, unless otherwise indicated. 
 We also write $\int_{S_\infty} u\, d\mathcal{H}^{n-1}$ as shorthand for $\lim_{r\to\infty} \int_{|x|=r} u\, d\mathcal{H}^{n-1}$.
We start with some computational lemmas.

\begin{lemma}\label{lemma:basic}
Let $T_{ij}\in C^1_{\mathrm{loc}}(\mathbb{R}^n\setminus B)$ be a $2$-tensor. Then
\begin{align*}
 \int_{|x|=r}   T_{ij,j} \nu_i\, d\mathcal{H}^{n-1} = \int_{|x|=r}   T_{ji,j} \nu_i\, d\mathcal{H}^{n-1} .
\end{align*}
\end{lemma}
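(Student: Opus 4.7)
The plan is to reduce the claim to showing that, for an antisymmetric $C^1_{\mathrm{loc}}$ $(0,2)$-tensor, the analogous boundary integral vanishes. First, I would subtract the two sides and write the difference as
\[
 \int_{|x|=r} (T_{ij}-T_{ji})_{,j}\,\nu_i\, d\mathcal{H}^{n-1} = \int_{|x|=r} S_{ij,j}\,\nu_i\, d\mathcal{H}^{n-1},
\]
where $S_{ij}:= T_{ij}-T_{ji}$ is antisymmetric. Thus it suffices to show that the right-hand side is zero for every antisymmetric $S\in C^1_{\mathrm{loc}}(\mathbb{R}^n\setminus B)$.

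The key computation is to extend $\nu_i = x_i/|x|$ to $\mathbb{R}^n\setminus\{0\}$ and note that $\nu_{i,j} = (\delta_{ij}-\nu_i\nu_j)/|x|$ is symmetric in $i,j$, so by antisymmetry of $S$,
\[
 S_{ij}\,\nu_{i,j}=0 \qquad \text{on } \mathbb{R}^n\setminus\{0\}.
\]
Hence $S_{ij,j}\,\nu_i=(S_{ij}\,\nu_i)_{,j}$. Define the vector field $W^j := S_{ij}\,\nu_i$ on $\mathbb{R}^n\setminus B$. The antisymmetry of $S$ yields $W\cdot\nu = S_{ij}\,\nu_i\,\nu_j = 0$ \emph{identically}, so $W$ is tangent to every Euclidean sphere centered at the origin, not merely to $\{|x|=r\}$.

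Because $W\cdot\nu\equiv 0$ and $\partial_\nu\nu=0$, we have $\nabla_\nu W\cdot\nu = \partial_\nu(W\cdot\nu)-W\cdot\partial_\nu\nu=0$, and therefore on each sphere the Euclidean divergence of $W$ coincides with the intrinsic divergence of $W|_{S^{n-1}_r}$. Stokes' theorem on the closed manifold $S^{n-1}_r$ then gives
\[
 \int_{|x|=r} S_{ij,j}\,\nu_i\, d\mathcal{H}^{n-1} = \int_{|x|=r} (S_{ij}\,\nu_i)_{,j}\, d\mathcal{H}^{n-1} = \int_{S^{n-1}_r} \mathrm{div}_{S^{n-1}_r}(W)\, d\mathcal{H}^{n-1} = 0,
\]
completing the proof. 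The only subtle point is verifying that $W$ is tangent to concentric spheres \emph{in a neighborhood}, not just on one sphere; this is what allows the Euclidean and tangential divergences to agree and was the reason for extending $\nu$ off the sphere. Everything else is routine.
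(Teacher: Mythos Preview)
Your proof is correct and follows essentially the same approach as the paper: both set $S_{ij}=T_{ij}-T_{ji}$, observe that the vector field $W^j=S_{ij}\nu_i$ is tangent to the sphere (since $S_{ij}\nu_i\nu_j=0$ by antisymmetry), and use that $S_{ij}\nu_{i,j}=0$ (by symmetry of $\nu_{i,j}$) together with the divergence theorem on the closed sphere to conclude. The only difference is that you spell out more carefully why the ambient divergence $(S_{ij}\nu_i)_{,j}$ agrees with the intrinsic divergence on $S^{n-1}_r$, which the paper leaves as an implicit step.
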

\begin{proof}
The key observation is that $(T_{ij}-T_{ji})\nu_i$ is tangential to $|x|=r$, and thus the divergence theorem on the sphere tells us that
\begin{align*}
0 &=  \int_{|x|=r}\left[(T_{ij} - T_{ji})\nu_i\right]_{,j}\, d\mathcal{H}^{n-1} \\ 
&=\int_{|x|=r}(T_{ij,j} - T_{ji,j})\nu_i\, d\mathcal{H}^{n-1} + \int_{|x|=r} (T_{ij}-T_{ji}) \nu_{i,j}\, d\mathcal{H}^{n-1}\\
&= \int_{|x|=r}(T_{ij,j} - T_{ji,j})\nu_i\, d\mathcal{H}^{n-1},
\end{align*}
where the last equality follows from symmetry considerations. 
\end{proof}

\begin{corollary}\label{corollary:basic}
For any function $f\in C^2_{\mathrm{loc}}(\mathbb{R}^n\setminus B)$,
\begin{align*}
	\int_{|x|=r} (\Delta_0 f ) \nu_j\, d\mathcal{H}^{n-1}&=\int_{|x|=r} f_{,ij}  \nu_i\, d\mathcal{H}^{n-1}\quad \mbox{ for }j = 1, 2, \dots, n\\
	\int_{|x|=r} (x\cdot \nu) \Delta_0 f \, d\mathcal{H}^{n-1}&=\int_{|x|=r} (f_{,ij} x_j 
 + (n-1)f_{,i})
	\nu_i \, d\mathcal{H}^{n-1}.
\end{align*}
\end{corollary}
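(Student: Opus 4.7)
Both identities in Corollary~\ref{corollary:basic} are linear in $f$, involve only the flat background, and have the same structural shape as the symmetry statement of the preceding lemma. The plan, accordingly, is to deduce each of them from Lemma~\ref{lemma:basic} by choosing the auxiliary $2$-tensor $T_{ij}$ appropriately. Recall that the lemma asserts $\int_{|x|=r} T_{ij,j}\nu_i\,d\mathcal{H}^{n-1} = \int_{|x|=r} T_{ji,j}\nu_i\,d\mathcal{H}^{n-1}$ for any $T\in C^1_{\mathrm{loc}}(\mathbb{R}^n\setminus B)$, so all of the real content of the corollary lies in exhibiting two judicious $T$'s.

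For the first identity, I would fix an index $k\in\{1,\dots,n\}$ and set $T_{ij}=f_{,i}\,\delta_{jk}$. A one-line computation gives $T_{ij,j}=f_{,ik}$ (summed over $j$), while $T_{ji,j}=f_{,jj}\,\delta_{ik}=(\Delta_0 f)\,\delta_{ik}$. Plugging these into Lemma~\ref{lemma:basic} and contracting with $\nu_i$ immediately produces $\int_{|x|=r} f_{,ik}\nu_i\,d\mathcal{H}^{n-1}=\int_{|x|=r}(\Delta_0 f)\nu_k\,d\mathcal{H}^{n-1}$, which is the first claimed identity after renaming $k$ to $j$.

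For the second identity, I would instead take $T_{ij}=f_{,i}\,x_j$. Differentiating gives $T_{ij,j}=f_{,ij}x_j+n f_{,i}$ (using $\delta_{jj}=n$) and $T_{ji,j}=(\Delta_0 f)\,x_i+f_{,i}$. Lemma~\ref{lemma:basic} then yields
\[
\int_{|x|=r}\bigl(f_{,ij}x_j+n f_{,i}\bigr)\nu_i\,d\mathcal{H}^{n-1}=\int_{|x|=r}\bigl((\Delta_0 f)x_i+f_{,i}\bigr)\nu_i\,d\mathcal{H}^{n-1},
\]
and moving one copy of $\int f_{,i}\nu_i$ across and recognizing $x_i\nu_i=x\cdot\nu$ delivers the second claim with the announced coefficient $n-1$.

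There is no genuine obstacle: once the two choices $T_{ij}=f_{,i}\delta_{jk}$ and $T_{ij}=f_{,i}x_j$ are identified, both identities reduce to trivial applications of the product rule. All of the nontrivial work has already been absorbed into Lemma~\ref{lemma:basic}, which is where the tangential integration by parts on the sphere lives.
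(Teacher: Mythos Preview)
Your proof is correct and essentially identical to the paper's: the paper applies Lemma~\ref{lemma:basic} to $T_{ik}=f_{,k}\delta_{ij}$ for the first identity and to $T_{ij}=f_{,j}x_i$ for the second, which are just the transposes of your choices and lead to the same one-line computations.
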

\begin{proof}
Fixing $j$ and applying the previous lemma to $T_{ik} = f_{,k}\delta_{ij} $ gives the first equality. For the second equality, we set $T_{ij} = f_{,j}x_i$ and apply the previous lemma. 

\end{proof}



Throughout this section, we fix a number $q_1 \in (0, 1)$ such that 
\[
	n+ q_1\le  \min(n+q_0, 2+2q).
\] 
It will show up in the fall-off rates of error terms in many estimates.

\begin{lemma}
Let $(M, g, \pi)$ be an $n$-dimensional asymptotically flat initial data set of type $(p, q, q_0,\alpha)$,
 and let $(f,X)$ be asymptotically vacuum Killing initial data for $(g,\pi)$ that is asymptotic to some $(a,b)$, where $a\in \mathbb{R}$ and $b\in \mathbb{R}^n$. 
Then
\begin{align}
	 -( \Delta_0f) \delta_{ij} +  f_{,ij} - a R_{ij} + \tfrac{1}{2} b_k \pi_{ij, k}&= O^{0,\alpha}(|x|^{-n-q_1})\label{equation:f} \\
	X^i_{,j} + X^j_{,i}  + g_{ij,k}b_k - \tfrac{4}{n-1} a(\mathrm{tr}_0 \pi) \delta_{ij} + 4a \pi_{ij} &= O^{1,\alpha}(|x|^{-1-2q}).\label{equation:X}
\end{align}
 As a consequence, 
\begin{align}
	\Delta_0 f &= \tfrac{1}{2(n-1)} b_k (\mathrm{tr}_0 \pi)_{,k} + O^{0,\alpha}(|x|^{-n-q_1}) \label{equation:Laplacef}\\
	\Div_0X& = - \tfrac{1}{2} g_{ii,k}b_k  + \tfrac{2}{n-1} a(\mathrm{tr}_0 \pi)+ O^{1,\alpha}(|x|^{-1-2q}) \label{equation:divX}\\
	\Delta_0 X^i &=\left(\tfrac{1}{2} g_{jj,ki} - g_{ij,kj}\right) b_k+ \tfrac{2}{n-1} a(\mathrm{tr}_0\pi)_{,i} + O^{0,\alpha}(|x|^{-n-q_1}). \label{equation:LaplaceX}
\end{align}
\end{lemma}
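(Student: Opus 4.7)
My plan is to unpack the defining condition that $D\Phi|^*_{(g,\pi)}(f,X)$ lies in $C^{0,\alpha}_{-n-q_0}\times C^{1,\alpha}_{-1-2q}$, using formula \eqref{equation:adjoint}, and then replace the geometric operators (Hessian, Laplacian, Ricci tensor, Lie derivatives) by their Euclidean counterparts at the leading order, absorbing all deviations into the error terms of the claimed size $O^{0,\alpha}(|x|^{-n-q_1})$ and $O^{1,\alpha}(|x|^{-1-2q})$. The choice of $q_1\in(0,1)$ with $n+q_1\le\min(n+q_0,2+2q)$ is exactly what guarantees this absorption works uniformly.

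For \eqref{equation:f}, the symmetric $(0,2)$-tensor component of $D\Phi|^*_{(g,\pi)}(f,X)$ reads $L_g^*f+(\text{lower order in }\pi,f)+\tfrac12 g_{i\ell}g_{jm}(L_X\pi)^{\ell m}+\cdots$. I would expand $L_g^*f=-(\Delta_g f)g+\mathrm{Hess}_g f-f\,\mathrm{Ric}(g)$, using $f=a+O^{2,\alpha}(|x|^{-q})$ and $g-g_{\mathbb{E}}=O^{2,\alpha}(|x|^{-q})$ to identify the leading part $-(\Delta_0 f)\delta_{ij}+f_{,ij}-aR_{ij}$, with the differences (Christoffel corrections, $(g-g_{\mathbb{E}})\cdot\mathrm{Hess}\,f$, $(f-a)\mathrm{Ric}$) each of size $O(|x|^{-2-2q})$. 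The Lie derivative $\tfrac12 g_{i\ell}g_{jm}(L_X\pi)^{\ell m}$ expands via $(L_X\pi)^{ij}=X^k\pi^{ij}_{,k}-\pi^{kj}X^i_{,k}-\pi^{ik}X^j_{,k}$; using $X=b+O^{2,\alpha}(|x|^{-q})$ the leading piece is $\tfrac12 b_k\pi_{ij,k}$, with remainder $O(|x|^{-2-2q})$. All other terms in \eqref{equation:adjoint} $\bigl((\text{tr}_g\pi)\pi_{ij}f$, $\pi_{ik}\pi^k_j f$, $(\mathrm{div}_g X)\pi_{ij}$, $X_{k;m}\pi^{km}g_{ij}\bigr)$ are quadratic in $\pi$ or in $\nabla X$, hence $O(|x|^{-2-2q})$, while $g(X,J)g_{ij}=O(|x|^{-n-q_0})$. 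All of these are bounded by $|x|^{-n-q_1}$ by the choice of $q_1$, establishing \eqref{equation:f}.

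For \eqref{equation:X}, the symmetric $(2,0)$-tensor component of $D\Phi|^*_{(g,\pi)}(f,X)$ is $-\tfrac12(L_X g)^{ij}+\bigl(\tfrac{2}{n-1}(\mathrm{tr}_g\pi)g^{ij}-2\pi^{ij}\bigr)f$. Writing $(L_Xg)_{ij}=X^k g_{ij,k}+g_{kj}X^k_{,i}+g_{ik}X^k_{,j}$ and raising indices using $g^{ij}=\delta^{ij}+O(|x|^{-q})$, the leading contribution is $X^i_{,j}+X^j_{,i}+b_k g_{ij,k}$, with correction $O^{1,\alpha}(|x|^{-1-2q})$. Similarly, replacing $f$ by $a$ in the second group contributes $\tfrac{2}{n-1}a(\mathrm{tr}_0\pi)\delta^{ij}-2a\pi^{ij}$ up to $O^{1,\alpha}(|x|^{-1-2q})$. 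Multiplying by $-2$ yields \eqref{equation:X}.

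The three consequences \eqref{equation:Laplacef}, \eqref{equation:divX}, \eqref{equation:LaplaceX} follow by algebraic manipulations of \eqref{equation:f} and \eqref{equation:X}. Tracing \eqref{equation:f} with $\delta^{ij}$ gives $(1-n)\Delta_0 f - aR_g + \tfrac12 b_k(\mathrm{tr}_0\pi)_{,k}=O(|x|^{-n-q_1})$; here I would invoke the Hamiltonian constraint $R_g=2\mu-\tfrac{1}{n-1}(\mathrm{tr}_g\pi)^2+|\pi|_g^2=O(|x|^{-\min(n+q_0,2+2q)})$, which is absorbed in the error, yielding \eqref{equation:Laplacef}. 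Tracing \eqref{equation:X} with $\delta_{ij}$ gives \eqref{equation:divX} directly. For \eqref{equation:LaplaceX}, I would apply $\partial_j$ to \eqref{equation:X} (permissible since the right-hand side is $C^{1,\alpha}_{-1-2q}$, so differentiation produces $O^{0,\alpha}(|x|^{-2-2q})\subset O^{0,\alpha}(|x|^{-n-q_1})$), solve for $\Delta_0 X^i$, and substitute $(\mathrm{div}_0 X)_{,i}$ from \eqref{equation:divX}; the resulting $-4a\pi_{ij,j}$ term is handled by noting $\pi_{ij,j}=J^i-\Gamma\cdot\pi=O(|x|^{-n-q_1})$ via the asymptotic flatness assumption $J\in C^{0,\alpha}_{-n-q_0}$ together with $\Gamma\cdot\pi=O(|x|^{-2-2q})$.

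The main obstacle is purely bookkeeping: one must check that every cross term produced by lowering/raising indices with $g$ rather than $g_{\mathbb{E}}$, and every substitution of $(f,X)$ by $(a,b)$, loses at most one factor of $|x|^{-q}$ relative to an already-decaying quantity, so that the combined decay beats $|x|^{-n-q_1}$ or $|x|^{-1-2q}$. Once $q_1$ is fixed by the inequality $n+q_1\le\min(n+q_0,2+2q)$, this is automatic, so the proof is essentially a careful expansion with no substantive analytic difficulty.
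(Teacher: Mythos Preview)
Your proposal is correct and follows essentially the same approach as the paper: expand the adjoint formula \eqref{equation:adjoint}, replace the covariant operators by their Euclidean counterparts using the asymptotics of $(g,\pi)$ and $(f,X)$, and absorb all cross terms into the stated errors; the consequences then follow by tracing and differentiating. Your write-up is in fact more detailed than the paper's own proof, which is quite terse---in particular, your explicit justification that $aR_g = O(|x|^{-n-q_1})$ via the constraint identity $R_g = 2\mu - \tfrac{1}{n-1}(\mathrm{tr}_g\pi)^2 + |\pi|_g^2$ when tracing \eqref{equation:f}, and your observation that $\pi_{ij,j} = J^i + O(|x|^{-2-2q}) = O(|x|^{-n-q_1})$ when deriving \eqref{equation:LaplaceX}, make explicit steps the paper leaves to the reader.
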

\begin{proof}
The equations \eqref{equation:f} and \eqref{equation:X} come directly from using equation \eqref{equation:adjoint} to write out 
the statement that $D\Phi|_{(g,\pi)}^*(f, X) \in C^{0,\alpha}_{-n-q_0} \times C^{1,\alpha}_{-1-2q}$ and then using known asymptotics to simplify the expression, as well as the following equation:
\begin{align*}
	X^i_{;j} + X^j_{;i} &= X^i_{,j} + X^j_{,i} + \Gamma^i_{jk} X^k + \Gamma^j_{ik} X^k= X^i_{,j} + X^j_{,i}  + g_{ij,k}b_k + O^{1,\alpha}(|x|^{-1-2q}).
\end{align*}  
Taking the trace of \eqref{equation:f} and \eqref{equation:X} gives \eqref{equation:Laplacef} and \eqref{equation:divX}, respectively. Equation~\eqref{equation:LaplaceX} follows from differentiating \eqref{equation:X} with respect to $\partial_j$, substituting the divergence term by \eqref{equation:divX}, and using $\pi_{ij,j} \in C^{0,\alpha}_{-n-q_1}$.

\end{proof}

We can further express the next order terms in the expansion using the ADM energy-momentum.  Under the added assumption of harmonic coordinates, Beig and Chru\'{s}ciel obtained the following expansions for $f$ when $E=|P|$ and for $X$ (without the $E=|P|$ assumption)~\cite[Proofs of Proposition 3.1 and Theorem 3.4]{Beig-Chrusciel:1996}.
\begin{theorem}\label{theorem:asymptotics}
Let $(M, g, \pi)$ be asymptotically flat with  ADM energy-momentum vector $(E, P)$.  Let $(f, X)$ be asymptotically vacuum Killing initial data for $(g,\pi)$ that is  asymptotic to some $(a, b)$, where $a\in \mathbb{R}$ and $b \in \mathbb{R}^n$. Then the following expansion holds in $M\setminus K$:
\begin{align}\label{equation:expansion-alt}
\begin{split}
	f &= a + (-aE + \tfrac{1}{2(n-2)} b\cdot P) |x|^{2-n}+\tfrac{1}{2(n-1)} b_k \phi_{,k}+ O^{2,\alpha}(|x|^{2-n-q_1})\\
	X^i &=b_i -\tfrac{2(n-1)}{n-2} b_iE |x|^{2-n} + \tfrac{2}{n-1}  a\phi_{,i} + b_k V_{i,k} + O^{2,\alpha}(|x|^{2-n-q_1})
\end{split}
\end{align}
where $\phi, V_i\in C^{3,\alpha}_{1-q}$, $i=1,\dots, n$, satisfy the following equations in $M\setminus K$:
\begin{align}\label{equation:next-order}
	\begin{split}
	\Delta_0 \phi &= \mathrm{tr}_0 \pi\\
	\Delta_0 V_i &= \tfrac{1}{2} g_{jj,i} - g_{ij,j}\qquad \mbox{ for } i=1,\dots, n.
	\end{split}
\end{align}
Moreover, $b_i E =-2aP_i$. We also note 
\begin{align} \label{equation:Ricci}
	\Delta_0 (V_{i,j} + V_{j,i} + g_{ij} ) = -2 R_{ij} + O^{0,\alpha}(|x|^{-2-2q}). 
\end{align}
	
\end{theorem}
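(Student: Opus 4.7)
The plan follows Beig--Chru\'sciel's argument in~\cite{Beig-Chrusciel:1996}, adapted to non-vacuum data and general (not necessarily harmonic) coordinates. First, I would solve the Poisson equations \eqref{equation:next-order} for $\phi, V_i \in C^{3,\alpha}_{1-q}$ using the Fredholm theory for $\Delta_0$ on weighted H\"older spaces \cite{Bartnik:1986}; the right-hand sides $\mathrm{tr}_0\pi$ and $\tfrac{1}{2}g_{jj,i} - g_{ij,j}$ both lie in $C^{1,\alpha}_{-1-q}$, so such $\phi, V_i$ exist. The identity~\eqref{equation:Ricci} then follows from a direct computation: differentiate the $V_i$-equation, symmetrize in $i,j$, add $\Delta_0 g_{ij}$, and match against the standard coordinate expansion $2R_{ij} = -g_{ij,kk} - g_{kk,ij} + g_{ik,jk} + g_{jk,ik} + O(|x|^{-2-2q})$.

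Next, set
\[
\tilde f := f - a - \tfrac{1}{2(n-1)} b_k \phi_{,k}, \qquad \tilde X^i := X^i - b_i - \tfrac{2}{n-1}a\phi_{,i} - b_k V_{i,k}.
\]
Using \eqref{equation:Laplacef}, \eqref{equation:LaplaceX}, and the Poisson equations for $\phi, V_i$, a short calculation yields $\Delta_0 \tilde f$ and $\Delta_0 \tilde X^i$ both in $O(|x|^{-n-q_1})$ (integrable), while $\tilde f, \tilde X^i = O(|x|^{-q})$. The standard asymptotic expansion theorem for the Laplacian on $\mathbb{R}^n \setminus B$ then gives
\[
\tilde f = A\, |x|^{2-n} + O^{2,\alpha}(|x|^{2-n-q_1}), \qquad \tilde X^i = B_i\, |x|^{2-n} + O^{2,\alpha}(|x|^{2-n-q_1}),
\]
for constants $A, B_i$ characterized by the asymptotic flux $(2-n)\omega_{n-1} A = \lim_{r\to\infty} \int_{|x|=r} \partial_\nu \tilde f\, d\mathcal{H}^{n-1}$ (and analogously for $B_i$). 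This already establishes the shape of the expansions~\eqref{equation:expansion-alt}, modulo identifying these constants.

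The key remaining step is to match $A$ and $B_i$ with the claimed combinations of $(E, P, a, b)$. Working only from the trace \eqref{equation:Laplacef} is too weak (it leads to inner-boundary ambiguities), so I would use the full tensor identity \eqref{equation:f}. Contracting \eqref{equation:f} with $x^j$ and integrating over the coordinate sphere of radius $r$, Corollary~\ref{corollary:basic} converts the term $f_{,ij} - (\Delta_0 f)\delta_{ij}$ into a tangential combination that integrates in a controlled way, leaving a relation between $\int_{|x|=r}\partial_\nu f$ and integrals of $R_{ij}\nu^i\nu^j$ and $\pi_{ij,k}\nu^i\nu^j$. Via the asymptotic expansion of $R_{ij}$ and the ADM energy formula, the first integral produces the $-aE$ contribution to $A$; via the approximate divergence-freeness of $\pi$ (because $\mathrm{div}_g\pi = J$ is integrable) combined with the ADM momentum formula, the second produces the $\tfrac{1}{2(n-2)}b\cdot P$ contribution. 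Running the analogous computation for \eqref{equation:X} produces $B_i$ as a linear expression in $b_iE$ and $aP_i$, and the requirement that the two independent tensorial routes for $B_i$ give the same answer forces the compatibility $b_iE = -2aP_i$; this identity then collapses $B_i$ to $-\tfrac{2(n-1)}{n-2} b_i E$.

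The principal obstacle is this identification step. Careful bookkeeping of decay rates is essential: several of the individual surface integrals (for instance $\int_{|x|=r}\partial_\nu f$ and $\int_{|x|=r}\partial_\nu \phi_{,k}$) diverge, and only the weighted combination $\int \partial_\nu \tilde f$ has a finite limit. One must also invoke the near-conservation laws $\mathrm{div}_g\pi = J \in L^1$ and $\nabla^j R_{ij} = \tfrac{1}{2}\nabla_i R$ (contracted Bianchi) to convert Hessian-type surface integrals into ADM-type fluxes. Once these integral identities are established, the expansions~\eqref{equation:expansion-alt} and the compatibility $b_iE = -2aP_i$ follow at once.
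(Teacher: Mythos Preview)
Your outline is correct and follows the paper's approach closely: solve the Poisson equations~\eqref{equation:next-order}, subtract the correction terms, apply harmonic expansion to obtain undetermined constants $A, B_i$, and then identify these via flux integrals of the tensor equations~\eqref{equation:f} and~\eqref{equation:X}. Your treatment of $A$ (contracting~\eqref{equation:f} against $x_i\nu_j$, using the Ricci--ADM flux identity and the approximate conservation $\pi_{ik,k}=O(|x|^{-n-q_1})$) matches the paper exactly.

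The one place where your proposal is too vague to close is the computation of $B_i$. You write that ``the analogous computation for~\eqref{equation:X}'' produces $B_i$ as a linear combination of $b_iE$ and $aP_i$, with two independent routes whose comparison forces $b_iE=-2aP_i$. In the paper the two routes are not both flux computations on~\eqref{equation:X}. The first is a \emph{pointwise} comparison: compute $\mathrm{div}_0 X$ from the expansion of $X^i$ and match it against~\eqref{equation:divX}. For this to work one must first derive an expansion
\[
\mathrm{div}_0 V = \tfrac{1}{2}(n-g_{ii}) + \beta\,|x|^{2-n} + O^{2,\alpha}(|x|^{2-n-q_1})
\]
and identify $\beta=\tfrac{2(n-1)}{n-2}E$ by its own flux computation (tracing~\eqref{equation:Ricci} and using $R_g=O(|x|^{-n-q_0})$). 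This yields $B_i=-b_i\beta$ directly. The second route is the flux of~\eqref{equation:X} against $\nu_j$, giving $B_i=\tfrac{4(n-1)}{n-2}aP_i$; but that computation itself uses $B_j=-b_j\beta$ from the first route to cancel two terms of the form $(2-n)|x|^{-n}(B_j+b_j\beta)x_i\nu_j$ that would otherwise survive. So the $\mathrm{div}_0 V$ expansion is not routine bookkeeping you can defer---it is the missing ingredient that makes both routes close, and your sketch should name it explicitly.
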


\begin{remark}
Standard elliptic theory implies that there exist $C^{3,\alpha}_{1-q}$ solutions $\phi$ and $V_i$ to~\eqref{equation:next-order} which are unique up to constant and Euclidean harmonic functions of order $|x|^{2-n}$ or lower~\cite{Meyers:1963}. Thus, the relevant terms described above in the expansion of $(f, X)$ are independent of the choices of $\phi$ and~$V_i$.
\end{remark}
\begin{remark}
Note that for the purpose of proving our main theorem (Theorem~\ref{theorem:main}), it is unnecessary to prove the second fact that $b_i E =-2aP_i$, because Theorem~\ref{theorem:nontrivial-kernel} already gives us $(f, X)$ with $(a,b)=(E, -2P)$. However, it is interesting to note that the proportionality must hold  more generally. 
\end{remark}

\begin{proof}
Let $\phi$ and  $V_i$ solve \eqref{equation:next-order}. These quantities are chosen so that their Laplacians exactly match the non-homogenous terms of \eqref{equation:Laplacef} and \eqref{equation:LaplaceX}. Therefore harmonic expansion (see e.g. \cite{Meyers:1963}) tells us that there are constants $A, B_i$ such that 
\begin{align} 
	f &= a +A|x|^{2-n} + \tfrac{1}{2(n-1)} b_k  \phi_{,k} + O^{2,\alpha}(|x|^{2-n-q_1}) \label{equation:asymptoticsf}\\
	X^i &= b_i + B_i |x|^{2-n} + \tfrac{2}{n-1}a\phi_{,i}  + b_k V_{i,k} + O^{2,\alpha}(|x|^{2-n-q_1}).\label{equation:asymptoticsX}
\end{align}
The limitation of this expansion comes from the fact that we do not expect the $\phi$ and $V_i$ terms appearing in the expansion to be lower order than $|x|^{2-n}$. However, in what follows, we see that we are able to handle them.

We will  establish \eqref{equation:expansion-alt} by  showing that 
\begin{align}
	A&= -aE + \tfrac{1}{2(n-2)} b\cdot P \label{equation:A}\\
	B_i &=-\tfrac{2(n-1)}{n-2} b_i E \label{equation:B}.
\end{align}
We first prove \eqref{equation:A}. Consider equation~\eqref{equation:f}:
\[  -( \Delta_0f) \delta_{ij} +  f_{,ij} - a R_{ij} + \tfrac{1}{2} b_k \pi_{ij, k}= O^{0,\alpha}(|x|^{-n-q_1}).\]
It is well-known that we can express $E$ as a flux integral involving the Ricci curvature (see, for example, \cite{Huang:2010, Miao-Tam:2016}) and thus
\[	
\int_{S_\infty} -aR_{ij}x_i\nu_j \, d\mathcal{H}^{n-1} = (n-1)(n-2)\omega_{n-1} aE.
\]
This suggests that we should integrate  equation~\eqref{equation:f} against $x_i\nu_j$ over $S_\infty$. 
By the second identity of Corollary~\ref{corollary:basic} and equation~\eqref{equation:asymptoticsf}, we see that 
\begin{align*}
&\int_{S_\infty} \left[ -( \Delta_0f) \delta_{ij} +  f_{,ij} \right]x_i\nu_j \, d\mathcal{H}^{n-1}\\
	& = (1-n)\int_{S_\infty}  f_{,i} \nu_i\, d\mathcal{H}^{n-1} \\
	& = (1-n) \int_{S_\infty} \left[ (2-n) A |x|^{-n} x_i +\tfrac{1}{2(n-1)} b_k\phi_{,ki} \right] \nu_i\, d\mathcal{H}^{n-1} \\
	&=(1-n) (2-n)\omega_{n-1}A  -\tfrac{1}{2}\int_{|x|=r} (b\cdot \nu) \Delta_0 \phi \, d\mathcal{H}^{n-1} \quad \mbox{(by Corollary~\ref{corollary:basic})}\\
	&=(n-1)(n-2) \omega_{n-1} A -\tfrac{1}{2} \int_{|x|=r} (b\cdot \nu) \mathrm{tr}_0 \pi\, d\mathcal{H}^{n-1}.
\end{align*}
To compute the last flux integral from \eqref{equation:f}, we apply Lemma~\ref{lemma:basic} for the tensor $T_{jk} = b_k \pi_{ij} x_i$ in the second equality below  to obtain
\begin{align*}
	 \tfrac{1}{2} \int_{S_\infty}b_k \pi_{ij,k} x_i\nu_j\, d\mathcal{H}^{n-1}  &= \tfrac{1}{2}  \int_{S_\infty} \left[(b_k\pi_{ij} x_i)_{,k} \nu_j- b_k \pi_{kj} \nu_j\right] \, d\mathcal{H}^{n-1}\\
	&= \tfrac{1}{2} \int_{S_\infty}  \left[(b_j\pi_{ik} x_i)_{,k} \nu_j- b_k \pi_{kj} \nu_j\right] \, d\mathcal{H}^{n-1}\\
	&=  \tfrac{1}{2}  \int_{S_\infty}\left[ b_j \pi_{ik,k} x_i\nu_j + (b\cdot \nu) \mathrm{tr}_0\pi - b_k \pi_{kj}\nu_j\right]\, d\mathcal{H}^{n-1}\\
	&=  \tfrac{1}{2} \int_{S_\infty} (b\cdot \nu) \mathrm{tr}_0 \pi \, d\mathcal{H}^{n-1} -  \tfrac{n-1}{2} \omega_{n-1} b\cdot P
\end{align*}
where in the last equality we used the definition of $P$ and the fact that $\pi_{ik,k} = O(|x|^{-n-q_1})$, so the corresponding term integrates to zero in the limit. Knowing that the three previous computations must add up to zero, we obtain
\[ 0 =  (n-1)(n-2)\omega_{n-1} aE + (n-1)(n-2) \omega_{n-1} A - \tfrac{n-1}{2} \omega_{n-1} b\cdot P,\]
which establishes equation~\eqref{equation:A}.

In what follows, we will need the asymptotic expansion of $\Div_0 V$. Observe \eqref{equation:Ricci}:
\[
	\Delta_0 (V_{i,j} + V_{j,i} + g_{ij} ) = g_{kk,ij} + g_{ij,kk}- g_{ik,kj} -g_{jk,ji} = -2 R_{ij} + O^{0,\alpha}(|x|^{-2-2q}).  
\]
Taking the trace of the equation and  using harmonic expansion and $R_g = O^{0,\alpha}(|x|^{-n-q_0})$, we derive
 \begin{equation} \label{equation:divV}
 \Div_0 V =  \tfrac{1}{2} (n-g_{ii}) + \beta |x|^{2-n} + O^{2,\alpha}(|x|^{2-n-q_1}),
 \end{equation}
 for some constant~$\beta$.  We compute $\beta$ by computing the flux of $\Div_0 V$ in two ways. First, using the expansion~\eqref{equation:divV}, 
\begin{align*}
\int_{S_\infty}  (\Div_0 V)_{,j}  \nu_j\, d\mathcal{H}^{n-1}
 &=\int_{S_\infty} \left(-\tfrac{1}{2} g_{ii,j} + (2-n) \beta x_j |x|^{-n}\right)\nu_j\, d\mathcal{H}^{n-1} \\
 &=\int_{S_\infty} -\tfrac{1}{2} g_{ii,j}\nu_j\, d\mathcal{H}^{n-1} + (2-n)\omega_{n-1}\beta.
 \end{align*}
 Second, we use Corollary~\ref{corollary:basic} and the definition of $V_i$ from~\eqref{equation:next-order} to find
\begin{align*}
\int_{S_\infty}  (\Div_0 V)_{,j}  \nu_j\, d\mathcal{H}^{n-1}
= \int_{S_\infty} (\Delta_0 V_i) \nu_i \, d\mathcal{H}^{n-1}=	\int_{S_\infty}\left( \tfrac{1}{2} g_{jj,i} - g_{ij,j}\right)\nu_i \, d\mathcal{H}^{n-1}.
\end{align*}
Thus 
\[
\beta= \tfrac{1}{(n-2)\omega_{n-1}} \int_{S_\infty} ( g_{ij,j}-g_{jj,i} ) \nu_i \, d\mathcal{H}^{n-1}=  \tfrac{2(n-1)}{n-2} E.
\]

 Next we will prove $B_i = -\tfrac{2(n-1)}{n-2}b_i E$. Recall equation~\eqref{equation:divX}:
\[
	\Div_0 X = - \tfrac{1}{2} g_{ii,k}b_k  + \tfrac{2}{n-1} a(\mathrm{tr}_0 \pi)+ O^{1,\alpha}(|x|^{-1-2q}).
\]
We can also compute the divergence using the expansion for $X$ in~\eqref{equation:asymptoticsX}:
\begin{equation}\label{equation:divX2}
\Div_0 X = (2-n)B_i |x|^{-n}x_i + 
 \tfrac{2}{n-1}a\Delta_0\phi  + b_k (\Div_0 V)_{,k} + O^{1,\alpha}(|x|^{1-n-q_1}).
\end{equation}
By comparing these two equations, the definition of $\phi$, and our expansion of $\Div_0 V$ in~\eqref{equation:divV}, we obtain 
\begin{align*}
 - \tfrac{1}{2} g_{ii,k}b_k 
 &=(2-n)B_i |x|^{-n}x_i  + b_k (\Div_0 V)_{,k} + O^{1,\alpha}(|x|^{1-n-q_1}) \\
 &= (2-n)B_i |x|^{-n}x_i   -\tfrac{1}{2} g_{ii,k} b_k+(2-n)b_k \beta |x|^{-n}x_k+ O^{1,\alpha}(|x|^{1-n-q_1}).
\end{align*}
 Thus $B_i = -b_i\beta = -\tfrac{2(n-1)}{n-2} b_i E$.

Finally we will prove that $b_i E = -2aP_i$ by showing that  $B_i = \tfrac{4(n-1)}{n-2} aP_i$ in a similar manner to how we proved equation~\eqref{equation:A} and combining this with our previous formula for $B_i$. Consider the equation~\eqref{equation:X}: 
\[
X^i_{,j} + X^j_{,i}  + g_{ij,k}b_k - \tfrac{4}{n-1} a(\mathrm{tr}_0 \pi) \delta_{ij} + 4a \pi_{ij} = O^{1,\alpha}(|x|^{-1-2q}). 
\]
As before, we will use the fact that the flux integral of the above quantity must be zero. We know that the flux of the last term is 
\[ \int_{S_\infty} 4a \pi_{ij} \nu_j \, d\mathcal{H}^{n-1} = 4(n-1)\omega_{n-1}a P_i,\]
and we expect $B_i$ to show up when we take the flux of the $X$ terms. 
Using the expansion for $X$, as well as Lemma~\ref{lemma:basic} (with $T_{jk}=b_k(V_{i,j}+V_{j,i}) $ in the second equality and with $T_{jk} = b_j g_{ik}$ in the last equality) and Corollary~\ref{corollary:basic} liberally,
\begin{align*}
&\int_{S_\infty} (X^i_{,j}+X^j_{,i})\nu_j \, d\mathcal{H}^{n-1} \\
&= \int_{S_\infty}\left[  (2-n)|x|^{-n}( B_i x_j + B_j x_i) + \tfrac{4}{n-1}a \phi_{,ij} +
b_k (V_{i,kj} + V_{j,ki} )\right]   \nu_j \, d\mathcal{H}^{n-1} \\
&=(2-n)\omega_{n-1}B_i + \int_{S_\infty}\left[  (2-n) |x|^{-n} B_j x_i+ \tfrac{4}{n-1}a \Delta_0 \phi \delta_{ij}+
b_j (\Delta_0 V_i +  (\Div_0 V)_{,i} )\right]   \nu_j \, d\mathcal{H}^{n-1} \\
&=(2-n)\omega_{n-1}B_i + \int_{S_\infty}\left[ (2-n) |x|^{-n} B_j x_i+  \tfrac{4}{n-1}a \Delta_0 \phi \delta_{ij}
-b_jg_{ik,k} + (2-n) |x|^{-n}b_j\beta x_i )\right]   \nu_j \, d\mathcal{H}^{n-1}\\
&=(2-n)\omega_{n-1}B_i + \int_{S_\infty}\left[   \tfrac{4}{n-1}a (\mathrm{tr}_0\pi) \delta_{ij}
-b_k g_{ij,k}\right]   \nu_j \, d\mathcal{H}^{n-1},
\end{align*}
where we use $B_j = -b_j \beta$ in the last equality.
Now it is apparent that the flux integrals of the terms $g_{ij,k}b_k - \tfrac{4}{n-1} a(\mathrm{tr}_0 \pi) \delta_{ij}$ from \eqref{equation:X} will cancel against integrals in the above expression. Putting it all together, we obtain the desired equation
 $B_i = \tfrac{4(n-1)}{n-2}a P_i$.

\end{proof}

We immediately obtain the following corollary.
\begin{corollary}\label{corollary:asymptotics}
Under the same assumption as in Theorem~\ref{theorem:asymptotics}, we have the following:
\begin{enumerate}
\item If $E\neq 0$ and $a\neq 0$, then $(a,b)$ is proportional to $(E, -2P)$, and thus, up to scaling, we have
\begin{align}\label{equation:expansion}
\begin{split}
	f &= E -\left (E^2 + \tfrac{1}{n-2} |P|^2\right) |x|^{2-n}-\tfrac{1}{n-1} P_k \phi_{,k}+ O^{2,\alpha}(|x|^{2-n-q_1})\\
	X^i &=-2P_i + \tfrac{4(n-1)}{n-2} EP_i |x|^{2-n} + \tfrac{2}{n-1} E \phi_{,i} - 2P_k V_{i,k} + O^{2,\alpha}(|x|^{2-n-q_1}).
\end{split}
\end{align}
\item If $E\neq 0$ and $a=0$, then $b=0$. 
\item If $E=0$, then either $a=0$ or $P=0$, and $(f,X)$ satisfies 
\begin{align*}
	f &= a+\tfrac{1}{2(n-2)} b\cdot P |x|^{2-n} + \tfrac{1}{2(n-1)} b_k \phi_{,k}+ O^{2,\alpha}(|x|^{2-n-q_1})\\
	X^i &=b_i + \tfrac{2}{n-1} a \phi_{,i} + b_k V_{i,k} + O^{2,\alpha}(|x|^{2-n-q_1}).
\end{align*}
\end{enumerate}
\end{corollary}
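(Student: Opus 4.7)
The plan is to deduce the corollary directly from Theorem~\ref{theorem:asymptotics}, using the relation $b_i E = -2aP_i$ together with the asymptotic formulas \eqref{equation:expansion-alt} as the only inputs. The argument is a short case analysis on the vanishing of $E$ and $a$; there is no real analytical obstacle, since all the hard work has already been done in the theorem. The only conceptual point worth highlighting is that the adjoint equations defining ``asymptotically vacuum Killing initial data'' and the notion of being asymptotic to $(a,b)$ are both linear in $(f,X)$, so rescaling $(f,X)$ by a nonzero constant preserves the hypotheses and scales $(a,b)$ by the same constant; this is what legitimizes the ``up to scaling'' phrasing in (i).

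For (iii), if $E=0$, then $-2aP_i = b_i E = 0$ immediately gives $a=0$ or $P=0$, and the claimed expansion is just \eqref{equation:expansion-alt} with $E=0$ substituted into the $|x|^{2-n}$ coefficients (the $A|x|^{2-n}$ term becomes $\tfrac{1}{2(n-2)}(b\cdot P)|x|^{2-n}$ and the $B_i|x|^{2-n}$ term vanishes). For (ii), if $E\neq 0$ and $a=0$, then $b_i E = -2aP_i = 0$ together with $E\neq 0$ forces $b=0$.

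For (i), when $E\neq 0$ and $a\neq 0$, the relation $b_i E = -2aP_i$ rearranges to $b = -2(a/E)P$, so $(a,b) = (a/E)(E,-2P)$, proving the proportionality. Replacing $(f,X)$ by $(E/a)(f,X)$, I may assume $(a,b)=(E,-2P)$ without loss of generality. Then I simply substitute $a=E$, $b=-2P$ into \eqref{equation:expansion-alt}: the leading coefficient in $f$ becomes $-E^2 + \tfrac{1}{2(n-2)}(-2P)\cdot P = -\bigl(E^2 + \tfrac{1}{n-2}|P|^2\bigr)$ and the $\phi$-term becomes $-\tfrac{1}{n-1}P_k\phi_{,k}$; the leading coefficient of $X^i$ becomes $-\tfrac{2(n-1)}{n-2}(-2P_i)E = \tfrac{4(n-1)}{n-2}EP_i$, while the $\phi_{,i}$ and $V_{i,k}$ terms become $\tfrac{2}{n-1}E\phi_{,i}$ and $-2P_kV_{i,k}$ respectively. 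This gives exactly \eqref{equation:expansion}, completing the proof.
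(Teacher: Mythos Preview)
Your proposal is correct and is precisely what the paper intends: the paper records no proof beyond ``The corollary follows immediately,'' and your case analysis from $b_iE=-2aP_i$ together with direct substitution into \eqref{equation:expansion-alt} is exactly that immediate deduction, with the algebra checked.
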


\begin{proof}[Proof of Theorem \ref{theorem:zero}]

We begin by assuming that $(f,X)$ is asymptotically vacuum Killing initial data for $(g,\pi)$ that is asymptotic to some $(a,b)$, where $a\in \mathbb{R}$ and $b\in \mathbb{R}^n$ are not all zero. Suppose that $E\neq0$. By  Corollary~\ref{corollary:asymptotics}, it follows that $a\neq 0$ and we can scale  $(f, X)$ so that $(f,X)$ is asymptotic to $(E,-2P)$. We can also rotate our coordinates so that without loss of generality, $P$ points in the $x_n$-direction. That is, $P=(0,\ldots,0,|P|)$.

Now substitute what we know about $(a,b)$ into \eqref{equation:f} and \eqref{equation:X} and also replace
the $\Delta_0 f$ term using \eqref{equation:Laplacef}. Doing this we obtain
\begin{align}
	 \tfrac{1}{n-1}|P| (\mathrm{tr}_0 \pi)_{,n} \delta_{ij} + f_{,ij} - ER_{ij} - |P|\pi_{ij,n} &= O^{0,\alpha}(|x|^{-n-q_1})\label{equation:ff}\\
	 X^i_{,j} + X^j_{,i} - 2|P| g_{ij,n} - \tfrac{4}{n-1} E(\mathrm{tr}_0\pi) \delta_{ij}+4E\pi_{ij} &=  O^{1,\alpha}(|x|^{-1-2q}).\label{equation:XX}
\end{align}
Differentiate~\eqref{equation:XX} in the $x_n$-direction to obtain 
\begin{align} \label{equation:X_n}
 X^i_{,jn} + X^j_{,in} - 2|P| g_{ij,nn} - \tfrac{4}{n-1} E(\mathrm{tr}_0\pi)_{,n} \delta_{ij}+4E\pi_{ij,n} =  O^{0,\alpha}(|x|^{-2-2q}).
 \end{align}
(Note that $n$ is fixed and not a summation index.) Equations \eqref{equation:ff} and \eqref{equation:X_n} will combine very nicely precisely when $E=|P|$. So from now on we invoke the hypothesis that $E=|P|$. Combining those two equations together, we obtain
\begin{align}\label{equation:combination}
	4f_{,ij} - 4E R_{ij} + X^i_{,jn} + X^j_{,in} - 2E g_{ij,nn} = O^{0,\alpha}(|x|^{-n-q_1}). 
\end{align}

By Corollary~\ref{corollary:asymptotics}, equations \eqref{equation:expansion} hold, and they now reduce to
\begin{align}
\begin{split}\label{equation:f-X}
	f &= E -\tfrac{n-1}{n-2} E^2 |x|^{2-n}-\tfrac{1}{n-1}E \phi_{,n}+ O^{2,\alpha}(|x|^{2-n-q_1})\\
	X^i &=-2E\delta_{in} + \tfrac{4(n-1)}{n-2} E^2 |x|^{2-n} \delta_{in}+ \tfrac{2}{n-1} E \phi_{,i} - 2E V_{i,n} + O^{2,\alpha}(|x|^{2-n-q_1}).
\end{split}
\end{align}
Substitute the asymptotics of $(f, X)$ into \eqref{equation:combination} and replace the Ricci term by \eqref{equation:Ricci}. We obtain
\begin{align*}
 &2E\Delta' (g_{ij} +V_{i,j}+V_{i,j})\\
 &\quad  = \tfrac{4(n-1)}{n-2}E^2 \left( \partial_i\partial_j |x|^{2-n} -  (\partial_j \partial_n |x|^{2-n}\delta_{in} + \partial_i \partial_n |x|^{2-n} \delta_{jn}) \right)+ O^{0,\alpha}(|x|^{-n-q_1})
\end{align*}
where $\Delta'$ denotes Euclidean Laplacian in the first $(n-1)$ coordinates $(x_1, \dots, x_{n-1})$. 
 
We will use capital letters to denote indices running from $1$ to $n-1$, $x' = (x_1,\dots, x_{n-1})$, and $\rho = |x'|$. If we define 
\[ 
\omega_{AB} = g_{AB}-\delta_{AB}+ V_{A,B}+V_{B,A},
\]
then  $\omega_{AB} \in C^{2,\alpha}_{-q} (M\setminus K)$ and
\[
\Delta' \omega_{AB} = \tfrac{2(n-1)}{n-2}E \partial_A\partial_B |x|^{2-n}+ O^{0,\alpha}(|x|^{-n-q_1}).
\]

Now define
\[
	Y_B = x_Ax_C w_{AC, B} -\tfrac{1}{n-1} \rho^2 w_{AA, B} - 2 x_A w_{AB} + \tfrac{2}{n-1} x_B w_{AA},
\]
so that $Y_B\in  C^{1,\alpha}_{1-q} (M\setminus K)$ and $Y_n = 0$.
Denoting the divergence operator of the first $(n-1)$ components by $\Div'$, we can  compute
\begin{align}\label{equation:Y}
\begin{split}
	\Div' Y &= x_A x_B \Delta' \omega_{AB} - \tfrac{1}{n-1} \rho^2 \Delta' \omega_{AA}\\
 &=2(n-1)E(n\rho^4 |x|^{-n-2} - \rho^2 |x|^{-n})- 2E (n\rho^4 |x|^{-n-2} -(n-1) \rho^2 |x|^{-n})+ O^{0,\alpha}(|x|^{2-n-q_1})\\
 &=-2E\rho^4 |x|^{-n-2} + O^{0,\alpha}(|x|^{2-n-q_1}).
\end{split}
\end{align}
As a matter of pure analysis, if $\partial_n Y$ decays sufficiently fast, this is impossible unless $E=0$. This completes the proof, modulo the technical lemma immediately below.
\end{proof}

The following lemma is the only place where the assumption~\eqref{equation:extra} that $q+ \alpha > n-2$ is required. 
\begin{lemma}\label{lemma:strong-decay}
Let $q$ and $\alpha$ be  numbers such that $\alpha\in (0, 1)$ and $q+ \alpha > n-2$. Let $Y\in C^{1,\alpha}_{1-q}$ be a vector field on $\mathbb{R}^n\setminus B$. Suppose that $Y$ satisfies
\[
	\Div'Y = -2E\rho^4 |x|^{-n-2} + v(x)
\]
where $E$ is a constant and $v(x) \in C^{0,\alpha}_{2-n-q_1} (\mathbb{R}^n\setminus B) $ for some $q_1>0$. Then $E=0$. 
\end{lemma}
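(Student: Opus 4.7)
The plan is to integrate the identity $\mathrm{div}' Y = -2E\rho^4|x|^{-n-2}+v$ over an annular region and read off $E$ from the resulting asymptotic balance.  First I would extend $Y$ trivially to an $n$-dimensional vector field $\tilde Y:=(Y_1,\dots,Y_{n-1},0)$ so that $\mathrm{div}_0\tilde Y=\mathrm{div}'Y$ on $\mathbb{R}^n\setminus B$, and then apply the standard divergence theorem on $\{1\le|x|\le R\}$.  A spherical-coordinate computation (with $\rho=r\sin\theta$) yields $\int_{B_R\setminus B_1}\rho^4|x|^{-n-2}\,dx = c(R^2-1)/2$ for an explicit $c>0$, while $|v|\le C|x|^{2-n-q_1}$ gives $\bigl|\int v\,dx\bigr|\le CR^{2-q_1}$.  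Consequently the sphere flux $F(R):=\int_{|x|=R}\tilde Y\cdot\nu\,d\mathcal{H}^{n-1}$ satisfies
\[
F(R) \;=\; -Ec\,R^2 + O(R^{2-q_1}) + O(1) \qquad \text{as } R\to\infty.
\]

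The second step is to produce an \emph{upper} bound $|F(R)|=o(R^2)$; together with the asymptotic above, this forces $Ec=0$ and hence $E=0$.  From $|\tilde Y|\le C|x|^{q-1}$ one has the crude estimate $|F(R)|\le CR^{n+q-2}$, which is $o(R^2)$ precisely when $q<4-n$.  In dimension $n=3$ (where $q<1$) this holds automatically, so the lemma follows immediately --- this is essentially the original Beig--Chru\'sciel observation.

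For $n\ge 4$ the crude bound is too weak, and the hypothesis $q+\alpha>n-2$ must enter.  The idea is to exploit the antisymmetry $\int_{S^{n-1}}\nu\,d\omega=0$, writing
\[
F(R) \;=\; R^{n-1}\int_{S^{n-1}} \bigl(\tilde Y(R\omega) - \tilde Y(Re_n)\bigr)\cdot\omega\,d\omega,
\]
and then Taylor-expanding $\tilde Y$ around $Re_n$ using the weighted Hölder bound $|\nabla\tilde Y(x)-\nabla\tilde Y(y)|\le C|x|^{q-2-\alpha}|x-y|^\alpha$.  On a cap around $Re_n$ the first-order Taylor term integrates, via the trace identity $\int(\omega-e_n)_i\omega_j\,d\omega\propto\delta_{ij}$, to a multiple of $R\,\mathrm{div}'Y(Re_n)$; since $\rho=0$ at the north pole the equation reduces this to $Rv(Re_n)=O(R^{3-n-q_1})$.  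The Hölder remainder and the portion of the sphere outside the cap are handled by a partition of unity producing local Taylor expansions on finitely many spherical caps, with the condition $q+\alpha>n-2$ calibrated exactly so that the aggregated remainders beat $R^2$ after the outer factor of $R^{n-1}$.

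The main obstacle is executing this partition-of-unity estimate quantitatively.  Taylor's theorem with Hölder remainder applies only when $|x-y|\le|x|/2$, so one must cover $S^{n-1}$ by caps of bounded angular radius, match the linear terms between neighbouring caps using $\mathrm{div}'Y=g$ at each reference point, and check that the collected Hölder remainders are $o(R^{3-n})$ angularly; the inequality $q+\alpha>n-2$ should emerge as the precise threshold that makes this work, yielding $|F(R)|=o(R^2)$ and completing the argument.
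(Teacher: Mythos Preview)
Your approach has a genuine gap, and the sign slip at the start conceals it.  The space $C^{1,\alpha}_{1-q}$ means $|Y(x)|\le C|x|^{1-q}$ and $|\nabla Y(x)|\le C|x|^{-q}$ (the subscript is the growth rate, not its negative).  With the correct sign the crude flux bound is $|F(R)|\le C R^{n-q}$, and since $q<n-2$ this is \emph{never} $o(R^2)$, not even for $n=3$.  So the $n=3$ case is not automatic.

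More importantly, the partition-of-unity Taylor scheme cannot yield $|F(R)|=o(R^2)$.  On a cap with center $p_k$ the constant term $\tilde Y(Rp_k)\cdot\int_{C_k}\omega$ does not vanish, and after summing over caps these constant pieces form a Riemann sum that reproduces $F(R)$ itself; no cancellation is gained.  The linear terms suffer the same problem: the identity $\int_{S^{n-1}}(\omega_i-(e_n)_i)\omega_j\,d\omega\propto\delta_{ij}$ holds only for the full sphere, not for caps, so on each cap the linear term is not a multiple of $\mathrm{div}'Y$ at the center.  Stepping back, the flux $F(R)$ is completely determined by $\mathrm{div}'Y$ via the divergence theorem, so it equals $-cER^2+O(R^{2-q_1})$ \emph{by the equation alone}; showing $|F(R)|=o(R^2)$ from the decay hypothesis on $Y$ would therefore be equivalent to what you are trying to prove.  (There exist $Y\in C^{1,\alpha}_{1-q}$ with $F(R)\sim R^{n-q}$, e.g.\ $Y_B=x_B|x|^{-q}$; they simply have a different $\mathrm{div}'Y$.)

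The paper's argument is structurally different.  It works on $(n-1)$-dimensional $x_n$-slices and studies the slice flux
\[
I(x_n)=\lim_{\rho\to\infty}\int_{|x'|=\rho} D_h Y\cdot\frac{x'}{\rho}\,d\mathcal H^{n-2}
=\int_{\mathbb R^{n-1}} D_h(\mathrm{div}'Y)\,dx',
\]
where $D_h$ is an $x_n$-difference quotient with step $h=x_n^{2s}$.  The equation makes the $E$-contribution odd in $x_n$ after the difference quotient, so $I(x_n)$ has a definite sign for large $|x_n|$.  On the other hand, $I(h)-I(0)$ is a limit of integrals of $\partial_n(D_h Y)$ over $(n-2)$-spheres $|x'|=\rho$; using the H\"older bound $[\partial_n Y]_{C^{0,\alpha}_{-q}}$ one gets an integrand of size $\rho^{-q-\alpha}$, and since the sphere has area $\sim\rho^{n-2}$ the limit vanishes precisely when $q+\alpha>n-2$.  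Thus $I$ is constant in $x_n$, contradicting the sign change unless $E=0$.  The crucial ingredients you are missing are the passage to an $x_n$-derivative (which creates the sign and lets the H\"older seminorm act on \emph{nearby} points) and the use of slice-infinity fluxes rather than a full $n$-sphere flux.
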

\begin{proof}

Suppose on the contrary that  $E\neq 0$. We may assume $E>0$. For each $x_n$ and $h>0$, define the limit of flux integrals on each $x_n$-slice by
\[
	I_h(x_n)=\lim_{\rho\to\infty}\int_{|x'|=\rho}D_hY(x', x_n) \cdot \frac{x'}{\rho}\, d\mathcal{H}^{n-2},
\]	
where $D_h Y$ is the difference quotient in the $x_n$ coordinate defined by, for each $h> 0$,
\[
	(D_h Y) (x', x_n) = \frac{Y(x', x_n+h) - Y(x', x_n)}{ h}.
\]
We will choose $h$ to depend on $x_n$ later, so we will drop the subscript $h$ of $I_h$ to simplify the notation. We note that if $Y$ has stronger regularity, e.g. $Y\in C^{2,\alpha}_{1-q}$, then we can use $\partial_n Y$ as in \cite{Beig-Chrusciel:1996}, instead of the delicate difference quotient. 

We now compute the limit. By divergence theorem on the $x_n$-slice, we have
\[	
 	I(x_n)=\int_{\mathbb{R}^{n-1}} \Div'D_h Y\, dx' = \int_{\mathbb{R}^{n-1}} D_h (\Div'Y)\, dx'.
\]
We denote by $u(x', x_n) = -2E\rho^4 |x|^{-n-2}$. By Taylor expansion in the $x_n$ coordinate,   
\[
  D_hu(x', x_n) = 2(n+2) E \rho^4 |x|^{-n-4} x_n+ O(\rho^4|x|^{-n-4}h).
 \]
For the $v$ term, we have 
 \[
	|D_h v(x', x_n)|\le [v]_{\alpha} h^{-1+\alpha}\le \| v\|_{C^{0,\alpha}_{2-n-q_1}} |x|^{2-n-q_1-\alpha} h^{-1+\alpha}.
\]
Combining the above computations, we can rewrite the integrand as 
\[
	D_h (\Div'Y) = 2(n+2) E \rho^4 |x|^{-n-4} x_n+ O(\rho^4|x|^{-n-4}h + |x|^{2-n-q_1-\alpha} h^{-1+\alpha}).
\]
In order for the $E$ term to dominate, we choose $h=x_n^{2s}$ where $s>0$ satisfies
\[
	1 - \frac{q_1}{(1-\alpha)} < 2s < 1.
\]

We will use the fact that for any positive real numbers $a, b$ with $b-a < 1-n$, there exists constants $0< C_1 < C_2$ depending only on $n, a, b$ such that
\[
	 C_1|x_n|^{n-1-a+b} \le \int_{\mathbb{R}^{n-1}}\rho^b |x|^{-a} \, d x'\le C_2|x_n|^{n-1-a+b}.
\]
The proof is a straightforward computation by estimating the integral over the regions where  $\rho\le |x_n|$ and $\rho\ge |x_n|$  separately. Combining the above inequalities with the equation of $D_h (\Div'Y)$  allows us to estimate $I(x_n)$  as follows, for some constant $C$ independent of $x_n$:
\begin{align*}
	I(x_n) &\ge 2(n+2) C_1 E - C(|x_n|^{-1+2s} + |x_n|^{1-q_1-\alpha-2s(1-\alpha)}) &&\mbox{ if } x_n>0\\
	I(x_n) &\le - 2(n+2) C_1 E + C(|x_n|^{-1+2s} + |x_n|^{1-q_1-\alpha - 2s(1-\alpha)})&&\mbox{ if } x_n<0.
\end{align*}
Our hypothesis on $s$ implies that the $E$ term dominates, and hence, for $|x_n|$ sufficiently large, we have $I(x_n) >0$ if $x_n>0$ and  $I(x_n)<0$ if $x_n<0$.

On the other hand, this will contradict the decay assumption of $Y$ as follows. For every $\bar{h}$,
\begin{align*}
	I(\bar{h}) - I(0)&=\lim_{\rho \to \infty}  \int_{\{|x'|=\rho\}} \int_0^{\bar{h}}\partial_n(D_{x_n^{2s}} Y)\cdot \frac{x'}{\rho} \, dx_nd\mathcal{H}^{n-2}.
\end{align*}
Computing the integrand, for $|x_n|>0$, 
\begin{align*}
	&\partial_n(D_{x_n^{2s}} Y)\\
	&=\partial_n \left[ \frac{Y(x', x_n + x_n^{2s}) - Y(x', x_n) }{x_n^{2s}}\right]\\
	&=\frac{(\partial_nY)(x', x_n+ x_n^{2s}) - (\partial_n Y)(x' , x_n)}{x_n^{2s}} + \frac{2s}{x_n} \left[(\partial_n Y)(x', x_n + x_n^{2s}) - \frac{Y(x', x_n+x_n^{2s}) - Y(x', x_n)}{x_n^{2s}}\right]\\
	&=\frac{(\partial_nY)(x', x_n+ x_n^{2s}) - (\partial_n Y)(x' , x_n)}{x_n^{2s}} + \frac{2s}{x_n}\left[(\partial_n Y)(x', x_n + x_n^{2s}) - (\partial_n Y)(x', x_n + c) \right]
\end{align*}
for some $c\in (0, x_n^{2s})$ by Mean Value Theorem. Then we estimate term by term as follows:
\begin{align*}
	\left|\partial_n(D_{x_n^{2s}} Y)\right|&\le [ \partial_n Y]_\alpha |x_n|^{2s(\alpha-1)} + \frac{2s}{|x_n|} [\partial_n Y]_\alpha |x_n^{2s} - c|^\alpha\\
	& \le \left(|x_n|^{2s(\alpha-1)}+2s|x_n|^{-1+2s\alpha}\right)\| \partial_n Y\|_{C^{0,\alpha}_{-q}(\mathbb{R}^n\setminus B)}|x|^{-q-\alpha}. 
\end{align*}
Our assumption $q+\alpha > n-2$, as well as $2s<1$,  implies that
\[
	| I(\bar{h}) - I(0) |\le \omega_{n-2}\| \partial_n Y\|_{C^{0,\alpha}_{-q}(\mathbb{R}^n\setminus B)} \lim_{\rho \to \infty} \rho^{-q-\alpha} \rho^{n-2} \int_0^{\bar{h}} (|x_n|^{2s(\alpha-1)}+2s|x_n|^{-1+2s\alpha})  dx_n=0.
\]

\end{proof}

\section{Elliptic regularity and the adjoint equation}\label{appendix:regularity}

We begin with a version of elliptic regularity for $L^p$ weak solutions. Since we were unable to find a direct reference that applies in our setting, we include the proof.

\begin{proposition} \label{proposition:Lp-estimate}
 Let $U$ be a bounded open subset of $\mathbb{R}^n$ and let $u\in L^p(U)$, $p\in (1, \infty)$, be a weak solution to \[
	\Delta_0 u+b_i \partial_i u+ cu = \Div V +f,
\] 
where $\Delta_0$ is the Euclidean Laplace operator, $b = (b_1,\dots, b_n) \in W^{1,\infty}(U), c\in L^\infty(U)$, and $f\in L^p(U)$.  
\begin{enumerate}
\item If $V=(V_1, \dots, V_n)\in L^p(U)$, then $u\in W^{1,p}_{\mathrm{loc}}(U)$ and for any $U'\subset\subset U$, 
\[
	\| u\|_{W^{1,p}(U')}\le C(\| u \|_{L^p(U)} + \| V \|_{L^p(U)} + \| f \|_{L^p(U)}),
\] 
where $C$ depends on $n, p, \| b\|_{W^{1,\infty}(U)}, \| c \|_{L^{\infty}(U)},  U', U$.
\item \label{item:Wp-regularity} If $V\in W^{1,p}(U)$, then $u\in W^{2,p}_{\mathrm{loc}}(U)$  and for any $U'\subset\subset U$,  
\[
	\| u\|_{W^{2,p}(U')}\le C(\| u \|_{L^p(U)} + \| V \|_{W^{1,p}(U)} + \| f \|_{L^p(U)}),
\]
where $C$ depends on $n, p,  \| b\|_{W^{1,\infty}(U)}, \| c \|_{L^{\infty}(U)}, U', U$.
\end{enumerate}
\end{proposition}
\begin{proof}
The general case can be reduced to the $b=0$, $c=0$ case by expressing 
\[
	\Delta_0 u =  (\Div b - c) u + f+\Div (V- bu).
\]	
So from here on, we assume without loss of generality that $b=0$ and $c=0$.

For $\epsilon>0$, we denote by $\phi_\epsilon$ the mollification of  $\phi$. Then
\[
	\Delta_0 u_{\epsilon} = \Div V_\epsilon+ f_{\epsilon}.
\]
Let $\chi$ be a smooth bump function that is identically  $1$ in $U'$ and supported in $W\subset\subset U$. We compute
\begin{align}\label{equation:cut-off}
	\Delta_0  (\chi u_\epsilon) &=\Div ( 2 u_\epsilon \nabla \chi + \chi V_\epsilon) - u_\epsilon \Delta_0 \chi - \nabla \chi \cdot V_\epsilon+ \chi f_\epsilon= \Div \overline{V} + \overline{f},
\end{align}
where  $\overline{V} := 2 u_\epsilon \nabla \chi + \chi V_\epsilon$ and $\overline{f} :=  - u_\epsilon \Delta_0\chi - \nabla \chi \cdot V_\epsilon+ \chi f_\epsilon$. Since $\chi u_\epsilon$ is smooth and compactly supported in $U$, by uniqueness of solutions, we have
 \[\chi u_\epsilon = v+w,\]
  where $v$ and $w$ are the Newtonian potentials of $\Div\overline{V}$ and $\overline{f}$ respectively. By the Calder\'{o}n-Zygmund inequality (see~\cite[Theorem~9.9]{Gilbarg-Trudinger:1983}, for example) and the Poincar\'e  inequality, we have
\begin{align*}
	\| v \|_{W^{1,p}(W)}\le C \| \overline{V}\|_{L^p(W)}\\
	\| w \|_{W^{2,p}(W)}\le C \| \overline{f}\|_{L^p(W)},
\end{align*}
 where $C$ depends only on $n, p, |W|$. Thus, we derive, for $\epsilon>0$ sufficiently small,
\begin{align}\label{equation:epsilon}
\begin{split}
	\|  u_\epsilon\|_{W^{1,p}(U') } \le \| \chi u_\epsilon\|_{W^{1,p}(W) }  &\le (\| v \|_{W^{1,p}(W)} + \| w \|_{W^{1,p}(W)})\\
	&\le C ( \| \overline{V} \|_{L^p(W)} +\|\overline{f} \|_{L^p(W)} )\\
	&\le C( \| u\|_{L^p(U)} + \| V\|_{L^p(U)}+\| f \|_{L^p(U)}),
\end{split}
\end{align}
where we enlarge the constant $C$ if needed and, in the last inequality we use the fact that mollifications satisfy $\| \phi_\epsilon\|_{L^p(W)} \le  \| \phi\|_{L^p(U)}$, provided  $\epsilon>0$ is sufficiently small. By the Rellich-Kondrachov compactness theorem, a subsequence of $u_{\epsilon}$ weakly converges to  a limit in $W^{1,p}(U')$. By uniqueness of the weak limit, the limit must be~$u$.  Therefore $u\in W^{1,p}(U')$ and hence $u_\epsilon \to u$ in $W^{1,p}(U')$. Passing $\epsilon\to 0$ in the previous estimate gives the desired result. 

If we further assume that $V\in W^{1,p}(U)$, then the right hand side of \eqref{equation:cut-off}  has  better regularity. Following a similar argument and using the derived $W^{1,p}$ estimate for $u$, we obtain the desired $W^{2,p}$ estimate. 
\end{proof}

Proposition~\ref{proposition:Lp-estimate} implies the following regularity for weighted norms by a standard scaling technique.

\begin{corollary}\label{corollary:Lp-estimate}
Let $\delta \in \mathbb{R}$ and $p\in (1, \infty)$. Let $u\in L^p_\delta (\mathbb{R}^n)$ be a weak solution to 
\[
	\Delta_0 u+b_i \partial_i u+ cu = \Div V +f,
\] 
where the coefficients  $b = (b_1,\dots, b_n) \in W^{1,\infty}_{-1}  (\mathbb{R}^n), c\in L^\infty_{-2} (\mathbb{R}^n)$, and $f\in L^p_{-2+\delta} (\mathbb{R}^n)$. 
\begin{enumerate}
\item If $V=(V_1, \dots, V_n)\in L^p_{-1+\delta} (\mathbb{R}^n)$, then $u\in W^{1,p}_{\delta} (\mathbb{R}^n)$ and 
\[
	\| u\|_{W^{1,p}_{\delta} (\mathbb{R}^n)}\le C(\| u \|_{L^p_\delta (\mathbb{R}^n)} + \| V \|_{L^p_{-1+\delta}  (\mathbb{R}^n)} + \| f \|_{L^p_{-2+\delta} (\mathbb{R}^n)}),
\] 
where $C$ depends on $n, p, \delta, \| b\|_{W^{1,\infty}_{-1} (\mathbb{R}^n)}, \| c \|_{L^{\infty}_{-2} (\mathbb{R}^n)}$.
\item If $V\in W^{1,p}_{-1+\delta} (\mathbb{R}^n)$, then $u\in W^{2,p}_{\delta} (\mathbb{R}^n)$  and  
\[
	\| u\|_{W^{2,p}_\delta  (\mathbb{R}^n)}\le C(\| u \|_{L^p_\delta (\mathbb{R}^n)} + \| V \|_{W^{1,p}_{-1+\delta}  (\mathbb{R}^n)} + \| f \|_{L^p_{-2+\delta} (\mathbb{R}^n)}),
\]
where $C$ depends on $n, p, \delta, \| b\|_{W^{1,\infty}_{-1}(\mathbb{R}^n)}, \| c \|_{L^{\infty}_{-2}(\mathbb{R}^n)}$.
\end{enumerate}
\end{corollary}

The adjoint operator $(D\Phi_{(g,\pi)}) ^*$ gives rise to an over-determined elliptic system, and the solutions enjoy elliptic regularity that we will discuss below. Let $(M, g,\pi)$ be an $n$-dimensional initial data set. Recall the formal $L^2$ adjoint operator of the linearized modified constraint operator:
\begin{align} \tag{\ref{equation:modified-adjoint}}
\begin{split}
 	(D\overline{\Phi}_{(g,\pi)}) ^*(f, X)  & = \left(  L_g^*f +\left( \tfrac{2}{n-1} (\mbox{tr}_g \pi) \pi_{ij} - 2 \pi_{ik} \pi^k_j \right) f\right.
	\\
	& \quad+ \tfrac{1}{2} \left( g_{i\ell}g_{jm} (L_X\pi)^{\ell m} + (\Div_g X) \pi_{ij}- X_{k;m} \pi^{km} g_{ij} - g(X, J) g_{ij} \right) -\tfrac{1}{2} (X\odot J)_{ij}, \\
	&\quad \left. -\tfrac{1}{2} (L_Xg)^{ij} + \left(\tfrac{2}{n-1} (\mbox{tr}_g \pi ) g^{ij}- 2 \pi^{ij}  \right) f\right),
\end{split}
 \end{align}
where  $L_g^*f = -(\Delta_g f)g + \textup{Hess}_g f - f \textup{Ric}(g)$ and the indices are raised or lowered by $g$.

 \begin{lemma}\label{lemma:adjoint-equations}
 Let $(f, X)$ solve   $(D\overline{\Phi}_{(g,\pi)} )^*(f, X)=(h,w)$. Then  $(f, X)$ satisfies the following Hessian type equations:
 \begin{align*}
	h_{ij} - \tfrac{1}{n-1}(\mathrm{tr}_g h)g_{ij} &= f_{;ij}  +\left[ -R_{ij} + \tfrac{2}{n-1} (\mathrm{tr}_g \pi) \pi_{ij} - 2 \pi_{ik} \pi^k_j + \tfrac{1}{n-1} \left(R_g- \tfrac{2}{n-1} (\mathrm{tr}_g \pi)^2 + 2|\pi|^2\right)g_{ij}\right] f\\
	&\quad + \tfrac{1}{2} \left( g_{i\ell}g_{jm} (L_X\pi)^{\ell m} + (\Div_g X) \pi_{ij}- X_{k;m} \pi^{km} g_{ij} - g(X, J) g_{ij} \right)-\tfrac{1}{2}(X\odot J)_{ij}\\
	&  \quad-\tfrac{1}{2(n-1)} \left( \mathrm{tr}_g (L_X \pi) +  (\Div_gX) (\mathrm{tr}_g \pi) - n  X_{k;m}\pi^{km} -  (n+1)g(X, J)\right) g_{ij}\\
	 - w_{ij;k}-w_{ki;j} + w_{jk;i} &=   X_{i;jk}+\tfrac{1}{2} (R^{\ell}_{kji} + R^{\ell}_{ikj} + R^{\ell}_{ijk}) X_{\ell}\\
	&\quad-\left(\left(\tfrac{2}{n-1} (\mathrm{tr}_g \pi) g_{ij} -2 \pi_{ij} \right) f\right)_{;k}-\left(\left(\tfrac{2}{n-1} (\mathrm{tr}_g \pi) g_{ki} - 2\pi_{ki} \right) f\right)_{;j}\\
	&\quad +\left(\left(\tfrac{2}{n-1} (\mathrm{tr}_g \pi) g_{jk} - 2\pi_{jk} \right) f\right)_{;i},
\end{align*}
where the indices are raised and lowered by $g$. 
 \end{lemma}
\begin{proof}
By taking the trace of the first component of $(D\overline{\Phi}_{(g,\pi)})^*(f, X)=(h,w)$, we obtain the Laplace equation for $f$. Using that equation to eliminate  the Laplace term in the first component of $(D\overline{\Phi}_{(g,\pi)} )^*(f, X)=(h,w)$ gives the Hessian equation for $f$.

By commuting the order of derivatives and the Ricci formula, 
\begin{align*}
	& (L_Xg)_{ij;k} +  (L_X g)_{ki;j}- (L_Xg)_{jk;i}  \\
 	&=( X_{i;jk}+ X_{i;kj} )+ (X_{j;ik}-X_{j;ki}) + (X_{k;ij}-X_{k;ji})\\
	 &= 2X_{i;jk}+( R^{\ell}_{kji} + R^\ell_{ikj} + R^\ell_{ijk})X_{\ell}
\end{align*}
where the sign convention for the Riemannian curvature tensor is so that the Ricci tensor $R_{jk} = R^{\ell}_{\ell jk}$. Together with the equations for $L_Xg$ from $(D\overline{\Phi}_{(g,\pi)} )^*(f, X)=(h,w)$, it implies the Hessian equation of $X$.  

\end{proof}

The following proposition is a consequence of elliptic regularity.
\begin{proposition}\label{proposition:regularity}
Let $(M, g, \pi)$ be an initial data set with $(g-g_{\mathbb{E}},\pi) \in C^{2}_{-q}\times C^{1}_{-1-q}$. Let $a> 1$ and $\delta\in (0, q]$. Suppose $(f, X)\in L^a_{-\delta}$  and $(h, w)\in C^{0}_{-2-q}\times C^{1}_{-1-q}$ so that $(D\overline{\Phi}_{(g,\pi)})^* (f, X) = (h, w)$ weakly.  Then $(f,X)\in C^{2 }_{-q}$. Furthermore, if $(g-g_{\mathbb{E}},\pi) \in C^{2, \alpha}_{-q}\times C^{1, \alpha}_{-1-q}$, then $(f, X)\in C^{2,\alpha}_{-q}$. 
\end{proposition}
\begin{proof}
Fix an atlas on $M$ that consists of finitely may compact charts and the chart at infinity. If we express the equation $(D\overline{\Phi}_{(g,\pi)})^*(f,X) = (h,w)$ in a coordinate chart, there are coefficient matrices $A, B$ so that 
\begin{align} \label{equation:hessian}
	(\nabla^2 f, \nabla^2 X) + A (\nabla f, \nabla X) + B (f, X) \in C^0_{-2-q}.
\end{align}
By the explicit expressions for $A, B$ in Lemma \ref{lemma:adjoint-equations} and the hypothesis $(g, \pi) \in C^2_{-q}\times C^1_{-1-q}$, the entries of $A$ and $B$ are in $C^1_{-1-q}$ and $C^0_{-2-q}$ respectively. Taking the (Euclidean) trace of \eqref{equation:hessian} and applying Corollary~\ref{corollary:Lp-estimate} to $\Delta_0 f$ and to each component of $\Delta_0 X$ gives $(f, X) \in W^{2,a}_{-\delta}$. 

Next, we use a bootstrap argument  to show that $(f, X)\in W^{2,s}_{-\delta}$ for some $s>n$ and thus $(f, X)\in C^{1,1-\frac{n}{s}}_{-\delta}$  by the weighted Sobolev inequality.  We prove only the case that $a<n/2$, as the other case $a\in [n/2, n]$ can be treated similarly.  By the weighted Sobolev inequality, the initial regularity $(f, X)\in W^{2,a}_{-\delta}$ implies that
\[
	(\nabla f, \nabla X) \in L^{\frac{na}{n-a}}_{-1-\delta} \quad \mbox{and} \quad (f, X)\in L^{\frac{na}{n-2a} }_{-\delta}.
\]
Substituting those into the trace of \eqref{equation:hessian} implies that $(\Delta_0 f, \Delta_0 X)\in L^{\frac{na}{n-a}}_{-2-\delta}$. Applying Corollary~\ref{corollary:Lp-estimate}  gives $(f, X) \in W^{2,\frac{na}{n-a}}_{-\delta}$.  We repeat the argument using the improved regularity each time. After finitely many steps, we can conclude that $(f, X) \in W^{2,s}_{-\delta}$ for some $s>n$.

So far, we have only used the trace of \eqref{equation:hessian}. To show the $C^2$-regularity and to improve the fall-off rate, we employ the full Hessian equation \eqref{equation:hessian}. Using the fall-off rates of the coefficients $A, B$, we obtain $(\nabla^2 f, \nabla^2 X)\in C^0_{-2-q}$ and hence $(f, X) \in C^2_{-q}$. 

If $(g-g_{\mathbb{E}},\pi) \in C^{2, \alpha}_{-q}\times C^{1, \alpha}_{-1-q}$, then the coefficients matrices $A\in C^{1,\alpha}_{-1-q}$ and $B\in C^{0,\alpha}_{-2-q}$ and hence $(f, X) \in C^{2,\alpha}_{-q}$ by standard elliptic regularity.

\end{proof}

\section{Proof of Lemma~\ref{lemma:surjective}}\label{section:surjective}

We now prove the surjectivity lemma. In contrast with the proof of~\cite[Proposition 3.1]{Corvino-Schoen:2006}, ours  uses an elementary ODE argument in place of their unique continuation argument for elliptic systems.

\begin{surjective lemma}
Let $(M, g, \pi)$ be an initial data set with $(g- g_\mathbb{E}, \pi) \in C^{2}_{-q} \times C^{1}_{-1-q}$. The modified constraint map $\overline{\Phi}_{(g,\pi)}:  \mathscr{M}^{2,p}_{-q}\times W^{1,p}_{-1-q}\too L^p_{-2-q}$ is smooth, and $D\overline{\Phi}_{(g,\pi)}: W^{2,p}_{-q}\times W^{1,p}_{-1-q}\too L^p_{-2-q}$ is surjective.  
\end{surjective lemma}
\begin{proof}
Arguing as in \cite[Corollary 3.2]{Bartnik:2005}, we see that $\overline{\Phi}$ is smooth. Since $D\overline{\Phi}_{(g,\pi)}$ has closed range (see, for example,  \cite[p. 110]{Eichmair-Huang-Lee-Schoen:2016}), it suffices to show that the adjoint map $(D\overline{\Phi}_{(g,\pi)})^*$  has trivial kernel to obtain the desired surjectivity of $D\overline{\Phi}_{(g,\pi)}$. That is, if $(f, X) \in (L^p_{-2-q})^* = L^{\frac{p}{p-1}}_{-n+2+q}$ satisfies $(D\overline{\Phi}_{(g,\pi)})^*(f, X)=0$ weakly, then $(f, X)$ is identically zero. To simplify the notation, we denote $a=\frac{p}{p-1}$ and $\delta =n-2-q$. 

Let $(f, X) \in L^a_{-\delta}$ weakly solve the equation $(D\overline{\Phi}_{(g,\pi)})^*(f, X)=0$. 
Applying Proposition~\ref{proposition:regularity} with $(h, w)=0$, we have $(f,X)\in C^{2 }_{-q}$. We would like to show that that $(f,X)$ has infinite order decay, that is, $(f, X) \in C^2_{-N}$ for any $N>0$. 

Recall that Lemma~\ref{lemma:adjoint-equations}  implies  that $(f, X)$ weakly solves an over-determined elliptic system
\begin{align} \label{equation:hessian2}
	(\nabla^2 f, \nabla^2 X) = A (\nabla f, \nabla X) + B (f, X),
\end{align}
where the entries of $A, B$ are in  $C^1_{-1-q}, C^{0}_{-2-q}$, respectively.  
Substituting the fall-off rate of $(f, X)$ into the right hand side of \eqref{equation:hessian2} and using the fall-off rate of the coefficient matrices $A, B$, we obtain $(\nabla^2 f, \nabla^2 X)\in C^0_{-2-q-\delta}$ and hence $(f, X) \in C^2_{-q-\delta}$. Bootstrapping with an improved fall-off rate each time into \eqref{equation:hessian} ultimately yields decay as fast as we like.

 To finish the proof, we  restrict \eqref{equation:hessian} along a radial geodesic $\gamma(t)$ to obtain an ODE system for $Z(t) = (f(\gamma(t)), X(\gamma(t))$: 
\[
	Z''(t) = A(t) Z'(t) + B(t) Z(t)
\]		
with $|A(t)|+t|B(t)| \le Ct^{-1-q}$ and $|Z(t)|+ |Z'(t)|\le C_N t^{-N}$ for  all $t>1$ and for all $N$. An elementary ODE argument then shows that $Z$ is identically zero. See, for example,  \cite[Lemma B.3]{Huang-Martin-Miao:2018} (note the missing hypothesis $|Z'(t)|\le C_N t^{-N}$ in the statement of that lemma). By varying geodesics, we show that $(f, X)$ is identically zero on $M$.

\end{proof}

\section{The method of Lagrange multipliers}\label{section:Lagrange}

Our variational approach  relies on the Lagrange multiplier theorem for constrained minimization. The version presented here suits better a local extreme problem, as opposed to another standard version for critical points (\textit{e.g.} the one used by Bartnik in  \cite[Theorem 6.3]{Bartnik:2005}). 
The proof is simple and can be found in {\cite[Section 9.3]{Luenberger:1969}}. Since it is an important ingredient of the main result, we include the proof for completeness.

\begin{theorem}\label{theorem:Lagrange}
Let $X, Y$ be Banach spaces, and let $U$ be an open subset of $X$. Let $f: U\too \mathbb{R}$ and $h:U\too Y$ be  $C^1$. Suppose  $f$ has  a local extreme (minimum or maximum) at $x_0\in U$ subject to the constraint $h(x)=0$,  and suppose $Dh(x_0)$ is surjective. Then
\begin{enumerate}
\item $Df(x_0) (v) = 0$ for all $v\in \mathrm{ker}(Dh(x_0))$. \label{equation:kernel}\label{item:annihilator}
\item There is $\lambda\in Y^*$ such that $Df(x_0)  = \lambda (Dh(x_0))$, i.e. for all $v\in X$, 
\[
	Df(x_0) (v) =  \lambda (Dh(x_0) (v)).
\]
\end{enumerate}
\end{theorem}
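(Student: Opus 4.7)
The plan is the classical two-step Lagrange scheme: first show that $Df(x_0)$ annihilates $\ker(Dh(x_0))$, then use that vanishing together with the open mapping theorem to produce $\lambda$. For (1), given any $v\in\ker(Dh(x_0))$, I would construct a $C^1$ curve in the constraint set $\{h=0\}$ with initial velocity $v$ and invoke extremality of $f$ along that curve. For (2), the functional $\lambda$ is essentially forced once (1) is known: $Df(x_0)$ drops to a bounded functional on the quotient $X/\ker(Dh(x_0))$, which the open mapping theorem identifies with $Y$ through $Dh(x_0)$.

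To produce the curve in (1), I would fix $v\in\ker(Dh(x_0))$ and choose a closed complement $Z$ of $\ker(Dh(x_0))$ inside $X$, so that $Dh(x_0)|_Z\colon Z\to Y$ is a continuous linear bijection, hence a topological isomorphism by the open mapping theorem. Define $F\colon \mathbb{R}\times Z\to Y$ by $F(t,z)=h(x_0+tv+z)$. Then $F(0,0)=0$ and $D_zF(0,0)=Dh(x_0)|_Z$ is invertible, so the Banach space implicit function theorem yields a $C^1$ function $z(t)$ with $z(0)=0$ and $F(t,z(t))\equiv 0$ on a neighborhood of $t=0$. Differentiating at $t=0$ and using $v\in\ker(Dh(x_0))$ forces $z'(0)=0$, so $\gamma(t):=x_0+tv+z(t)$ lies in $\{h=0\}$ and satisfies $\gamma'(0)=v$. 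Since $f\circ\gamma$ has a local extreme at $t=0$, the chain rule gives $Df(x_0)(v)=(f\circ\gamma)'(0)=0$.

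For (2), part (1) says $Df(x_0)\in X^\ast$ vanishes on $\ker(Dh(x_0))$, so it descends to a bounded linear functional $\overline{Df(x_0)}$ on the quotient $X/\ker(Dh(x_0))$. The surjection $Dh(x_0)$ induces a continuous linear bijection $\overline{Dh(x_0)}\colon X/\ker(Dh(x_0))\to Y$, which the open mapping theorem upgrades to a topological isomorphism of Banach spaces. Setting $\lambda:=\overline{Df(x_0)}\circ\overline{Dh(x_0)}^{-1}\in Y^\ast$ then yields $\lambda\circ Dh(x_0)=Df(x_0)$ by construction. The step I expect to be the main obstacle is the choice of closed complement $Z$ in (1): not every closed subspace of a Banach space admits one, though the condition is automatic in Hilbert spaces, under finite (co)dimension, or in the concrete settings encountered in the paper. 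In full generality one should invoke Lyusternik's tangent space theorem, which produces the path using only the open mapping theorem and a Newton-type iteration, bypassing the explicit complement.
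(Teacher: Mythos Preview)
Your argument is correct in spirit but takes a different route from the paper, and the difference is worth noting because the paper's approach sidesteps exactly the obstacle you flag.

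For part~(1), the paper does not build a curve in the constraint set at all. Instead it argues by contradiction: if $Df(x_0)(v)\ne 0$ for some $v\in\ker(Dh(x_0))$, then the map $T(x)=(f(x),h(x))$ has $DT(x_0)$ surjective onto $\mathbb{R}\times Y$, and the Local Surjectivity Theorem (a consequence of the open mapping theorem, no complementation needed) produces $x$ near $x_0$ with $h(x)=0$ and $f(x)<f(x_0)$, contradicting minimality. This completely avoids the closed-complement issue you correctly identified; your implicit function theorem argument, as written, requires that $\ker(Dh(x_0))$ be complemented, which fails in general Banach spaces. Your suggested fallback to Lyusternik's theorem would work, but the paper's one-line surjectivity argument is cleaner.

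For part~(2), your quotient construction via the open mapping theorem is equivalent to what the paper does: the paper phrases it dually, using the closed range theorem in the form $(\ker Dh(x_0))^\perp=\mathrm{range}\bigl((Dh(x_0))^*\bigr)$ to write $Df(x_0)=(Dh(x_0))^*(\lambda)$ for some $\lambda\in Y^*$. Your formula $\lambda=\overline{Df(x_0)}\circ\overline{Dh(x_0)}^{-1}$ is just the explicit inverse of that statement, so the two arguments are interchangeable here.
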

\begin{proof}
We may without loss of generality assume that $f(x_0)$ is a local minimum subject to the constraint $h(x)=0$. Define a  $C^1$ map $T:U\too \mathbb{R}\times Y$ by 
\[
	T(x) = (f(x), h(x)). 
\]
We prove the first claim. Suppose on the contrary that there is $v\in \mathrm{ker}(Dh(x_0))$ so that $Df(x_0)(v)\neq 0$. It implies $DT(x_0) = (Df(x_0), Dh(x_0))$ is surjective because $Dh(x_0)$ is surjective.  By the Local Surjectivity Theorem~(\cite[Theorem 1, Section 9.2]{Luenberger:1969}), for any $\epsilon>0$, there exists $x\in U$ and  $\delta>0$ such that  $|x-x_0| <\epsilon$ and $T(x) = (f(x)-\delta, 0)$. This contradicts the assumption that $x_0$ is a local minimum of $f(x)$ subject to the constraint $h(x)=0$. 

The first claim says that $Df(x_0)$, as an element in the dual space $X^*$, lies in the annihilator subspace $(\mathrm{ker} Dh(x_0))^\perp$  of the dual space $X^*$ with respect to the natural pairing of $X$ and $X^*$. Because $Dh(x_0)$ has closed range, we have $(\mathrm{ker} Dh(x_0))^\perp = \mathrm{range} ((Dh(x_0))^*)$ (see \cite[Theorem 2, Section 6.6]{Luenberger:1969} for this fact). It implies there is $\lambda \in Y^*$ so that 
\[
	Df(x_0) = (Dh(x_0) )^*(\lambda). 
\]
By the definition of adjoint operators, for all $v\in X$, 
\[
	Df(x_0)(v) = (Dh(x_0) )^*(\lambda) (v) = \lambda (Dh(x_0)(v)).
\]
\end{proof}

\bibliographystyle{amsplain}
\bibliography{2019}

\end{document}